    \newcolumntype{P}[1]{>{\centering\arraybackslash}p{#1}}
    \newcolumntype{M}[1]{>{\centering\arraybackslash}m{#1}}
\title{{\bf Tropical KP Theory on Banana Curves}}
\author{Simonetta Abenda, T\"urk\"u \"Ozl\"um \c{C}elik, Claudia Fevola, Yelena Mandelshtam}
\date{}
\begin{document}

\maketitle

\begin{abstract}

The Kadomtsev–Petviashvili (KP) equation is the cornerstone of integrable systems, whose solutions reflect deep connections in algebraic geometry. Banana curves are  reducible rational curves obtained as a degeneration of hyperelliptic curves. In this work, we relate the family of KP multi-solitons arising from banana curves together with non-special divisors of fixed degree to the combinatorics of the tropical theta divisor of the curve. We describe the Voronoi and Delaunay polytopes and show that the latter are combinatorially equivalent to uniform matroid polytopes. As a consequence, the combinatorics of the tropical theta divisor canonically encodes the matroid and Grassmannian structures underlying the associated KP multi-soliton solutions.
We define the Hirota variety of a banana graph, which parametrizes all tau functions arising from such a graph. Starting from the matroid arising from Delaunay polytopes and the periods in the tropical limit, we construct an explicit parametrization of this variety which realizes the tau function as a multi-soliton. Our framework specializes naturally to real and positive settings.
 
\end{abstract}

\section{Introduction}

The Kadomtsev–Petviashvili (KP) equation is a nonlinear partial differential equation
\begin{equation}\label{eq:KPequation}
(-4u_t + 6uu_x + u_{xxx})_x \,=\, -3u_{yy},
\end{equation}
for an unknown function $u(x,y,t)$ of two spatial variables $x,y$ and one time variable $t$, where subscripts denote partial derivatives. When the variables $x,y,t$ are real, the form of~\eqref{eq:KPequation} is the KP II equation. In the rest of the paper, for simplicity, we will refer to~\eqref{eq:KPequation} as the KP equation both in the real and complex settings.  
This prominent integrable equation models the propagation of long, weakly two-dimensional shallow water waves with slow variation along the transverse $y$-direction. The KP equation is among the most fundamental examples in the theory of integrable systems, encompassing the Korteweg de Vries, Boussinesq, and Toda equations as reductions or particular limits. It serves as an example of an integrable hierarchy whose solutions possess dual geometric realizations through algebraic curves on one side and Grassmannians on the other. Understanding how these two realizations are related has been a central problem in the modern theory of integrable systems.

Two distinguished families of solutions are the finite-gap and the multi-soliton solutions.  
Finite-gap solutions are parametrized by algebraic curves, namely the spectral curves of the Lax operator, and are expressed in terms of Riemann theta functions.  
KP~solitons admit a representation via Wronskian determinants and are parameterized by points in finite-dimensional Grassmannians. Real and regular solitons correspond to totally nonnegative Grassmannians ${\rm Gr}_{\geq 0}(k,n)$ \cite{CK, KW2013,KW2014}, while real and regular finite-gap solutions are parameterized by divisors on M-curves fulfilling  Dubrovin--Natanzon conditions \cite{DubNat}.

The relation between these two classes has been deeply studied in recent years. Abenda and Grinevich \cite{Abenda2018,Abenda2019,Abenda2022, AbendaGrinevich2025DN} established that every real regular KP soliton arises as the limit of a real regular finite-gap solution, providing an explicit characterization of the associated spectral data.  
In particular, in \cite{Abenda2022}, they showed that each plabic graph representing a positroid cell is dual to a rational degeneration of an M-curve, and that each soliton datum associated with the cell corresponds uniquely to a divisor on the curve satisfying the Dubrovin–Natanzon reality and regularity conditions.

An alternative viewpoint on this correspondence comes from tropical geometry. In the tropical limit, the Riemann theta function becomes a finite sum of exponentials supported on the vertices of a polytope, called Delaunay polytope, as introduced in~\cite{AgoCelStrStu,AgoFevManStu}. In this framework, soliton solutions appear as rational degenerations of finite-gap solutions, with their combinatorial structure encoded by tropical curves and their Jacobians. The Hirota variety parametrizes the $\tau$-functions arising from a degenerate Riemann theta function that satisfy the Hirota bilinear relation \cite{AgoFevManStu, FevMan}.  Degeneration of Riemann theta functions have been further explored also in e.g.,~\cite{ichikawa2023periods,ICHIKAWA2024114748}. 

In this paper, we explore this correspondence for tropical degenerations associated with banana graphs, i.e.,  metric graphs with two vertices connected by $g+1$ edges, which mainly represent the tropical limits of hyperelliptic curves of genus $g$. We describe their tropical Jacobians and Voronoi--Delaunay decomposition, and establish their connections with matroid base polytopes, such as hypersimplices. 
A key aspect of our construction is the realization of the tropical Jacobian of the banana graph in a higher-dimensional space, where it appears as a projection of the $(g+1)$-dimensional hypercube. This representation plays a central role in relating the combinatorial structure of the tropical Jacobian to Grassmannians and soliton~$\tau$-functions. Our results thus provide a concrete geometric realization of the correspondence between tropical degenerations of algebraic curves and KP solitons, making explicit how tropical Jacobians and combinatorial polytopes capture the algebraic structure of integrable hierarchies. 

The present paper is the first step in a broader project aimed at understanding KP solitons arising from arbitrary metric graphs. We treat the case of banana graphs in full detail not only because it is already rich enough to exhibit several key phenomena, but also because it provides an informative model illustrating the methods that will be used in greater generality. This paper should be viewed as a detailed case study that foreshadows a much more general theory. Its full development is part of ongoing joint work by the authors.

The main results of this paper describe the combinatorial and geometric structure underlying KP multi-solitons arising from tropical degenerations associated with banana graphs. We give an explicit description of the Voronoi polytope of the tropical Jacobian of a banana graph, including its set of vertices and $f$-vector (Theorem~\ref{thm:vertices_voronoi_Rg}), and show that the associated Delaunay polytopes are combinatorially equivalent to hypersimplices, hence to matroid base polytopes of uniform matroids (Theorem~\ref{thm:BTD} and ~\Cref{cor:Delaunay_in_Rg}). We further identify the face poset of the Voronoi polytope with the poset of strongly connected orientations of the graph (see~\cite[Theorem~1]{amini2010lattice}), and provide an explicit description of this correspondence in the case of banana graphs (\Cref{thm:bijection}). As a consequence, each Voronoi vertex, together with a choice of graph vertex, canonically determines a matroid whose bases are in bijection with the corresponding Delaunay set, thereby linking the tropical Jacobian to the Grassmannian framework underlying KP solitons (Theorem~\ref{thm:delsetismatroid}).

Building on this combinatorial description, we introduce the Hirota variety of a graph, which parametrizes $\tau$-functions arising from tropical limits of Riemann theta functions satisfying the Hirota bilinear relations. For banana graphs, we describe the main component of this variety explicitly: Theorem~\ref{theo:Hirota_parametrization} gives a parametrization in terms of tropical data that realizes all such $\tau$-functions as KP multi-solitons. In this framework, the matroid underlying a KP soliton coincides with the matroid determined by the associated Voronoi vertex and graph orientation (\Cref{prop:matroid_A}). We further show that this parametrization agrees with previously known descriptions \cite{Abenda2017,nakdegeneration} of KP multi-solitons arising from rational degenerations of hyperelliptic curves (Theorems~\ref{theo:Krichever1_Hirota} and \ref{the:paramHirotagenericD}). Finally we define the real and positive Hirota variety, relating it to totally nonnegative Grassmannians and MM-curves as in \cite{Abenda2017,kummer2024maximal}. 
Equivalently, it parametrizes real $\tau$–functions of a graph satisfying Hirota bilinear relation with positivity constraints on their coefficients. This makes the positive Hirota variety a natural object from the viewpoint of positive geometry \cite{Lam2024, Lam2025, ranestad2025positive}, providing a tropical realization of the structures underlying real and regular KP solitons. 

The paper is organized as follows. In~\Cref{sec2}, we review the background supporting the connection between KP multi-solitons and tropical degenerations of algebraic curves, including tropical curves, their Jacobians, and the relevant graph homology theory. ~\Cref{sec3} is devoted to the Voronoi–Delaunay combinatorics of banana graphs and its interpretation in terms of strongly connected orientations and matroids. In \Cref{sec:theta_functions_limit}, we recall KP multi-solitons and compute periods of Abelian differentials in the tropical limit, which then are used to express the solitons. ~\Cref{sec:Hirota_of_graph} introduces the Hirota variety of a banana graph and gives explicit parametrizations of its main component, including real and positive loci. Finally, in~\Cref{sec:Hirota_of_hypers}, we define the Hirota variety associated with the Delaunay polytopes of the banana graph and study its defining polynomial equations.

\section{Geometric and combinatorial foundations}\label{sec2}

In this section, we assemble the necessary background from tropical geometry, combinatorial and homological models of degenerating curves. We review the formalisms that will later support our connection between KP multi-solitons and tropical degenerations of algebraic curves. We begin with the tropical perspective on moduli spaces of curves and abelian varieties, where tropical curves are modeled by weighted metric graphs, and their Jacobians are realized as real tori built from the combinatorial and metric structure of the graph. 
 We then discuss how this picture extends naturally to a homological setting, where graphs are treated as chain complexes, and Jacobians arise from the space of cycles modulo boundaries. In this setting, degenerations and compactifications are governed by the associated lattice and fan structures, as studied in tropical geometry. Our main references are \cite{brannetti2011tropical,chan2012combinatorics, chua2019schottky, oda1979compactifications}.

\subsection{Tropical curves and their Jacobians}\label{sec:tropicalJacobians}
A \textit{metric graph} is a pair $(\Gamma,l)$, where $\Gamma=(V,E)$ is a finite connected graph consisting of a finite set of vertices $V=\{v_1,\dots,v_r\}$ and edges $E=\{e_1,\dots,e_m\}$, loops and parallel edges allowed, and $l$ is a function $l:E\rightarrow \RR_{>0}$ on the edges of $\Gamma$ that records lengths of the edges of $G$. The \emph{genus} of a graph $\Gamma$ is the rank of its first homology group $g \coloneqq g(\Gamma) = |E|-|V|+1.$ A \textit{tropical curve} is usually defined as a triple given by the metric graph and an $\NN$-valued function which assigns non-negative integer weights to the vertices of $\Gamma$ with the property that every weight-zero vertex has degree at least $3$ \cite{chan2012combinatorics}. In this article, we are interested in tropical curves that arise as rational degenerations of smooth complex algebraic curves, therefore, we consider tropical curves having weight zero at each vertex. In such cases, the arithmetic genus of the curve coincides with the genus of its metric graph.

Given a tropical curve $(\Gamma,l)$, one can determine its \textit{tropical Riemann matrix}. Consider the first integral homology group $H_1(\Gamma,\ZZ)\simeq \ZZ^g$ of the graph $\Gamma$. Let $B$ denote the $g\times m$ matrix whose columns record the coefficients of each edge in the basis vectors, and let $\Delta$ denote the $m\times m$ diagonal matrix whose entries record the edge lengths. The Riemann matrix of $\Gamma$ is the positive definite $g\times g$ matrix 
\begin{equation}\label{eq:tropical_Riemann_matrix}
    Q \, =\, B  \cdot  \Delta \cdot B^T \,=\, l(e_1)\bb_1\bb_1^T+l(e_2)\bb_2\bb_2^T+\cdots+l(e_m)\bb_m \bb_m^T,
\end{equation}
where $\bb_1,\bb_2,\dots,\bb_m$ are the column vectors of the matrix $B$. In particular, it defines the norm on $\mathbb{R}^g$ as $Q(x)=||B^T \cdot x||_l$ where $||y||_l=\sum^m_{i=1}l(e_i)y_i^2$. The \emph{tropical Jacobian} of $\Gamma$ is the torus $\RR^g/\ZZ^g$ together with the matrix $Q$. A basis of $H_1(\Gamma,\ZZ)$ can be determined by fixing an arbitrary orientation of the edges of $\Gamma$ and a spanning tree, this procedure is described in detail in \cite[Section~4]{bolognese2017curves} together with an example for a graph of genus $4$. A different basis choice gives another matrix related by a $\text{GL}_g(\ZZ)$-action by conjugation.
Given the expression in \eqref{eq:tropical_Riemann_matrix}, the cone of all matrices that are Riemann matrices of the graph $\Gamma$, allowing all edge lengths to vary, is the rational open polyhedral cone 
\[\sigma_{\Gamma}\, = \, \RR_{>0}\{\bb_1\bb_1^T,\bb_2\bb_2^T,\dots,\bb_m\bb_m^T\}.\]
Let $\tilde{S}^g_{\geq 0}$ be the set of $g\times g$ symmetric positive semidefinite matrices with rational nullspace, namely with kernels having bases defined over $\QQ$. As for tropical Riemann matrices, the group $\text{GL}_g(\ZZ)$ acts on $\tilde{S}^g_{\geq 0}$ by conjugation. Given a fixed tropical Riemann matrix $Q\in \tilde{S}^g_{\geq 0}$, the values of its quadratic form $\cc \mapsto \cc^TQ\cc$ in $\RR$ with $\cc \in \ZZ^g$ define a regular polyhedral subdivision of $\RR^g$ with vertices at $\ZZ^g$. This is known as the \textit{Delaunay subdivision} of $Q$ and denoted $\text{Del}(Q)$. The dual subdivision to $\text{Del}(Q)$ is known as the \textit{Voronoi decomposition} of $\RR^g$. The \emph{tropical theta divisor} is given by the faces of codimension one in the Voronoi decomposition. The cells of the Voronoi decomposition of $Q$ are the lattice translates of the \textit{Voronoi polytope}
\begin{equation}\label{eq:Voronoi_polytope}
 V_{Q}\,\coloneqq \,\{\aa\in\RR^g \,\, :\,\, \aa^TQ \aa \leq (\aa-\cc)^TQ (\aa-\cc) \,\,\text{for all } \cc\in \ZZ^g\}.
\end{equation}
Note that $V_Q$ consists of all points in $\RR^g$ such that the origin is the closest lattice point, in the norm induced by $Q$. We denote by $\mathcal{V}_Q$ the set of its vertices. The $f$-vectors of the Voronoi polytopes of graphs in genus $4$ are given in~\cite[Table~1]{chua2019schottky}. The following result was proven in \cite[Proposition 3.1]{chua2019schottky} building on \cite{vallentin2003sphere}. 
See \cite[Section 4.4]{brannetti2011tropical} for more details.

\begin{proposition}
    Let $(\Gamma,l)$ be a metric graph, homology basis represented by the matrix $B$, and Riemann matrix $Q = B\Delta B^T.$ The Voronoi polytope~\eqref{eq:Voronoi_polytope} is affinely isomorphic to the zonotope $\sum_{i=1}^m[-\bb_i,\bb_i]$.
\end{proposition}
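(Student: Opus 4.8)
The plan is to exhibit an explicit affine map carrying the zonotope onto the Voronoi polytope and to check that it sends the zonotope's defining half-space description to the one in~\eqref{eq:Voronoi_polytope}. First I would observe that $Q = B\Delta B^T$ gives $\RR^g$ the inner product $\langle x,y\rangle_Q = x^T Q y$, and that under the surjection $B\colon \RR^m \to \RR^g$ the quadratic form $\|\cdot\|_l$ on $\RR^m$ (weighted by edge lengths) induces exactly $Q$ on the quotient $\RR^m/\ker B$. Concretely, $\RR^g \cong \RR^m/\ker B$ as inner-product spaces, where $\RR^m$ carries $\|\cdot\|_l$. The unit cube $\prod_i[-1,1]$ in $\RR^m$ — or rather, after rescaling, the box $\prod_i[-\tfrac12,\tfrac12]$ — is the Voronoi cell of the lattice $\ZZ^m$ with respect to $\|\cdot\|_l$, since that lattice is orthogonal and the form is diagonal. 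The image of this cube under the projection $\RR^m \to \RR^m/\ker B$ is, up to the affine isomorphism, the zonotope $\sum_i[-\bb_i,\bb_i]$ (it is the Minkowski sum of the images of the edges of the cube).

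The key step is then a general lattice-projection fact: if $L \subset \RR^m$ is a lattice with an inner product, $W \subseteq \RR^m$ a rational subspace spanned by a sublattice $L \cap W$ that is saturated appropriately, and $\pi\colon \RR^m \to \RR^m/W$ the orthogonal projection, then $\pi$ carries the Voronoi cell of $L$ onto the Voronoi cell of $\pi(L)$ — provided $W$ is spanned by a subset that makes the projection of $L$ still a lattice with the expected covolume. In our situation $W = \ker B$, $L = \ZZ^m$, $\pi(L) = \ZZ^g$ (this is where one uses that $B$ represents a homology basis, so $\ZZ^m \to \ZZ^g$ is surjective and $\ker B \cap \ZZ^m$ is a primitive sublattice), and the induced form on the quotient is exactly $Q$. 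The containment $\pi(V_{\text{cube}}) \subseteq V_Q$ is immediate from the definitions, since projecting can only decrease distances to lattice points; the reverse containment requires showing that every point of $V_Q$ lifts to a point of the cube, which follows because $\pi$ restricted to $V_{\text{cube}}$ is already surjective onto the fundamental-domain-sized region $V_Q$ (a volume/covolume comparison, or a direct argument that the fiber $\pi^{-1}(a) \cap V_{\text{cube}}$ is nonempty for $a \in V_Q$). I would cite~\cite{vallentin2003sphere} and \cite[Proposition 3.1]{chua2019schottky} for the precise form of this projection statement rather than reprove it.

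Finally I would assemble the pieces: the zonotope $\sum_i[-\bb_i,\bb_i]$ equals $\pi(\text{cube})$ up to an affine isomorphism of $\RR^g$ (the one identifying the quotient inner-product space with $(\RR^g, Q)$), and $\pi(\text{cube}) = V_Q$ by the projection fact, so the zonotope is affinely isomorphic to $V_Q$.

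The main obstacle I anticipate is the lattice-theoretic bookkeeping in the reverse containment $V_Q \subseteq \pi(V_{\text{cube}})$: one must verify that orthogonal projection of the cube along $\ker B$ does not "overshoot," i.e. that the Voronoi cell of the projected lattice is genuinely the image and not a proper subset of it, which can fail for projections of general lattices and is exactly the content that makes $\ker B$'s being a \emph{coordinate-type} (graph-homological) subspace matter. Since this is precisely the content of the cited results, in the write-up I would lean on them and keep the argument to the identification of objects rather than re-deriving the projection theorem.
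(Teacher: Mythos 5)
A preliminary remark: the paper does not prove this proposition at all --- it is quoted from \cite[Proposition~3.1]{chua2019schottky} and \cite{vallentin2003sphere} --- so the comparison is really between your outline and the mechanism the paper relies on later (in \Cref{sec:OSbackground} and in the proof of \Cref{thm:vertices_voronoi_Rg}). Your plan of projecting the cube is the right mechanism, but your identification of the objects contains a genuine error. The quotient $(\RR^m,\|\cdot\|_l)/\ker B$ does \emph{not} carry the form $Q=B\Delta B^T$: the quotient norm of a class $[v]$ is $\min\{\|w\|_l:\,w\in v+\ker B\}$, and under the identification $[v]\mapsto Bv$ a Lagrange computation gives $c\mapsto c^T(B\Delta^{-1}B^T)^{-1}c$, which for $\Delta=I$ is $Q^{-1}$, not $Q$. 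Equivalently, the projected lattice $\pi(\ZZ^m)$ is the \emph{dual} lattice $(\ZZ^m\cap H_1(\Gamma,\RR))^{*}$, of index $\det Q$ (the number of spanning trees) over the lattice you actually need. The pair $(\ZZ^g,Q)$ is realized not as a quotient but as a sublattice: the paper's convention $Q(x)=\|B^Tx\|_l$ means that $B^T$ is an isometric embedding of $(\RR^g,Q)$ onto the cycle space $W=H_1(\Gamma,\RR)$, carrying $\ZZ^g$ onto $\ZZ^m\cap W$.

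This is not a cosmetic slip, because the projection theorem you want to cite concerns the intersection lattice, not the projected one: the orthogonal projection of $C^m$ onto $W$ equals the Voronoi cell of $\ZZ^m\cap W$ inside $W$ (this is exactly what the paper uses in the proof of \Cref{thm:vertices_voronoi_Rg}), while the Voronoi cell of $\pi(\ZZ^m)$ is a strictly smaller polytope whenever $g\ge 1$ (in genus $2$, $\pi'(C^3)$ has area $\sqrt3$, the covolume of $\ZZ^3\cap W$, whereas the Voronoi cell of $\pi'(\ZZ^3)$ has area $1/\sqrt3$). Your ``immediate'' inclusion via ``projection decreases distances'' in fact only controls lattice points lying in $W$ --- the identity $\|\pi(x)-\lambda\|^2-\|\pi(x)\|^2=\|x-\lambda\|^2-\|x\|^2$ requires $\lambda\in W$ --- so it proves $\pi(V_{\mathrm{cube}})\subseteq V_{\ZZ^m\cap W}$ and says nothing about the extra points of $\pi(\ZZ^m)$. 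As written, your argument would establish that $V_{Q^{-1}}$ is a zonotope, not $V_Q$. The repair is to drop the quotient picture: embed via $B^T$, apply the projection statement to the sublattice $\ZZ^m\cap W$, and observe that $\pi'(C^m)=\sum_i\bigl[-\tfrac12\pi'(\ee_i),\tfrac12\pi'(\ee_i)\bigr]$ with $\pi'(\ee_i)=B^TQ^{-1}\bb_i$ (for $\Delta=I$), so that pulling back by $(B^T)^{-1}$ yields $V_Q=\tfrac12\,Q^{-1}\bigl(\sum_{i=1}^m[-\bb_i,\bb_i]\bigr)$, which is the asserted affine isomorphism.
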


A zonotope is a polytope that can be realized as a Minkowski sum of segments, or equivalently, that can be obtained as an aﬃne projection of an hypercube, see \cite[Chapter~7]{ziegler2012lectures}.

We are interested in the combinatorial structure of polytopes that arise in the Delaunay subdivision of $\text{Q}$. Fixing $\aa$ to be a vertex of the Voronoi polytope $V_Q$ in~\eqref{eq:Voronoi_polytope}, we will refer to such vertices as \emph{Voronoi vertices}. The \textit{Delaunay set} associated with $\aa$ and $Q$ is given as
\begin{equation}\label{eq:Delaunay_set}
    \mathcal{D}_{\aa,Q}\, = \, \{\cc \in \ZZ^g \, : \, \aa^TQ\aa = (\aa-\cc)^TQ(\aa-\cc)\}.
\end{equation}
We denote $D_{\aa,Q}$ the Delaunay polytope obtained as the convex hull of the set of integer vectors in $\mathcal{D}_{\aa,Q}$. Note that different choices of Voronoi vertices can lead to different types of Delaunay polytopes, a list of such polytopes in genus~$3$ is given in~\cite[Section~4]{AgoCelStrStu}. 

Note that the natural
action of $\text{GL}_g(\ZZ)$ on $\Sg$ corresponds to an action on Voronoi and Delaunay polytopes, by left multiplication by a matrix $G^{-1}$. Given a matrix $\tilde{Q} = G^TQG$ where $Q\in \Sg$ and $G\in\text{GL}_g(\ZZ)$, then $\mathcal{V}_{\tilde{Q}}$ contains the vectors $G^{-1}\aa$ for $\aa\in\mathcal{V}_Q$. Analogously for the Delaunay sets, we have $\mathcal{D}_{G^{-1}\aa,\tilde{Q}}=\{G^{-1}\cc\, : \, \cc\in \mathcal{D}_{\aa,Q}\}$.

For simplicity, in the rest of the paper we assume the $m\times m$ matrix $\Delta$ to be the identity matrix. This assumption is not restrictive as we are interested only in the combinatorics of the Voronoi and Delaunay subdivisions, and this is invariant with respect to the choices of edge lengths \cite{brannetti2011tropical, chan2012combinatorics}.

\begin{proposition}\label{prop:preequiv}
Let $V_Q$ be the Voronoi polytope associated to a graph $\Gamma$, and let $\aa$ be a vertex of the $V_Q$. Then, for any $\cc_0\in \DaQ$, the following hold: 
\begin{enumerate}
    \item The point $\aa' = \aa-\cc_0$ is also a vertex of $V_Q$.
    \item The associated Delaunay set satisfies 
    ${\cal D}_{\aa',Q} = {\cal D}_{\aa,Q} -\cc_0 $.
    \item The point $-\cc_0$ belongs to $ \mathcal{D}_{\aa',Q}$.
    \item Moreover, if $\cc'_0\in \mathcal{D}_{\aa',Q}$ and $\aa'' = \aa' -\cc'_0$
    , then there exists $\cc\in \DaQ$ such that $\aa'' = \aa-\cc$.
\end{enumerate}
\end{proposition}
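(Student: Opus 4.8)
The plan is to reduce the whole statement to a single vector identity. Write $\|\mathbf{v}\|^2 \coloneqq \mathbf{v}^T Q\mathbf{v}$ for the squared $Q$-norm of $\mathbf{v}\in\RR^g$, so that membership $\aa\in V_Q$ reads $\|\aa\|^2\le\|\aa-\cc\|^2$ for all $\cc\in\ZZ^g$, and $\mathcal{D}_{\aa,Q}=\{\cc\in\ZZ^g:\|\aa\|^2=\|\aa-\cc\|^2\}$. Setting $\aa'=\aa-\cc_0$, the only computation we ever use is the regrouping
\[
\aa'-\cc \;=\; (\aa-\cc_0)-\cc \;=\; \aa-(\cc_0+\cc),\qquad \cc\in\ZZ^g,
\]
together with the elementary remarks that $0\in\mathcal{D}_{\aa,Q}$ for every $\aa$, and that $\cc_0\in\mathcal{D}_{\aa,Q}$ says exactly $\|\aa\|^2=\|\aa-\cc_0\|^2$.

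I would prove the membership half of $(1)$ and all of $(2)$ together. Taking $\cc=0$ in the regrouping gives $\|\aa'\|^2=\|\aa-\cc_0\|^2=\|\aa\|^2$, the last equality because $\cc_0\in\mathcal{D}_{\aa,Q}$. For arbitrary $\cc\in\ZZ^g$ it gives $\|\aa'-\cc\|^2=\|\aa-(\cc_0+\cc)\|^2\ge\|\aa\|^2=\|\aa'\|^2$, where the inequality is just the defining property of $\aa\in V_Q$ applied to the lattice vector $\cc_0+\cc$; hence $\aa'\in V_Q$. The same chain shows $\cc\in\mathcal{D}_{\aa',Q}$ iff $\|\aa'\|^2=\|\aa'-\cc\|^2$ iff $\|\aa\|^2=\|\aa-(\cc_0+\cc)\|^2$ iff $\cc_0+\cc\in\mathcal{D}_{\aa,Q}$, which is precisely the assertion $\mathcal{D}_{\aa',Q}=\mathcal{D}_{\aa,Q}-\cc_0$ of $(2)$.

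Claims $(3)$ and $(4)$ then drop out of $(2)$. Since $0\in\mathcal{D}_{\aa,Q}$, we get $-\cc_0=0-\cc_0\in\mathcal{D}_{\aa,Q}-\cc_0=\mathcal{D}_{\aa',Q}$, giving $(3)$. For $(4)$, any $\cc'_0\in\mathcal{D}_{\aa',Q}=\mathcal{D}_{\aa,Q}-\cc_0$ has the form $\cc'_0=\cc-\cc_0$ with $\cc\coloneqq\cc_0+\cc'_0\in\mathcal{D}_{\aa,Q}$, and then $\aa''=\aa'-\cc'_0=\aa-\cc_0-\cc'_0=\aa-\cc$, as claimed.

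The only step that is not pure algebra is upgrading ``$\aa'\in V_Q$'' to ``$\aa'$ is a vertex of $V_Q$'' in $(1)$, which I expect to be the main (and only real) obstacle. I would argue that the Voronoi decomposition of $\RR^g$ determined by $Q$ is a face-to-face polyhedral complex invariant under translation by $\ZZ^g$---this is exactly the statement recalled above that its cells are the lattice translates of $V_Q$---so its set of vertices is $\ZZ^g$-invariant. Since $\aa$ is a vertex of $V_Q$ it is a vertex of the decomposition, hence so is $\aa'=\aa-\cc_0$; being a vertex of the complex that also lies in the cell $V_Q$, it is a vertex of $V_Q$ by the face-to-face property. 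Equivalently, one can invoke Voronoi--Delaunay duality: $\aa$ is a Voronoi vertex iff its dual Delaunay polytope $D_{\aa,Q}$ is $g$-dimensional, and by $(2)$ the set $\mathcal{D}_{\aa',Q}$ is a lattice translate of $\mathcal{D}_{\aa,Q}$ and so spans an affine subspace of the same dimension.
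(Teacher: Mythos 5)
Your proof is correct and follows essentially the same route as the paper: items (2)--(4) are exactly the paper's argument, namely the regrouping $\aa'-\cc=\aa-(\cc_0+\cc)$ applied to the defining equality of $\mathcal{D}_{\aa,Q}$, with (3) the case $\cc'=\mathbf{0}$ and (4) an immediate consequence of (2). The only difference is item (1), which the paper dismisses with ``by construction''; your two justifications --- $\ZZ^g$-invariance of the face-to-face Voronoi complex, or duality via the fact that $\mathcal{D}_{\aa',Q}$ is a translate of $\mathcal{D}_{\aa,Q}$ and hence still spans a $g$-dimensional affine subspace --- are both valid and make explicit what the paper leaves implicit.
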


\begin{proof}
By construction, $\aa'= \aa-\cc_0$ is a vertex of the Voronoi decomposition associated with $V_Q$, proving Item $1$. We now show Item $2$. By definition, a point $\cc' \in \mathcal{D}_{\aa',Q}$ if and only if it satisfies the equality in the definition in \eqref{eq:Delaunay_set}. Substituting $\aa' = \aa-\cc_0,$ we obtain
\begin{equation*}
    0 = (\aa-\cc_0)^T Q (\aa-\cc_0) - (\aa -\cc_0 -\cc')^T Q (\aa -\cc_0 -\cc')
     = \aa^T Q \aa -(\aa -(\cc_0 +\cc'))^T Q (\aa -(\cc_0 +\cc')).
\end{equation*}

Vice versa, for any $\cc \in \DaQ$, we have $\cc-\cc_0 \in \mathcal{D}_{\aa',Q}$. In fact,
\[
(\aa')^T Q\aa' - (\aa'-\cc +\cc_0)^T Q(\aa'-\cc+\cc_0) = \aa^T Q \aa -(\aa-\cc)^T Q (\aa- \cc) =0
\]
Item $3$ is the special case $\cc'=\mathbf{0}$, so that $\cc=\cc_0$, that is $\aa'$ is a vertex of $V_Q$ and $-\cc_0 \in \mathcal{D}_{\aa',Q}$. Finally, Item $4$ is an immediate consequence of Item $2$. We have $\aa''=(\aa-\cc_0)-(\cc-\cc'_0)=\aa-\cc$, for some $\cc\in\DaQ$. This concludes the proof.
\end{proof}

Motivated by \Cref{prop:preequiv}, we introduce an equivalence relation on the set of vertices of the Voronoi polytope. In fact, \Cref{prop:preequiv} shows that translating a Voronoi vertex $\aa$ by any element of $\DaQ$ produces another Voronoi vertex, and that the corresponding Delaunay polytopes transform compatibly. This suggests grouping those vertices related by such admissible translations
\begin{definition}\label{eq:equivalence_Voronoi_vertices}
    Let $\aa, \aa'$ be vertices of a Voronoi polytope associated with a graph $\Gamma$, we say the $\aa$ and $\aa'$ are \emph{equivalent (under Delaunay-translation)}, and write
\begin{equation*}
\aa\, \sim\, \aa', \textit{ if there exists } \cc_0 \,\in \,\DaQ \textit{ such that } \aa'\,=\,\aa-\cc_0. 
\end{equation*}
We denote by $[\aa]$ the equivalence class of a vertex $\aa\in \mathcal{V}_Q$ with respect to the Delaunay-translation equivalence.
\end{definition}

The following characterization holds for any class of Voronoi and Delaunay polytopes associated with a graph $\Gamma$ as defined in~\Cref{sec2} and  ensures that the Delaunay-translation equivalence is a well-defined equivalence relation on $\mathcal{V}_Q.$ 

\begin{proposition}\label{prop:equivvertex_general}
 Let $V_Q$ be the Voronoi polytope associated to a graph $\Gamma$, and let $\aa,\aa' \in {\cal V}_Q$. Then, $\aa\sim\aa'$ if and only if $\mathcal{D}_{\aa',Q} = \mathcal{D}_{\aa,Q} -\cc_0 $, where $\cc_0\in \mathcal{D}_{a,Q}$ satisfies $\aa'=\aa-\cc_0.$  
\end{proposition}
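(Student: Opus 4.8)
The plan is to prove the two implications separately; each unwinds a part of Proposition~\ref{prop:preequiv} against Definition~\ref{eq:equivalence_Voronoi_vertices}, so the argument is short. First I would note that, since $\aa$ and $\aa'$ are both fixed vertices of $V_Q$, the vector $\cc_0 = \aa-\aa'$ is uniquely determined by the pair $(\aa,\aa')$; this is what makes the phrase ``the $\cc_0$ such that $\aa'=\aa-\cc_0$'' unambiguous, and it is the $\cc_0$ I use throughout.

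For the forward implication, I would assume $\aa\sim\aa'$. By Definition~\ref{eq:equivalence_Voronoi_vertices} there is some $\cc_0\in\DaQ$ with $\aa'=\aa-\cc_0$; by the uniqueness just observed this is exactly $\cc_0=\aa-\aa'$, and in particular it is a lattice vector lying in $\DaQ$. Applying Item~$2$ of Proposition~\ref{prop:preequiv} to this $\cc_0$ yields $\mathcal{D}_{\aa',Q}=\mathcal{D}_{\aa,Q}-\cc_0$, which is precisely the claimed equality. For the reverse implication, I would assume $\mathcal{D}_{\aa',Q}=\mathcal{D}_{\aa,Q}-\cc_0$ with $\cc_0\in\DaQ$ satisfying $\aa'=\aa-\cc_0$; the last two conditions are verbatim the defining requirement of $\aa\sim\aa'$ in Definition~\ref{eq:equivalence_Voronoi_vertices}, so $\aa\sim\aa'$, completing the biconditional.

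I do not expect any genuine obstacle here: once Proposition~\ref{prop:preequiv} is available, both directions are essentially a restatement of definitions, and the only subtlety worth spelling out is the uniqueness of $\cc_0=\aa-\aa'$ (note that two Voronoi vertices need not differ by a lattice vector at all, in which case neither side of the equivalence can hold). If, in addition, I wanted to record the claim made in the surrounding text that $\sim$ is a genuine equivalence relation on $\mathcal{V}_Q$, I would check reflexivity by taking $\cc_0=\mathbf{0}$, which always lies in $\DaQ$ by~\eqref{eq:Delaunay_set}; symmetry from Item~$3$ of Proposition~\ref{prop:preequiv}, since if $\aa'=\aa-\cc_0$ with $\cc_0\in\DaQ$ then $-\cc_0\in\mathcal{D}_{\aa',Q}$ and $\aa=\aa'-(-\cc_0)$; and transitivity directly from Item~$4$ of Proposition~\ref{prop:preequiv}.
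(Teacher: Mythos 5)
Your forward direction matches the paper's: both reduce to Item~2 of Proposition~\ref{prop:preequiv} applied to the $\cc_0\in\DaQ$ furnished by Definition~\ref{eq:equivalence_Voronoi_vertices}, and your remark that $\cc_0=\aa-\aa'$ is uniquely determined is a fair clarification. The divergence is in the reverse implication, and there I think you have read the statement in a way that empties it of content. You take the hypothesis to be ``$\mathcal{D}_{\aa',Q}=\mathcal{D}_{\aa,Q}-\cc_0$ \emph{and} $\cc_0\in\DaQ$ \emph{and} $\aa'=\aa-\cc_0$,'' at which point the last two conjuncts are verbatim the definition of $\aa\sim\aa'$ and the set equality plays no role at all. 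If that were the intended reading, the proposition would assert nothing in the reverse direction, which is not how the authors use it.

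The paper's proof of the sufficient condition assumes only the set equality $\mathcal{D}_{\aa',Q}=\mathcal{D}_{\aa,Q}-\cc_0$ for some lattice point $\cc_0\in\DaQ$, and \emph{derives} the relation $\aa'=\aa-\cc_0$. The argument is geometric: since $\aa$ is a Voronoi vertex, its Delaunay polytope is full-dimensional, so $\aa$ is the unique point equidistant in the $Q$-norm from all points of $\DaQ$ (its circumcenter); by translation invariance, $\aa-\cc_0$ is then the unique such point for $\DaQ-\cc_0=\mathcal{D}_{\aa',Q}$; but $\aa'$ is also the unique circumcenter of $\mathcal{D}_{\aa',Q}$, forcing $\aa'=\aa-\cc_0$ and hence $\aa\sim\aa'$. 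This circumcenter-uniqueness step is the actual content of the sufficient direction, and your proposal does not supply it. To repair your argument, drop the assumption $\aa'=\aa-\cc_0$ from the hypothesis of the reverse implication and insert the uniqueness-of-circumcenter argument (noting where full-dimensionality of the Delaunay polytope at a Voronoi vertex is used to guarantee uniqueness). Your closing remarks verifying reflexivity, symmetry, and transitivity of $\sim$ from Items~3 and~4 of Proposition~\ref{prop:preequiv} are correct and consistent with the surrounding text.
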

\begin{proof}
The necessary condition follows from \Cref{prop:preequiv} and \Cref{eq:equivalence_Voronoi_vertices}. For the sufficient condition, assume that $\mathcal{D}_{\aa, Q} = \mathcal{D}_{\aa', Q} - \mathbf{\cc_0}$ for some point $\mathbf{\cc_0} \in \mathcal{D}_{\aa, Q}$. As $\aa$ is the unique point equidistant with respect to the $Q$-norm to all points of the set $\mathcal{D}_{\aa, Q}$, the point $\aa-\cc_0$ must be equidistant to all points of $\mathcal{D}_{\aa, Q} -\cc_0 = \mathcal{D}_{\aa', Q}$. On the other hand, $\aa$ is the unique such point (it is the circumcenter of $\mathcal{D}_{\aa, Q}$ with respect to the $Q$-norm), then it must be the case that $\aa = \aa'+ \cc_0$.
\end{proof}

 Combining \Cref{prop:preequiv} with \Cref{prop:equivvertex_general} we get the following. 
\begin{corollary}\label{cor:conv_ak}
    Let $\aa$ be a vertex of the Voronoi polytope $V_Q$. Then the convex hull of the points in $[\aa]$ verifies $\text{Conv}([\aa]) = \aa-D_{a,Q}$. In particular, $|[\aa] | = |\DaQ|$. 
\end{corollary}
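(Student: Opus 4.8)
The plan is to identify the equivalence class $[\aa]$ explicitly as a translate of the Delaunay set $\mathcal{D}_{\aa,Q}$, and then deduce both statements by transporting this identification through the affine map $T\colon \RR^g\to\RR^g$, $\cc\mapsto \aa-\cc$, which preserves convex hulls and is a bijection.

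First I would check that $\sim$ is genuinely an equivalence relation on $\mathcal{V}_Q$, since the identification of the class depends on it. It is reflexive because $\mathbf{0}\in\mathcal{D}_{\aa,Q}$; it is symmetric by Item~$3$ of \Cref{prop:preequiv} (if $\aa'=\aa-\cc_0$ with $\cc_0\in\mathcal{D}_{\aa,Q}$ then $-\cc_0\in\mathcal{D}_{\aa',Q}$ and $\aa=\aa'-(-\cc_0)$); and it is transitive by Item~$4$. Moreover Item~$1$ guarantees that every point $\aa-\cc$ with $\cc\in\mathcal{D}_{\aa,Q}$ is again a vertex of $V_Q$, so the relation is well posed. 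Since $\sim$ is already transitive, no transitive closure is needed and the class is read off directly from \Cref{eq:equivalence_Voronoi_vertices}:
\[
[\aa]\;=\;\{\,\aa'\in\mathcal{V}_Q \,:\, \aa\sim\aa'\,\}\;=\;\{\,\aa-\cc \,:\, \cc\in\mathcal{D}_{\aa,Q}\,\}\;=\;T(\mathcal{D}_{\aa,Q}).
\]
Alternatively, one can obtain the same identification from the intrinsic characterization of the Delaunay set in \Cref{prop:equivvertex_general}.

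With this in hand, both conclusions are formal. Because $T$ is an affine isomorphism, $\text{Conv}([\aa])=T(\text{Conv}(\mathcal{D}_{\aa,Q}))=\aa-D_{\aa,Q}$, the last equality being the definition $D_{\aa,Q}:=\text{Conv}(\mathcal{D}_{\aa,Q})$. Because $T$ is injective, it restricts to a bijection $\mathcal{D}_{\aa,Q}\to[\aa]$, so $|[\aa]|=|\mathcal{D}_{\aa,Q}|$; here both are finite sets, being (respectively identified with) the vertex set of the bounded polytope $D_{\aa,Q}$. I do not anticipate a genuine obstacle: all the real content is already contained in \Cref{prop:preequiv}, and the only delicate point is the bookkeeping showing that $[\aa]$ is exactly $\aa-\mathcal{D}_{\aa,Q}$ and not something larger — which is precisely what transitivity (Item~$4$) prevents — while Item~$1$ is what ensures the reverse inclusion, guaranteeing that each translate $\aa-\cc$ genuinely belongs to $\mathcal{V}_Q$ and hence to $[\aa]$.
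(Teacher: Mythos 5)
Your proof is correct and follows essentially the same route as the paper, which states this corollary as an immediate consequence of \Cref{prop:preequiv} and \Cref{prop:equivvertex_general} without writing out the details; your argument simply makes that derivation explicit (the identification $[\aa]=\aa-\DaQ$ via Items 1, 3, 4, and then transport of convex hulls and cardinality under the affine bijection $\cc\mapsto\aa-\cc$). The bookkeeping is all sound, so there is nothing to add.
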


\begin{example}[Genus $2$]\label{ex:sec1_genus2_banana}
Let $\Gamma=(V,E)$ be the banana graph of genus $2$, see Figure~\ref{fig:bananagen3}, or \cite[Figure~2 (right)]{AgoFevManStu} for the case of genus $2$. With the orientation given in Figure~\ref{fig:bananagen3}, a cycle basis is given by $e_1-e_2, e_1-e_3$. Thus, a matrix representing $H_1(\Gamma,\ZZ)$ and the tropical Riemann matrix are  
    \[
    B_2 \,=\, \begin{bmatrix}
        1 & -1 & 0\\
        1 & 0 & -1
    \end{bmatrix} \quad \text{and}\quad Q_2 \,=\, \begin{bmatrix}
        2 & 1\\
        1 & 2
    \end{bmatrix}.
    \]
    The Voronoi polytope $V_Q$ is the hexagon with vertices
\begin{equation*}
\begin{aligned}
\aa_1 &= \left(-\tfrac{1}{3}, \tfrac{2}{3}\right), &
\aa_2 &= \left(-\tfrac{1}{3}, -\tfrac{1}{3}\right), &
\aa_3 &= \left(\tfrac{2}{3}, -\tfrac{1}{3}\right), \\[2pt]
\aa_4 &= \left(\tfrac{1}{3}, \tfrac{1}{3}\right),  &
\aa_5 &= \left(-\tfrac{2}{3}, \tfrac{1}{3}\right), &
\aa_6 &= \left(\tfrac{1}{3}, -\tfrac{2}{3}\right).
\end{aligned}
\end{equation*}
     This is illustrated in ~\Cref{fig:hexagon} together with the associated Delaunay polytopes. The vertices of the hexagon form two Delaunay-translation equivalence classes, namely $\{\aa_1,\aa_2,\aa_3\}$ (in blue)  and $\{\aa_4,\aa_5,\aa_6\}$ (in green).    
    \begin{figure}[htbp]
        \centering
        \captionsetup{width=.8\linewidth, font=small}
        \includegraphics{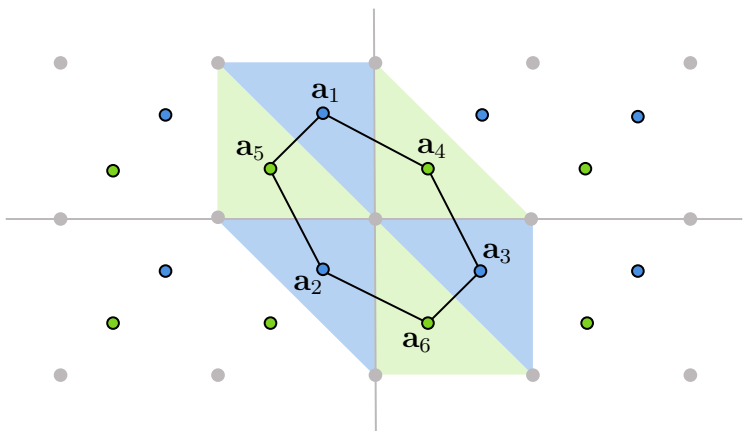}
        \caption{The Voronoi (hexagonal) and Delaunay (triangular) polytopes associated with the banana graph of genus 2. The colors highlights the two equivalence classes of vertices in ${\cal V}_Q$ under Delaunay-translation equivalence. The gray points are the lattice $\ZZ^2$, while the blue (resp.~green) points are the integer translates of the blue (resp. green) vertices of the Voronoi polytope.} 
        \label{fig:hexagon}
\end{figure}
\end{example}

\subsection{Chain complexes and degenerations of curves}\label{sec:OSbackground}

We now review the main notions and results concerning Voronoi and Delaunay decompositions that will be employed in the proofs of the upcoming section. Let $\Gamma=(V,E)$ be a finite connected graph as in Section~\ref{sec2}. The results can also be adapted to the case of disconnected graphs \cite{oda1979compactifications}, however, for our purposes, we deal with connected graphs.
We allow loops and multiple edges. Given a graph $\Gamma$, we assign and fix an \textit{orientation} of $\Gamma$, that is, a map $\iota: E\to V\times V$ such that $\iota(e)=(e_{-},e_+)$ where $e_{-},e_+$ denote respectively the source and target vertices. We denote $\iota(-e)=(e_+,e_-).$ 
Moreover, when $e$ is a loop, $\iota(e)$ denotes the loop oriented clockwise, while $\iota(-e)$ denotes the loop oriented counterclockwise. The results in this section are independent of the chosen orientation. 

Let $C_{\bullet}(\Gamma,S)$ denote the chain complex 
where the spaces of $0$-chains and $1$-chains with coefficients in a finite abelian group $S$ (we will use the groups $S=\ZZ,\RR$) are 
$$C_0(\Gamma,S) \,=\, \bigoplus_{v_i\in V}S\cdot v_i,\qquad\quad \qquad\quad C_1(\Gamma,S) \,=\, \bigoplus_{e_j\in E}S \cdot e_j,$$ where $\{v_i\}_{v_i\in V}$ and $\{e_j\}_{e_j\in E}$ canonically define $S$-bases. The above spaces are endowed with the canonical pairings $[\,\cdot\,\,,\,\cdot \,]$ and $(\,\cdot\,\,,\,\cdot \,)$ on $C_0(\Gamma,S)$ and $C_1(\Gamma,S)$, respectively, given by
\[[v_i,v_{i'}]\,=\, \delta_{ii'}, \qquad \text{and}\qquad (e_j,e_{j'})\,=\, \delta_{jj'},\]
where $\delta_{kk'}$ is the Kronecker delta. The boundary map  $\partial: C_1(\Gamma,S)\rightarrow C_0(\Gamma,S)$ is defined by 
\[\partial e_j \, =\, \begin{cases}
    0 & \text{if $e_j$ is a loop}\\
    v_{i'}-v_i & \text{if $\iota(e_j)=(v_i,v_i')$ 
    }.
\end{cases}\]
 The canonical pairings allow to identify the cochain complex $C^{\bullet}(\Gamma,S)$ with the chain complex $C_{\bullet}(\Gamma,S)$. The coboundary map becomes $\delta: C_0(\Gamma,S)\rightarrow C_1(\Gamma,S)$ given by
\[\delta v_i \,=\, \sum_{e_j \in E} [v_i,\partial e_j]e_j.\]
One can easily check that $\partial$ and $\delta$ are adjoint to each other, i.e., we have $(\delta x,y) = [x,\partial y]$ for $x \in C_0(\Gamma,S)$ and $y \in C_1(\Gamma,S)$.
Note that, when $S=\ZZ$, the kernel of $\partial$ is the first homology group of $\Gamma$ with coefficients in $\ZZ$, denoted $H_1(\Gamma,\ZZ)$.
By the adjointness of $\partial$ and $\delta$, we get the following decompositions orthogonal with respect to the pairings: 
\begin{equation}\label{eq:direct_sums}
    C_0(\Gamma,S) \,=\, H^0(\Gamma,S) \oplus \partial C_1(\Gamma,S), \quad C_1(\Gamma,S) \,=\, H_1(\Gamma,S) \oplus \delta C_0(\Gamma,S),
\end{equation}
where $H^0(\Gamma, S)$ denotes the kernel of $\delta$.
If we write the corresponding Laplacians 
as 
\begin{equation}\label{eq:Laplacians}
    \Delta_0 \,=\, \partial \delta : C_0(\Gamma,S) \rightarrow C_0(\Gamma,S) \quad\text{and}\quad \Delta_1 \,=\, \delta \partial: C_1(\Gamma,S) \rightarrow C_1(\Gamma,S) 
\end{equation}
then the decompositions in \eqref{eq:direct_sums} can be written as
\begin{equation}
     C_0(\Gamma,S) \,=\, \ker (\Delta_0) \oplus \partial C_1(\Gamma,S), \quad C_1(\Gamma,S) \,=\, \ker (\Delta_1)  \oplus \delta C_0(\Gamma,S). 
\end{equation}
Moreover $\partial$ and $\delta$ respectively induce the isomorphisms $\delta C_0(\Gamma,S)  \stackrel{\partial}{\simeq} \partial C_1(\Gamma,S)$ and $\partial C_1(\Gamma,S)  \stackrel{\delta}{\simeq} \delta C_0(\Gamma,S).$
Let $\pi':C_1(\Gamma,S)\rightarrow H_1(\Gamma,S)$ and $\pi'':C_1(\Gamma,S)\rightarrow \delta C_0(\Gamma,S)$ denote the orthogonal projections induced by the decomposition in \eqref{eq:direct_sums} on the right, and $\rho: C_1(\Gamma,S)\rightarrow H^1(\Gamma,S)$ the canonical surjection. Then we have 
\begin{equation}\label{eq:rho_pi_prime}
\partial \circ \pi'', \, = \, \partial  \qquad\qquad \rho \circ \pi' \, = \, \rho.
\end{equation}
For fixed orientation and a homology basis for the graph $\Gamma$, the surjection $\rho$ is explicitly represented by the matrix $B$ from \eqref{eq:tropical_Riemann_matrix}. 

We identify $C_1(\Gamma,\RR)$ with $\RR^m$, endowed with its standard Euclidean metric. Under this identification, $\Lambda:=C_1(\Gamma,\ZZ)\subset C_1(\Gamma,\RR)$ corresponds to the lattice $\ZZ^m$, and similarly $\Lambda\cap H_1(\Gamma,\RR) = H_1(\Gamma,\ZZ) \simeq \ZZ^g$, which also coincides with the image of the vertices of the Delaunay decomposition in $\RR^g$. Indeed, each vector $B^T\cc$ represents a cycle in $H_1(\Gamma,\ZZ)$. The image of the Voronoi decomposition in $C_1(\Gamma,\RR)$ is given by the integer translates
$B^T(V_Q+{\bf c})$, with ${\bf c}\in \ZZ^g$. One checks that $B^T(V_Q)$ is itself a polytope, obtained as the convex hull of the image under $B^T$ of the vertices of $V_Q$.
By \eqref{eq:rho_pi_prime}, we have $B^T(V_Q)\subset H_1(\Gamma,\RR)$. Next, the fundamental cell of 
$\Lambda^\vee$ 
is the hypercube $C^m$, with 
\begin{equation}\label{eq:cube}
C^m \, \coloneqq \, \left[-\frac{1}{2},\frac{1}{2}\right],
\end{equation}
the hypercube in $\RR^m$ with vertices the $2^m$ vectors in $\{-\tfrac{1}{2},\tfrac{1}{2}\}^m$.
In fact $\Lambda^\vee=\Lambda + (\frac{1}{2},\dots, \frac{1}{2})$. Since a Voronoi polytope is in particular a zonotope, it can be realized as the projection of a hypercube. In the ambient space $\RR^m$, this projection becomes explicit: we have $B^T(V_Q)=\pi'(C^m)$ and the Voronoi subdivision is identified with $\pi'(\Lambda^{\vee})$. For more details about Voronoi and Delaunay decompositions for graphs we refer to \cite[Section~5]{oda1979compactifications}. 

Let $M\in \ZZ^{n\times m}$ be an integer matrix, and let $P\subset \RR^m$ be a polytope with vertices in $\RR^m$. We denote by $M(P)$ the polytope defined as the convex hull of the points $M\cdot \aa$, where $\aa$ ranges over the vertices of $P$. The advantage of working with $1$-chains lies in the fact that, in this space, the right-hand expression in \eqref{eq:rho_pi_prime} provides a realization of the Voronoi polytope as the projection of a hypercube, namely, we have $B^T(V_Q)=\pi'(C^m)$.
   
\begin{example}[Genus $2$]
Let $\Gamma$ be as in Example~\ref{ex:sec1_genus2_banana}. The chain groups are $C_0(\Gamma,\ZZ)=\ZZ \cdot v_1 \oplus \ZZ \cdot v_2$ and $C_0(\Gamma,\ZZ)=\ZZ \cdot e_1 \oplus \ZZ \cdot e_2 \oplus  \ZZ \cdot e_3$, with boundary maps and Laplacians represented by the matrices
\begin{equation*}
\partial\,=\,\begin{bmatrix}
        -1 & -1 & -1\\
        1 & 1 & 1 
    \end{bmatrix},
    \qquad
\delta\, = \, \begin{bmatrix}
        -1 & 1 \\
        -1 & 1 \\
        -1 & 1 \\
    \end{bmatrix},\qquad
\Delta_1\, = \, \begin{bmatrix}
    3 & -3\\
    -3 & 3
\end{bmatrix},\qquad
\Delta_2\, = \, \begin{bmatrix}
    2 & 2 & 2\\
    2 & 2 & 2\\
    2 & 2 & 2
\end{bmatrix}.
\end{equation*}
The space of cycles, which is $H_1(\Gamma,\RR)=\ker(\partial)$, represented by the matrix $B$ in Example~\ref{ex:sec1_genus2_banana} and the canonical projection $\pi':C_1(\Gamma,\RR)\to H_1(\Gamma,\RR)$ is given by 
\begin{equation}\label{eq:pi_prime_example}
\pi'\, = \,
\begin{bmatrix}
\frac{2}{3} & -\frac{1}{3} & -\frac{1}{3} \\[1pt]
-\frac{1}{3} & \frac{2}{3} & -\frac{1}{3}  \\[1pt]
-\frac{1}{3} & -\frac{1}{3} & \frac{2}{3} 
\end{bmatrix}.
\end{equation} 
The Voronoi polytope $V_Q\subset\RR^2$ can be lifted to $\RR^3$ via the map represented by $B^T$. One can check that its vertices coincide with the image of the vertices of the cube $C^3=[-\tfrac{1}{2},\tfrac{1}{2}]^3$ via the projection $\pi'$. This gives a hexagon in $\RR^3$ with $\pi'(\pm(\tfrac{1}{2},\tfrac{1}{2},\tfrac{1}{2}))=(0,0,0)$ lying in the interior of the hexagon. This is illustrated in~\Cref{fig:hexagon_orientations}. 
\end{example}

\section{Voronoi--Delaunay combinatorics of banana graphs}\label{sec3}

We now specialize the general framework to the case of tropical curves whose dual graphs are banana (or dipole) graphs, see \cite{amini2023tropical,haase2012linear}. Unless stated otherwise, this will be the case for the rest of the paper. Their Voronoi–Delaunay geometry is particularly explicit and thus allows us to conduct a careful study of its associated Hirota varieties and KP solitons, which comprises the remainder of this paper.

Let $\Gamma_g =(V,E)$ be the banana metric graph with two vertices $v_1,v_2$ connected by $n= g+1$ edges $e_1,\dots,e_{n}$, representing the branch points in~\eqref{eq:hyperelliptic_curve}. Figure~\ref{fig:bananagen3} illustrates the graph $\Gamma_g$. In what follows, we will omit the subscript $g$, unless necessary.
\begin{figure}
        \centering
        \captionsetup{width=.8\linewidth, font=small}
\begin{tikzpicture}
    \node[circle, draw, fill=black, inner sep=2pt, label=left:$v_1$] (u) at (0,0) {};
    \node[circle, draw, fill=black, inner sep=2pt, label=right:$v_2$] (v) at (3,0) {};
    
    \draw[thick,decoration={markings, mark=at position 0.5 with {\arrow{<}}},
          postaction={decorate}] 
        (u) to[bend left=60] node[above] {$e_1$} (v);
        
    \draw[thick,decoration={markings, mark=at position 0.5 with {\arrow{<}}},
          postaction={decorate}] 
        (u) to[bend left=22] node[midway, above=0pt, yshift=-2pt] {$e_2$} (v);
        
    \draw[thick,decoration={markings, mark=at position 0.5 with {\arrow{<}}},
          postaction={decorate}] 
        (u) to[bend right=22] 
        (v);
        
    \draw[thick,decoration={markings, mark=at position 0.5 with {\arrow{<}}},
          postaction={decorate}] 
        (u) to[bend right=60] node[below] {$e_{n}$} (v);

    \node at (1.5, 0.1) {$\vdots$};
\end{tikzpicture}

        \caption{The banana graph of genus $g$ with orientation $\iota_0$.}
        \label{fig:bananagen3}
    \end{figure}
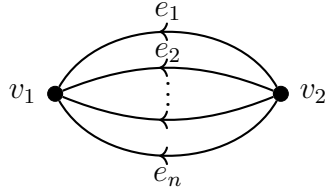
We denote $\iota_0$ the orientation on $\Gamma$ such that $\iota_0(e)=(v_2,v_1)$ for all $e\in E$.
The tropical Riemann matrix of $\Gamma$ is defined in \eqref{eq:tropical_Riemann_matrix}, where we choose the rows of the matrix $B$ to be given by the cycle basis
$H_1(\Gamma,\ZZ)\,=\, \langle e_1-e_2, e_1-e_3, \dots , e_1-e_{g+1}\rangle$. For the choice of $l(e_i)=1,$ for $ i=1,\dots,g+1$, we get that $Q$ has $Q_{ii}=2$ and $Q_{ij}=1$ for all $i,j=1,\dots,g$ and $i\neq j$. We are interested in the combinatorics of the Voronoi and Delaunay polytopes arising from this family of graphs.

\subsection{Voronoi polytopes}

Before stating the structure of the Voronoi polytope explicitly, it is informative to note a key feature of banana graphs: the matrix $Q$ has constant diagonal and off-diagonal entries, so the associated norm is symmetric under permutations of the edges. As a result, we find that the Voronoi polytope is the projection of the $(g+1)$-dimensional cube onto the hyperplane of coordinate sum zero. The following theorem records the resulting family of vertices and their combinatorics.

\begin{theorem}\label{thm:vertices_voronoi_Rg}
   Let $\Gamma$ be a genus-$g$ banana graph. The associated Voronoi polytope $V_{Q}\subset \RR^g$ is a polytope of dimension $g$ 
   with exactly $2(2^{g}-1)$ vertices $\bigcup_{i=1}^g \,[{\bf k}]$ where    
\begin{equation}\label{eq:bananavorvert}
 [{\bf k}] \coloneqq  \Biggl\{        \text{permutations of }\Bigg( \underbrace{ -\frac{k}{g+1},\, \dots,\,  -\frac{k}{g+1}}_{g-k \text{ or } g+1-k \text{ times}}, \underbrace{\frac{g+1-k}{g+1},\,\dots,\, \frac{g+1-k}{g+1}}_{k \text{ or } k-1 \text{ times}}\Bigg)\!\! \in \RR^g\Biggr\}.
    \end{equation}
  The $f$-vector $(f_0, f_1, \dots, f_{g-1})$ of $V_{Q}$ is given by $f_l = \binom{g+1}{l}(2^{g+1-l}-2)$ for $l = 0, \dots, g-1$.
\end{theorem}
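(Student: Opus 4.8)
The plan is to exploit the identification, discussed just before the theorem, of $V_Q$ with the projection $\pi'(C^{g+1})$ of the $(g+1)$-cube onto the hyperplane $H_1(\Gamma_g,\mathbb{R}) = \{x \in \mathbb{R}^{g+1} : \sum x_i = 0\}$, where $\pi' = \mathrm{id} - \frac{1}{g+1}\mathbf{1}\mathbf{1}^T$ is the orthogonal projection (this is the genus-$g$ analogue of \eqref{eq:pi_prime_example}, and it is orthogonal projection precisely because $Q$ has constant diagonal and off-diagonal). First I would record that the vertices of the zonotope $\pi'(C^{g+1})$ are images of those vertices $\varepsilon \in \{-\tfrac12,\tfrac12\}^{g+1}$ of the cube that are \emph{not} killed by the projection in the sense of lying on a generic supporting hyperplane; concretely, for $\varepsilon$ with $k$ coordinates equal to $+\tfrac12$ (so $g+1-k$ equal to $-\tfrac12$), one computes $\pi'(\varepsilon)$ to have $k$ coordinates equal to $\tfrac{g+1-k}{g+1} - \tfrac12 = \tfrac12\cdot\tfrac{g+1-2k}{g+1}$ — wait, more cleanly: translating the cube to $[0,1]^{g+1}$, the $0/1$ vertex $\chi_S$ with $|S| = k$ projects to a vector with $k$ entries equal to $\tfrac{g+1-k}{g+1}$ and $g+1-k$ entries equal to $-\tfrac{k}{g+1}$, up to an overall shift by $\tfrac12\mathbf{1}$ that does not affect the projection. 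This matches the description \eqref{eq:bananavorvert} after renaming: the class $[\mathbf{k}]$ collects exactly the $\binom{g+1}{k}$ permutations coming from $|S| = k$, for $1 \le k \le g$ (the cases $k=0$ and $k=g+1$ both project to the origin, which is interior).

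The core combinatorial claim is then: a vertex $\chi_S$ of the cube maps to a \emph{vertex} of $\pi'(C^{g+1})$ iff $S \ne \emptyset$ and $S \ne \{1,\dots,g+1\}$, and distinct such $S$ with the same value of $|S|$ give distinct points (this last is clear since they are distinct permutations). To see $\chi_S$ projects to a vertex for $0 < |S| < g+1$, exhibit a linear functional on $H_1(\Gamma_g,\mathbb{R})$ maximized uniquely at $\pi'(\chi_S)$: take $\ell(x) = \sum_{i \in S} x_i$ restricted to the hyperplane; among $0/1$ points $\chi_T$ this is maximized exactly when $T \supseteq S$ and $T \cap (S^c) = \emptyset$, i.e. $T = S$, after accounting for the projection — I would verify that on the affine hull the functional separating $\pi'(\chi_S)$ is $\sum_{i\in S} x_i - \sum_{i \notin S} x_i$, whose value on $\pi'(\chi_T)$ is $|S\cap T| - |S\setminus T| - |T\setminus S| + |S^c \cap T^c|$-type expression, strictly maximized at $T = S$. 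Conversely $\chi_\emptyset$ and $\chi_{[g+1]}$ both land at $0$, the centroid of the hexagon-type polytope, hence are not vertices. This yields the vertex count $\sum_{k=1}^{g}\binom{g+1}{k} = 2^{g+1} - 2 = 2(2^g - 1)$, and the partition into the classes $[\mathbf{k}]$.

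For the $f$-vector I would use the standard description of faces of a zonotope $Z = \sum_{i=1}^{m}[-\mathbf{b}_i,\mathbf{b}_i]$ in terms of the covectors of the associated oriented matroid, or equivalently of the central hyperplane arrangement dual to the generators (\cite[Chapter~7]{ziegler2012lectures}): the $l$-faces of $Z$ are in bijection with the $l$-dimensional flats-with-sign data of the arrangement. For the banana graph the generators $\mathbf{b}_i$ (rows of $B$, or rather the vectors $\pi'(e_i)$) are the projections of the standard basis vectors of $\mathbb{R}^{g+1}$ onto $H_1(\Gamma_g,\mathbb{R})$, which form the vertex set of a regular simplex — the associated arrangement is the braid arrangement $A_{g+1}$ (hyperplanes $x_i = x_j$) intersected with the hyperplane $\sum x_i = 0$. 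The faces of $Z$ of dimension $l$ then correspond to ordered set partitions of $\{1,\dots,g+1\}$ into $g+2-l$ blocks of a certain type; counting these gives $f_l$. Concretely I would argue that an $l$-face of $V_Q$ is obtained by choosing a subset $I \subseteq \{1,\dots,g+1\}$ of size $g+1-l$ of "fixed" coordinates together with a nontrivial sign pattern on the complement (of which there are $2^{l+1}-2$), then show each $l$-face arises this way, perhaps a bounded number of times, and reconcile the count with $f_l = \binom{g+1}{l}(2^{g+1-l}-2)$; note the stated formula has $\binom{g+1}{l}$ (choosing the $l$ "free/split" coordinates) times $2^{g+1-l}-2$ (nontrivial proper sign choices on the remaining $g+1-l$ coordinates), and at $l=0$ it indeed reproduces $\binom{g+1}{0}(2^{g+1}-2) = 2(2^g-1)$ and at $l = g-1$ gives $\binom{g+1}{g-1}(2^{2}-2) = 2\binom{g+1}{2}$, the facet count. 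Checking these boundary cases and a small example ($g=2$: $f_0 = 6$, $f_1 = 6$, matching the hexagon of Example~\ref{ex:sec1_genus2_banana}) is a good sanity test.

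The main obstacle I anticipate is the $f$-vector: turning the "projection of the cube / zonotope of a simplex" picture into a clean bijection between $l$-faces and the combinatorial data counted by $\binom{g+1}{l}(2^{g+1-l}-2)$ requires care about which sign patterns on the "free" coordinates actually yield a face of the correct dimension and about whether the correspondence is one-to-one (as opposed to, say, each face being counted twice and the $2$'s absorbing this). The vertex part is comparatively routine linear algebra; the genuine work is a careful face enumeration for the zonotope, for which the cleanest route is probably to identify the normal fan of $V_Q$ with (the restriction to $\sum x_i = 0$ of) the fan of the braid arrangement and count its cones by rank, matching ordered/unordered set-partition statistics against the claimed formula.
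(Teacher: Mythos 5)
Your treatment of the vertices is correct and is essentially the paper's argument: both identify $(\RR^g,\langle\cdot,\cdot\rangle_Q)$ isometrically with the sum-zero hyperplane $H\subset\RR^{g+1}$, realize $V_Q$ as the orthogonal projection of the cube $C^{g+1}$ along $\mathbf 1$, compute that a cube vertex with $k$ positive entries projects to the coordinate pattern of \eqref{eq:bananavorvert}, and observe that only $\pm\tfrac12\mathbf 1$ collapse to the interior point $\mathbf 0$. Your extra step of exhibiting a supporting functional for each surviving projected vertex (the inner product with the cube vertex itself works, since its projection has no zero coordinate when $0<|S|<g+1$) justifies a point the paper only asserts, and is welcome.

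The gap is in the $f$-vector, exactly where you flag it: you name the right combinatorial objects ($\ell$ free coordinates plus a non-constant sign pattern on the remaining $g+1-\ell$) and check boundary cases, but you do not prove the bijection, and the detour through the braid arrangement is heavier machinery than needed. The missing lemma is short and elementary. A proper face of $Z=\pi_H(C^{g+1})$ is the maximizer of a nonzero functional $c$ on $H$; its pullback $\tilde c$ to $\RR^{g+1}$ satisfies $\sum_i\tilde c_i=0$, and the face of the cube maximizing $\tilde c$ fixes coordinate $i$ at $\mathrm{sign}(\tilde c_i)\cdot\tfrac12$ where $\tilde c_i\neq0$ and frees the coordinates where $\tilde c_i=0$. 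Since $\tilde c\neq0$ has coordinate sum zero, the fixed signs can never be all equal; conversely, any cube face with free set $S\subsetneq[g+1]$ and non-constant fixed signs arises from such a $\tilde c$ (choose positive weights on the fixed coordinates making the signed sum vanish), and the projection is injective on it because its direction space $\mathrm{span}\{\ee_i:i\in S\}$ does not contain $\mathbf 1$. This gives a \emph{dimension-preserving} bijection between $\ell$-faces of $V_Q$ and $\ell$-faces of the cube whose fixed coordinates are not all $+\tfrac12$ or all $-\tfrac12$, whence $f_\ell=\binom{g+1}{\ell}(2^{g+1-\ell}-2)$; note in particular that one counts $\ell$-faces of the cube, not $(\ell+1)$-faces, since the projection preserves the dimension of every proper face. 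The paper's proof proceeds directly with cube faces in this spirit, identifying the excluded faces as precisely those with constant fixed part; with the lemma above your outline closes to a complete proof without invoking oriented-matroid covectors.
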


\begin{proof}
    Define the linear map
\[
\Phi:\RR^g\longrightarrow H\subset\RR^{g+1},\qquad
\Phi(\xx)=\bigl(x_1,\dots,x_g,-\sum_{i=1}^g x_i\bigr),
\]
where $H=\{{\bf u}\in\RR^{g+1}:\sum_{i=1}^{g+1}u_i=0\}$ is the 
hyperplane orthogonal to $\mathbf 1=(1,\dots,1)$.
Then for all $\xx,\yy\in\RR^g$,
\[
\Phi(\xx)\cdot \Phi(\yy)=x\cdot y +\bigl(\sum_{i=1}^g x_i\bigr)\bigl(\sum_{i=1}^g y_i\bigr)=\langle \xx,\yy\rangle_Q,
\]
so $\Phi$ is an isometric isomorphism $(\RR^g,\langle\cdot,\cdot\rangle_Q)\xrightarrow{\cong}(H,\text{Euclidean})$.
Moreover, $\Phi(\ZZ^g)=\Lambda_H$, where $\Lambda_H:=\{{\bf u}\in\ZZ^{g+1}:\sum u_i=0\}$ is a rank-$g$ sublattice of $H$. In the notation of Section \ref{sec:OSbackground}, for the banana graph we have
$
C_1(\Gamma,\RR) \cong \RR^{g+1},$ and $
\Lambda := C_1(\Gamma,\ZZ) \cong \ZZ^{g+1}.
$
Moreover,
\[
H_1(\Gamma,\RR) = \ker \partial
   = \biggl\{u \in \RR^{g+1} : \sum_{i=1}^{g+1} u_i = 0\biggr\} = H,
\]
so that
$
H_1(\Gamma,\mathbb{Z}) = \Lambda \cap H = \Lambda_H.
$ Thus $H$ is precisely $H_1(\Gamma,\RR)$, and the orthogonal projection
$
\pi_H : C_1(\Gamma,\RR) \to H
$ along the line spanned by $\mathbf{1}$ coincides with the projection $\pi'$ from \eqref{eq:rho_pi_prime} onto $H_1(\Gamma,\mathbb{R})$ under the above identification.
Let $V_{\mathrm{std}}\subset\RR^{g+1}$ denote the Euclidean Voronoi cell of $\ZZ^{g+1}$ at $\bf 0$, namely the cube $C=C^{g+1}$ as defined in~\eqref{eq:cube}. A standard Voronoi inequality calculation shows that the Euclidean Voronoi cell of the sublattice $\Lambda_H$ inside $H$ is the orthogonal projection of $C$ to $H$:
\[
\mathrm{Vor}_H(\Lambda_H)\coloneqq V_{I_{g+1}}\cap \ker \Delta_1 =\pi_H(C),
\]
where $\pi_H$ is the orthogonal projection along $\mathbf 1$ onto $H$. Indeed, the defining halfspaces for $\mathrm{Vor}_H(\Lambda_H)$ come from the inequalities ${\bf m}\cdot {\bf u}\le \frac12 \|{\bf m}\|^2$ for all ${\bf m}\in\Lambda_H\setminus\{\bf 0\}$, which are the restrictions to $H$ of the corresponding inequalities for $\ZZ^{g+1}$. Since $\Phi$ is an isometry $\RR^g\cong H$, we obtain
\[
V_Q\,=\,\Phi^{-1}\bigl(\mathrm{Vor}_H(\Lambda_H)\bigr)\,=\,\Phi^{-1}\bigl(\pi_H(C)\bigr).
\]
Thus $V_Q$ is the orthogonal projection of the cube $C$ to $H$ transported back to $\RR^g$ via $\Phi^{-1}$.
In particular, $V_Q$ has dimension $g$.

The vertices of $\pi_H(C)$ are precisely the projections of the vertices of $C$ except the two antipodal vertices $\pm\frac12\mathbf 1$, which both project to ${\bf 0}\in H$. Let $\vv\in\{\pm\tfrac12\}^{g+1}$ be a cube vertex and let $t$ be the number of $+{\tfrac12}$ entries of $\vv$.
The orthogonal projection to $H$ is
\[
\pi_H(\vv)\,=\,\vv-\frac{\vv\cdot \mathbf 1}{\|\mathbf 1\|^2}\,\mathbf 1
\,=\,\vv-\frac{2t-(g+1)}{2(g+1)}\,\mathbf 1.
\]
Hence each $+$ coordinate of $\vv$ becomes $\frac{g+1-t}{g+1}$ and each $-$ coordinate becomes $-\frac{t}{g+1}$ in $\pi_H(\vv)$.
Thus, for $t=k\in\{1,\dots,g\}$, the projected vertex in $H$ has $k$ coordinates equal to $\frac{g+1-k}{g+1}$ and $g+1-k$ coordinates equal to $-\frac{k}{g+1}$, with total sum $0$.
Identifying $H\cong \RR^g$ by deleting any one coordinate (which preserves vertices), we obtain exactly the family of $g$-vectors listed in \eqref{eq:bananavorvert}, with the two possible multiplicities “$g-k$ or $g+1-k$” and “$k$ or $k-1$” depending on whether the deleted coordinate was a positive or negative entry.

Faces of $\pi_H(C)$ are images of faces of $C$ under the projection along $\RR\mathbf 1$. An $(\ell+1)$-dimensional face $F$ of $C$ is obtained by choosing a set $S$ of $\ell+1$ coordinates to vary in $[-\tfrac12,\tfrac12]$ and fixing the remaining $g+1-(\ell+1)=g-\ell$ coordinates to signs $\pm\tfrac12$. The direction space of $F$ is $\mathrm{span}\{{\bf e}_i:i\in S\}$, which does not contain $\mathbf 1$ unless $S=\{1,\dots,g+1\}$. Hence for every proper face ($|S|\le g$), the projection $\pi_H$ is injective on the affine span of $F$ and lowers dimension by exactly $1$, so $\pi_H(F)$ is an $\ell$-dimensional face of $\pi_H(C)$.

To count $\ell$-faces of $\pi_H(C)$ we may count $(\ell+1)$-faces of $C$ and subtract those whose projection degenerates. Degeneracy occurs precisely when the fixed coordinates are all $+\tfrac12$ or all $-\tfrac12$, because in those two cases $F$ lies in a facet orthogonal to $\mathbf 1$ and its projection collapses by one further dimension. 
Hence
\[
f_\ell=\binom{g+1}{\ell}\bigl(2^{\,g+1-\ell}-2\bigr), \qquad \ell=0,1,\dots,g-1.
\]
Transporting $\pi_H(C)$ back to $\RR^g$ via the isometry $\Phi^{-1}$ gives $V_Q$, preserving the face lattice and the coordinates given above.

\end{proof}
In the case of banana graphs, the Delaunay-translation equivalence classes can be described explicitly. This justifies the notation introduced in~\Cref{thm:vertices_voronoi_Rg}, as formalized in the following result.
\begin{proposition}\label{prop:vertexequiv}
    Let $V_Q$ be the Voronoi polytope associated with a genus $g$ banana graph. For two vertices $\aa ,\aa'\in {\cal V}_Q$ as in \Cref{thm:vertices_voronoi_Rg}, then $\aa \sim \aa'$ if and only if there exists $k\in\{1,\dots,g\}$ such that $\aa,\aa'\in [{\bf k}]$.
\end{proposition}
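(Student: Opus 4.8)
The plan is to transport the whole statement into the ``cube picture'' set up in the proof of \Cref{thm:vertices_voronoi_Rg}. There one has the isometric isomorphism $\Phi\colon(\RR^g,\langle\cdot,\cdot\rangle_Q)\xrightarrow{\cong}(H,\text{Euclidean})$ with $\Phi(\ZZ^g)=\Lambda_H=\{u\in\ZZ^{g+1}:\sum_i u_i=0\}$, under which $V_Q$ becomes $\pi_H(C)$ for $C=[-\tfrac12,\tfrac12]^{g+1}$ and $\pi_H$ the orthogonal projection along $\mathbf 1$ onto $H$. Being linear, isometric and lattice-preserving, $\Phi$ sends Voronoi vertices to Voronoi vertices, carries the Delaunay set $\mathcal D_{\aa,Q}$ to the Euclidean Delaunay set of $\Phi(\aa)$ with respect to $\Lambda_H$, and carries the relation $\sim$ to the analogous relation for $\Lambda_H$; and, as shown in that proof, $\Phi([{\bf k}])$ is precisely the set of projections $\pi_H(\vv)$ of cube vertices $\vv\in\{\pm\tfrac12\}^{g+1}$ with exactly $k$ entries equal to $+\tfrac12$. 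Writing $t(\vv)$ for that number — so $\pi_H(\vv)\in\Phi([{\bf k}])$ exactly when $t(\vv)=k$ — it suffices to prove that for cube vertices $\vv,\vv'$ with $t(\vv),t(\vv')\in\{1,\dots,g\}$ one has $\pi_H(\vv)\sim\pi_H(\vv')$ if and only if $t(\vv)=t(\vv')$.

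The key ingredient is an explicit description of the (Euclidean, with respect to $\Lambda_H$) Delaunay set $\mathcal D_{\pi_H(\vv)}$ of a Voronoi vertex $\pi_H(\vv)$. Setting $\epsilon_i:=2v_i\in\{\pm1\}$, I claim that
\[
\mathcal D_{\pi_H(\vv)}\;=\;\Bigl\{\cc\in\ZZ^{g+1}\;:\;\textstyle\sum_i c_i=0\ \text{ and }\ c_i\in\{0,\epsilon_i\}\text{ for all }i\Bigr\}.
\]
This is immediate from \eqref{eq:Delaunay_set}: a point $\cc$ of $\Lambda_H$ lies in $\mathcal D_{\pi_H(\vv)}$ iff $2\,\pi_H(\vv)\cdot\cc=\|\cc\|^2$; since $\cc\in H$ one may replace $\pi_H(\vv)\cdot\cc$ by $\vv\cdot\cc$, and then $\|\cc\|^2-2\,\vv\cdot\cc=\sum_i c_i(c_i-\epsilon_i)$ is a sum of non-negative integers vanishing precisely when every $c_i\in\{0,\epsilon_i\}$. (A quick count gives $|\mathcal D_{\pi_H(\vv)}|=\sum_j\binom{t(\vv)}{j}\binom{g+1-t(\vv)}{j}=\binom{g+1}{t(\vv)}$, consistent with \Cref{cor:conv_ak}.)

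For the ``if'' direction I would assume $t(\vv)=t(\vv')=k$ and take $\cc_0:=\vv-\vv'\in\ZZ^{g+1}$ (its entries lie in $\{-1,0,1\}$). Its coordinate sum is $\sum_i v_i-\sum_i v'_i=0$, because $\sum_i v_i=\tfrac12\bigl(2t(\vv)-(g+1)\bigr)$ depends only on $t(\vv)$; and running through the four sign cases $v_i,v'_i\in\{\pm\tfrac12\}$ shows $c_{0,i}\in\{0,\epsilon_i(\vv)\}$ for every $i$. By the claim $\cc_0\in\mathcal D_{\pi_H(\vv)}$, and $\pi_H(\vv)-\cc_0=\pi_H(\vv-\cc_0)=\pi_H(\vv')$, so $\pi_H(\vv)\sim\pi_H(\vv')$. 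For the ``only if'' direction I would assume $\pi_H(\vv)\sim\pi_H(\vv')$, that is $\pi_H(\vv')=\pi_H(\vv)-\cc_0$ with $\cc_0\in\mathcal D_{\pi_H(\vv)}$, and set $\vv'':=\vv-\cc_0$. Since $c_{0,i}\in\{0,\epsilon_i(\vv)\}$, a coordinatewise check gives $v''_i=v_i-c_{0,i}\in\{\pm\tfrac12\}$, so $\vv''$ is a cube vertex with $\pi_H(\vv'')=\pi_H(\vv-\cc_0)=\pi_H(\vv')$; it is non-diagonal because $\pi_H(\vv')\neq\mathbf 0$ (as $t(\vv')\in\{1,\dots,g\}$), hence $\vv''=\vv'$ by the injectivity of $\pi_H$ on non-diagonal cube vertices recorded in \Cref{thm:vertices_voronoi_Rg}. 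Finally $\sum_i v'_i=\sum_i v_i-\sum_i c_{0,i}=\sum_i v_i$ since $\cc_0\in\Lambda_H$, and the coordinate sum of a cube vertex determines its number of $+\tfrac12$ entries, so $t(\vv')=t(\vv)$ and $\pi_H(\vv),\pi_H(\vv')$ lie in the same class. The main obstacle will be the Delaunay-set description together with the bookkeeping needed to transfer it faithfully through $\Phi$ and the coordinate-deletion identification $H\cong\RR^g$; once that is in place, the rest is a short run of sign checks.
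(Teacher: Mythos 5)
Your proof is correct, but it takes a genuinely different route from the paper's. The paper argues directly on the explicit coordinates in $\RR^g$ from \Cref{thm:vertices_voronoi_Rg}: for the forward direction, any integer translate of a vertex $\aa\in[\kk]$ either acquires a coordinate of absolute value at least one (hence leaves $V_Q$) or keeps every coordinate in $\{-\tfrac{k}{g+1},\tfrac{g+1-k}{g+1}\}$ and so stays in $[\kk]$; for the converse it simply observes that two vertices of the same class differ by a vector with entries in $\{0,\pm1\}$. Your argument instead lifts everything to the cube picture in $\RR^{g+1}$ and hinges on the explicit description $\mathcal D_{\pi_H(\vv)}=\{\cc\in\Lambda_H:\ c_i\in\{0,\epsilon_i\}\ \text{for all }i\}$, which the paper never states. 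This buys you something concrete: the paper's converse only exhibits $\cc_0=\aa-\aa'\in\ZZ^g$, whereas \Cref{eq:equivalence_Voronoi_vertices} requires $\cc_0\in\DaQ$, i.e.\ an equidistance condition in the $Q$-norm; your sign check $c_{0,i}\in\{0,\epsilon_i(\vv)\}$ verifies exactly this membership, so your write-up is the more complete of the two (and yields $|\DaQ|=\binom{g+1}{k}$ as a byproduct, consistent with \Cref{cor:conv_ak}). The one piece of bookkeeping you flag --- transporting $\sim$ and the Delaunay sets through $\Phi$ --- is harmless, since $\Phi$ is a linear isometry with $\Phi(\ZZ^g)=\Lambda_H$ and therefore commutes with all the constructions of \Cref{prop:preequiv}.
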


\begin{proof}
    Let $k$ be such that $\aa\in [{\bf k}]$, i.e.,  $\aa$ has coordinates equal to either $-\frac{k}{g+1}$ or $\frac{g+1-k}{g+1}$. Then, any integer translate of $\aa$ has coordinates with absolute value greater than one (meaning it lies outside of the Voronoi polytope) or equal to either $-\frac{k}{g+1} +1 = \frac{g+1-k}{g+1}$ or $\frac{g+1-k}{g+1} - 1 = -\frac{k}{g+1}$. Thus any vertex equivalent to $\aa$ must also also lie in the class $[\bf k]$. The other direction is straightforward as it is enough to check that if $\aa$ and $\aa'$ lie in the same class $[{\bf k}]$, then their coordinates can only differ by $0, 1, -1$. 
\end{proof}

Thus, the set of Voronoi vertices ${\cal V}_Q$ is divided into $g$ equivalence classes $[{\bf 1}],\dots,[{\bf g}]$, so that the class $[{\bf k}]$ contains $\binom{n}{k}$ points.

\begin{example}[Genus $2$]
    In the notation of~\Cref{thm:vertices_voronoi_Rg}, the equivalence classes described in \Cref{ex:sec1_genus2_banana} and \Cref{fig:hexagon} become $[\textcolor{myblue}{\bf 1}]=\{\aa_1,\aa_2,\aa_3\}$ and   $[\textcolor{mygreen}{\bf 2}]=\{\aa_4,\aa_5,\aa_6\}$.
\end{example}

We now recover the combinatorial structure of the polytope obtained by pulling back the Voronoi polytope $V_Q$ to $\RR^n$ via the canonical surjection $\rho:C_1(\Gamma,\RR)\to H^1(\Gamma,\RR)$, introduced in \Cref{sec:OSbackground}, represented by the cycle basis matrix $B$. Recall that two polytopes are said to be \textit{combinatorially equivalent} if there exists a bijection between their faces that preserves the inclusion relation; see \cite{ziegler2012lectures}. 

\begin{proposition}\label{prop:im_proj_hypercube}
    The polytopes $V_Q$ and $B^T(V_Q)$ are combinatorially equivalent. Moreover, $B^T(V_Q)$ has vertex set $\bigcup_{i=1}^gB^T([\kk])$, where for each $k = 1, \dots, g$, 
\begin{equation*}
B^T\!([\kk]) \coloneqq\Biggl\{\text{permutations of }  \Bigg( \underbrace{ \frac{k}{g+1}, \dots,  \frac{k}{g+1}}_{n-k \text{ times}}, \underbrace{-\frac{g+1-k}{g+1},\dots, -\frac{g+1-k}{g+1}}_{k \text{ times}}\Bigg)\!\Biggr\} \subset \RR^{n}.
 \end{equation*}
Moreover, $B^T(V_Q)$ can be realized as the projection of the $n$-cube $C^{n}=[-\tfrac{1}{2},\tfrac{1}{2}]^{n}$ along the all-ones vector ${\bf 1} \in \RR^{n}$ via the linear map $\pi': \RR^{n}\rightarrow \RR^{n}$ \hbox{represented by the matrix with entries} 
\[
\pi'_{ij} = \begin{cases}
    \frac{g}{g+1}, & \text{if }i=j\\
    -\frac{1}{g+1}, & \text{if }i \neq j.
\end{cases}
\]
\end{proposition}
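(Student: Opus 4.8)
The plan is to transport the description of $V_Q$ as a projection of the cube $C=C^{g+1}$ (established in the proof of Theorem~\ref{thm:vertices_voronoi_Rg}) across the isometric isomorphism $\Phi:\RR^g\to H\subset\RR^{g+1}$, and then compare the two coordinate systems on $H_1(\Gamma,\RR)$: the one coming from deleting a coordinate of $\RR^{g+1}$ (used in Theorem~\ref{thm:vertices_voronoi_Rg}) and the one coming from the cycle-basis matrix $B$. Concretely, by \eqref{eq:rho_pi_prime} the surjection $\rho$ is represented by $B$, and $B^T$ embeds $V_Q\subset\RR^g$ back into $C_1(\Gamma,\RR)\cong\RR^{g+1}$ with image inside $H_1(\Gamma,\RR)=H$. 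I would first observe that $B^T$ followed by the identification $\RR^g\cong H$ differs from the inclusion $H\hookrightarrow\RR^{g+1}$ used in Theorem~\ref{thm:vertices_voronoi_Rg} only by an element of $\mathrm{GL}_g(\ZZ)$ acting on the source (change of homology basis) combined with a sign; since both $B^T$ and $\Phi$ are injective linear maps onto $H$, the composite $B^T\circ\Phi^{-1}:H\to H$ is a linear automorphism of $H$, hence $B^T(V_Q)$ is affinely — in fact linearly — isomorphic to $\pi_H(C)$, giving combinatorial equivalence of $V_Q$, $\pi_H(C)$, and $B^T(V_Q)$ in one stroke.

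Next I would pin down the vertex coordinates. With $B$ having rows $e_1-e_{j}$ for $j=2,\dots,g+1$ (equivalently $B$ is the $g\times(g+1)$ matrix with first column $\mathbf 1$ and $-I_g$ in the remaining block), the map $\cc\mapsto B^T\cc$ sends a vertex $\aa\in[\kk]\subset\RR^g$ to a vector in $\RR^{g+1}$ whose entries are $\RR$-linear combinations of the entries of $\aa$. The clean way is to go through $H$: a cube vertex $\vv\in\{\pm\tfrac12\}^{g+1}$ with exactly $k$ entries equal to $-\tfrac12$ and $g+1-k$ entries equal to $+\tfrac12$ projects, by the formula $\pi_H(\vv)=\vv-\frac{\vv\cdot\mathbf 1}{g+1}\mathbf 1$, to the vector with $k$ entries equal to $-\tfrac12-\frac{(g+1-2k)-?}{}$ — rather, reusing the computation from Theorem~\ref{thm:vertices_voronoi_Rg} with the roles of $+$ and $-$ interchanged, the projection has $g+1-k$ entries equal to $\tfrac{k}{g+1}$ and $k$ entries equal to $-\tfrac{g+1-k}{g+1}$, which is exactly the claimed set $B^T([\kk])$. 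I would then check that this $\pi_H(\vv)$ indeed equals $B^T\aa$ for the corresponding $\aa\in[\kk]$, by verifying the single linear relation defining $\rho$ (i.e.\ that $B\cdot\pi_H(\vv)=\aa$ for the right labeling), using that $B\mathbf 1=0$ so that $B\circ\pi_H=B$ on $\RR^{g+1}$.

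Finally, for the projection statement: the proof of Theorem~\ref{thm:vertices_voronoi_Rg} already identifies $\pi_H=\pi'$ under the identification $C_1(\Gamma,\RR)\cong\RR^{g+1}$, and $\pi'$ is the orthogonal projection of $\RR^{g+1}=\RR^n$ onto $H=\mathbf 1^\perp$, which in the standard basis has matrix $I_n-\tfrac1n\mathbf 1\mathbf 1^T$, i.e.\ diagonal entries $\tfrac{n-1}{n}=\tfrac{g}{g+1}$ and off-diagonal entries $-\tfrac1n=-\tfrac1{g+1}$. Since $B^T(V_Q)=\pi_H(C^n)=\pi'(C^n)$ and the vertices of $C^n$ map onto $\bigcup_{k}B^T([\kk])$ (the two antipodal vertices $\pm\tfrac12\mathbf 1$ collapsing to the origin, exactly as in Theorem~\ref{thm:vertices_voronoi_Rg}), all three assertions follow. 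The only real bookkeeping hazard — and the step I expect to demand the most care — is getting the $+/-$ multiplicities consistent between $[\kk]$ (as stated in Theorem~\ref{thm:vertices_voronoi_Rg}, where $k$ counts $+\tfrac12$ entries of the cube vertex) and $B^T([\kk])$ (where the sign pattern is flipped because $B$ records $e_1-e_j$, introducing a minus sign on $g$ of the coordinates); I would resolve this once and for all by fixing the convention that $\aa\in[\kk]$ corresponds to the cube vertex with $k$ entries $+\tfrac12$, tracking that $e_1$ is singled out by the homology basis, and then simply reading off both coordinate vectors from $\pi_H(\vv)$ and $B\,\pi_H(\vv)$.
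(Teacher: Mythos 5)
Your proposal follows essentially the same route as the paper: both reduce everything to the cube-projection picture from the proof of Theorem~\ref{thm:vertices_voronoi_Rg}, identify $\pi'=\pi_H=I_n-\tfrac1n\mathbf 1\mathbf 1^T$, compute the projected cube vertices coordinatewise, and note that $\pm\tfrac12\mathbf 1$ collapse to the origin; your explicit justification of the combinatorial equivalence (that $B^T$ is an injective linear map of $\RR^g$ onto $H$, hence a linear isomorphism carrying the face lattice of $V_Q$ to that of $B^T(V_Q)$) fills in a detail the paper leaves implicit. One correction: the auxiliary identity ``$B\cdot\pi_H(\vv)=\aa$'' you propose to check cannot hold, since $B\,B^T\aa=Q\aa\neq\aa$; the correct verification is either $B\,\pi_H(\vv)=B\vv=Q\aa$ (which suffices because $\ker B=\RR\mathbf 1$, so $B$ restricted to $H$ is injective), or simply the direct comparison of $\pi_H(\vv)$ with $B^T\aa$ that the paper carries out. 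The sign/multiplicity bookkeeping you flag is genuine but harmless for the statement, since negation sends $B^T([\kk])$ to $B^T([{\bf n-k}])$ and the claim only concerns the union over all $k$.
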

\begin{proof}
   The shape of the vertices of the polytope $B^T(V_Q)=\text{Conv}(B^T\aa, \aa\in {\cal V}_Q)$ given in~\Cref{prop:im_proj_hypercube} follows directly from applying  $B^T$ to the vertices of $V_Q$ given in \eqref{eq:bananavorvert}.

   Let $\vv=(-\tfrac{1}{2},\dots,-\tfrac{1}{2},\tfrac{1}{2},\dots,\tfrac{1}{2})\in \RR^{n}$ be the vertex of the cube $C^n$ with $n-k$ entries equal to $-\tfrac{1}{2}$ and $k$ entries $\tfrac{1}{2}$. It is straightforward to check that $\pi'(\vv)=B^T \aa$ with $\aa$ as in \eqref{eq:bananavorvert}. All vertices in \Cref{prop:im_proj_hypercube} and \eqref{eq:bananavorvert} can be recovered by taking all permutations of the entries in $k$ for $k=1,\dots,g.$ 
   Note that $\pi'(\pm(\tfrac{1}{2},\dots, \tfrac{1}{2}))={\bf 0}$; so their image lies in the interior of $B^T(V_Q)$ and does not contribute as vertex.
\end{proof}

\subsection{Delaunay polytopes are uniform matroid polytopes}
With the combinatorics of the Voronoi polytope understood, we turn to its dual objects: the Delaunay polytopes. For banana graphs, we find these are hypersimplices whose combinatorics depends only on the equivalence class of the corresponding Voronoi vertex. 

Recall that $n=g+1$. The $g$-dimensional hypersimplex $\Delta_{k,n}\subset \RR^n$ is defined by
$$\Delta_{k,n} \, = \, \text{Conv} \biggl(\sum_{i \in I} \ee_i: I \in \binom{[n]}{k}\biggr),$$
where $\ee_i$ is the $i$-th standard basis vector in $\RR^n$.
This polytope is the matroid basis polytope of the uniform matroid $U_{k, n}$; equivalently, it is the top-dimensional matroid cell of the Grassmannian $\Gr(k, n)$. For the relevant background on matroids, we refer to \cite{oxley2006matroid}. In what follows, we show that the Delaunay polytopes associated with banana graphs are hypersimplices. 
\begin{theorem}\label{thm:BTD}
   Let $\aa\in [{\bf k}]\subset {\cal V}_Q$, and  $D_{\aa,Q}$ be the associated Delaunay polytope. Then the polytope $B^T(D_{\aa,Q})$ is combinatorially equivalent 
   to the hypersimplex $\Delta_{k, n}$.
\end{theorem}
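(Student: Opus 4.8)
The plan is to push everything into the ambient space $\RR^{n}$ using the cycle basis matrix $B$, where $D_{\aa,Q}$ becomes the intersection of a translate of the unit cube with a coordinate hyperplane, i.e.\ a hypersimplex. First, since we take $\Delta=I_{n}$, I would record that the map $B^{T}\colon(\RR^{g},\langle\cdot,\cdot\rangle_{Q})\to(H,\text{Euclidean})$ is an isometric isomorphism onto $H=H_{1}(\Gamma,\RR)=\{u\in\RR^{n}:\sum_{i}u_{i}=0\}$ carrying $\ZZ^{g}$ onto the cycle lattice $\Lambda_{H}=\ZZ^{n}\cap H$ (the $A_{g}$ root lattice), exactly as in the proof of \Cref{thm:vertices_voronoi_Rg} and in \Cref{prop:im_proj_hypercube}. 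Under this identification $\aa^{T}Q\aa=\lVert B^{T}\aa\rVert^{2}$ and $(\aa-\cc)^{T}Q(\aa-\cc)=\lVert B^{T}\aa-B^{T}\cc\rVert^{2}$, so $B^{T}(\mathcal{D}_{\aa,Q})$ is precisely the set of points of $\Lambda_{H}$ at Euclidean distance $\lVert B^{T}\aa\rVert$ from $B^{T}\aa$. As $\aa\in\mathcal{V}_{Q}$, the origin already attains $\min_{\cc}(\aa-\cc)^{T}Q(\aa-\cc)=\aa^{T}Q\aa$, so $B^{T}(\mathcal{D}_{\aa,Q})$ is exactly the set of lattice points of $\Lambda_{H}$ nearest to $B^{T}\aa$.

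Next, by \Cref{prop:im_proj_hypercube} I would write $B^{T}\aa=\pi'(\vv)$ for a vertex $\vv\in\{\pm\tfrac12\}^{n}$ of the cube $C^{n}$, where $\pi'$ is the orthogonal projection of $\RR^{n}$ onto $H$ along $\mathbf{1}$; by the explicit coordinates listed there, the set $M=\{i:(B^{T}\aa)_{i}<0\}$ of negative coordinates equals $\{i:v_{i}=-\tfrac12\}$ and has cardinality $k$, while $P=[n]\setminus M$ has cardinality $n-k$. Since $\Lambda_{H}\subset H$, Pythagoras gives $\lVert\vv-\lambda\rVert^{2}=\lVert\vv-\pi'(\vv)\rVert^{2}+\lVert\pi'(\vv)-\lambda\rVert^{2}$ for all $\lambda\in\Lambda_{H}$, so the points of $\Lambda_{H}$ nearest to $\pi'(\vv)=B^{T}\aa$ are those nearest to $\vv$. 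The points of $\ZZ^{n}$ nearest to $\vv$ are the $2^{n}$ roundings, sending each coordinate to $0$ or to $\operatorname{sign}(v_{i})$, all at squared distance $n/4$; the roundings lying in $H$ are parametrised by subsets $A_{1}\subseteq P$ (coordinates rounded up to $1$) and $B_{1}\subseteq M$ (coordinates rounded down to $-1$) with $\lvert A_{1}\rvert=\lvert B_{1}\rvert$, the corresponding point being $\ee_{A_{1}}-\ee_{B_{1}}$ where $\ee_{I}:=\sum_{i\in I}\ee_{i}$. Since $\mathbf{0}$ is one of them, the minimum over $\Lambda_{H}$ is attained exactly on this family, and therefore $B^{T}(\mathcal{D}_{\aa,Q})=\{\,\ee_{A_{1}}-\ee_{B_{1}}:A_{1}\subseteq P,\ B_{1}\subseteq M,\ \lvert A_{1}\rvert=\lvert B_{1}\rvert\,\}$.

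Finally, to identify this with a hypersimplex, I would translate by $\ee_{M}$: this turns $\ee_{A_{1}}-\ee_{B_{1}}$ into $\ee_{A_{1}}+\ee_{M\setminus B_{1}}=\ee_{J}$ with $J:=A_{1}\sqcup(M\setminus B_{1})$ and $\lvert J\rvert=\lvert A_{1}\rvert+(k-\lvert B_{1}\rvert)=k$, and $(A_{1},B_{1})\mapsto J$ is a bijection onto $\binom{[n]}{k}$ with inverse $J\mapsto(J\cap P,\ M\setminus J)$. Hence $B^{T}(\mathcal{D}_{\aa,Q})=\{\ee_{J}:J\in\binom{[n]}{k}\}-\ee_{M}$, so $B^{T}(D_{\aa,Q})=\Delta_{k,n}-\ee_{M}$ is a lattice translate of the hypersimplex, in particular affinely and combinatorially equivalent to $\Delta_{k,n}$. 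The step I expect to be the main obstacle is the bookkeeping in the second paragraph: correctly pinning down $B^{T}(\mathcal{D}_{\aa,Q})$ as the nearest points of $\Lambda_{H}$ and checking that passing from roundings in $\ZZ^{n}$ to roundings in $H$ introduces no point farther than $\tfrac12\sqrt{n}$; once that description is in hand, the final combinatorial identification is immediate. As a consistency check, the statement also follows from the classical description of the Delaunay polytopes of the root lattice $A_{g}$ as the hypersimplices.
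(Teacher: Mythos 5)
Your proof is correct, and it reaches the same explicit vertex set and the same shift vector $\ee_{M}=\mathbf{s}_{\aa}$ as the paper, but by a genuinely different route. The paper derives the description of $B^T(\DaQ)$ structurally: it invokes \Cref{cor:conv_ak} (so that $\DaQ=\aa-[\kk]$), applies $B^T$, and reads off the vertex set from the explicit coordinates of $B^T([\kk])$ in \Cref{prop:im_proj_hypercube}; the hypersimplex then appears after translating by $\mathbf{s}_{\bar\aa}$. You instead recompute the Delaunay set from scratch as a closest-vector problem: the isometry $B^T\colon(\RR^g,\langle\cdot,\cdot\rangle_Q)\to(H,\text{Euclidean})$ identifies $\ZZ^g$ with the root lattice $\ZZ^n\cap H$, the Delaunay set becomes the set of nearest lattice points to $\pi'(\vv)$ for a cube vertex $\vv$, and Pythagoras plus the rounding argument pins these down as $\{\ee_{A_1}-\ee_{B_1}\}$. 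Your approach buys independence from \Cref{cor:conv_ak} and \Cref{prop:vertexequiv} (you only need the isometry set-up of \Cref{thm:vertices_voronoi_Rg} and the sign/cardinality data from \Cref{prop:im_proj_hypercube}), and it makes transparent why the answer is the classical Delaunay polytope of the $A_g$ lattice; the paper's approach buys a shorter computation given the machinery it has already built, and the intermediate identity $\DaQ=\aa-[\kk]$ is reused elsewhere (e.g.\ in \Cref{thm:delsetismatroid}). The one step you should make explicit is that $B^T$ maps $\ZZ^g$ \emph{onto} all of $\ZZ^n\cap H$ and not merely onto a finite-index sublattice (this holds because the rows $e_1-e_{i+1}$ of $B$ generate the full root lattice), since the closest-vector argument silently uses this surjectivity; with that remark added, the argument is complete.
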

\begin{proof}
By \Cref{cor:conv_ak}, for any $\aa\in [{\bf k}]$ we have 
$${\cal D}_{\aa,Q}\, =\, \aa - [{\bf k}] \, =\, \{\aa-\aa_i, \,\, \aa_i\in[\kk]\}.$$ 
Applying $B^T$ yields $B^T(\DaQ) = B^T\aa - B^T([\kk])$, where the elements of $B^T([\kk])$ are described in \Cref{prop:im_proj_hypercube}. Let $B^T\bar{\aa}$
be the unique element of $B^T([\kk])$ whose first $n-k$ coordinates are positive. Then
\begin{equation*}
B^T(\mathcal{D}_{\bar{\aa},Q}) \, = \, \{{\bf 0}\} \,\,\bigcup\,\, \biggl\{ \sum_{\substack{i\in I\\I\subseteq[n-k]}}
\ee_i-\sum_{\substack{j\in J\\J\subseteq[n]\setminus[n-k]}} \ee_j, \quad 1\leq |I|=|J|\leq \min\{k,n-k\}\biggr\},
\end{equation*}
where, for a positive integer $m$, we write $[m] \coloneqq \{1,2,\dots,m\}$, and $\ee_i$ denotes the standard basis of $\RR^n$.
Define 
\[{\bf s}_{\bar{\aa}} = \sum_{i\in [n]\setminus[n-k]} \ee_i.\] 
A direct computation shows that the translated polytope $B^T(D_{\bar{\aa},Q}) + {\bf s}_{\bar{\aa}}$ is precisely the canonical hypersimplex $\Delta_{k, n}$. Choosing a different representative $\aa_i \in [\kk]$ simply permutes the coordinates of the vector $\mathbf{s}_{\bar{\aa}}$. In particular, $B^T(D_{\aa_i,Q}) + {\bf s}_{\aa_i}=\Delta_{k, n},$ where the vector $\mathbf{s}_{\aa_i}\in\NN^n$ has coordinates 
\begin{equation}\label{eq:s_vector}({\bf s}_{\aa_i})_{j} \,=\,
 \begin{cases}
     0 & \text{if }\,(B^T\aa_i)_j >0\\
     1 & \text{if }\,(B^T\aa_i)_j <0.
 \end{cases}  
\end{equation}
In addition, note that ${\bf s}_{\aa_i} = {\bf s}_{\bar{\aa}} + B^T\bar{\aa}-B^T\aa_i.$
\end{proof}

\begin{corollary}\label{cor:Delaunay_in_Rg}
    Let $\aa\in [\kk]$ be a vertex of the Voronoi polytope $V_{Q}$ in the form given in \eqref{eq:bananavorvert}. The Delaunay polytope $D_{\aa,Q}$ associated with $\aa$ is combinatorially equivalent to the hypersimplex 
$\Delta_{k,n}$, which is the matroid base polytope of the uniform matroid $U_{k, n}$.
\end{corollary}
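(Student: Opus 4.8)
The plan is to obtain this corollary as a direct repackaging of Theorem~\ref{thm:BTD}, using that the linear map $B^T$ does not alter the combinatorial (indeed affine) type of a polytope sitting in $\RR^g$. The corollary differs from Theorem~\ref{thm:BTD} only in that it records the statement back in $\RR^g$ rather than in the ambient $\RR^n$, so the work is to transport the equivalence across $B^T$.

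First I would record that $B$ is the $g\times n$ cycle-basis matrix of \eqref{eq:tropical_Riemann_matrix}, whose rows form a $\ZZ$-basis of $H_1(\Gamma,\ZZ)$; hence $B$ has full rank $g$ and $B^T\colon\RR^g\to\RR^n$ is injective. Consequently $B^T$ is an affine isomorphism onto its image $H_1(\Gamma,\RR)=H$, and restricting it to the polytope $D_{\aa,Q}\subset\RR^g$ identifies $D_{\aa,Q}$ with $B^T(D_{\aa,Q})$ as affinely — hence combinatorially — equivalent polytopes. This is exactly the reasoning already invoked for $V_Q$ and $B^T(V_Q)$ in \Cref{prop:im_proj_hypercube}, now applied to the sub-polytope $D_{\aa,Q}$.

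Next I would chain this with Theorem~\ref{thm:BTD}: since $\aa\in[\kk]$, that theorem produces a lattice translation ${\bf s}_{\aa}\in\NN^n$ as in \eqref{eq:s_vector} with $B^T(D_{\aa,Q})+{\bf s}_{\aa}=\Delta_{k,n}$, so $B^T(D_{\aa,Q})$ is combinatorially equivalent to $\Delta_{k,n}$. Composing the two equivalences gives $D_{\aa,Q}\cong\Delta_{k,n}$. Finally I would quote the classical fact recalled in the paragraph preceding Theorem~\ref{thm:BTD} (see also \cite{oxley2006matroid}) that $\Delta_{k,n}$ is the matroid base polytope of the uniform matroid $U_{k,n}$, its vertices $\sum_{i\in I}\ee_i$ with $I\in\binom{[n]}{k}$ being precisely the indicator vectors of the bases of $U_{k,n}$; this finishes the identification.

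There is no genuine obstacle here — the statement is a formal consequence of Theorem~\ref{thm:BTD}. The only point meriting an explicit line is the injectivity of $B^T$ (equivalently, linear independence of the rows of $B$), which guarantees that applying $B^T$ is an honest affine embedding, so that it really does transport the face lattice of $D_{\aa,Q}$ intact and the translation by ${\bf s}_{\aa}$ from the proof of Theorem~\ref{thm:BTD} is harmless.
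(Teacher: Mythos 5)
Your proposal is correct and takes essentially the same route as the paper: the paper's one-line proof likewise invokes the invertibility of the map represented by $B$ (the surjection $\rho$) restricted to the hyperplane $\{x_1+\dots+x_n=0\}$ containing $B^T(D_{\aa,Q})$, which is exactly your observation that $B^T$ is an affine isomorphism of $\RR^g$ onto that hyperplane, and then chains this with Theorem~\ref{thm:BTD}. Your extra remark on the injectivity of $B^T$ is a fair explicit justification of what the paper leaves implicit.
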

\begin{proof}
    This follows from the fact that the affine map $\rho$ defined in \Cref{sec2} is invertible, when restricted to the subspace $\{x_1+\dots+x_{n}=0\}\subset\RR^{n}$ containing the polytope $B^T(D_{\aa,Q})$.
\end{proof}

\begin{example}[Genus $2$]
    The polytope $B^T(V_Q)$ is the hexagon in $\RR^3$, contained in the subspace defined by $x_1 + x_2 + x_3 = 0$, with vertices 
     \begin{equation}\label{eq:BT(V_Q)_genus2}
    \begin{aligned}
     B^T\aa_1 &= \left(\tfrac{1}{3}, \tfrac{1}{3}, -\tfrac{2}{3}\right), &
     B^T\aa_2 &= \left(-\tfrac{2}{3}, \tfrac{1}{3}, \tfrac{1}{3}\right), &
     B^T\aa_3 &= \left(\tfrac{1}{3}, -\tfrac{2}{3}, \tfrac{1}{3}\right),&
     \\[2pt]
   B^T\aa_4 &= \left(\tfrac{2}{3}, -\tfrac{1}{3}, -\tfrac{1}{3}\right), &
   B^T\aa_5 &= \left(-\tfrac{1}{3}, \tfrac{2}{3}, -\tfrac{1}{3}\right), & 
   B^T\aa_6 &= \left(-\tfrac{1}{3}, -\tfrac{1}{3}, \tfrac{2}{3}\right).
\end{aligned}
\end{equation}
One verifies that $B^T(V_Q)$ is the image of the cube $C^3$ under the projection described in~\eqref{eq:pi_prime_example}. This is illustrated in \Cref{fig:voronoibananag3} (left). Furthermore, the Voronoi vertices in \Cref{ex:sec1_genus2_banana} give rise to the six triangular Delaunay polytopes shown in \Cref{fig:hexagon}. For each $i=1,2,3$, the Delaunay polytope $B^T (D_{\aa_i,Q}) + \mathbf{s}_{\aa_i}$ is  the hypersimplex $\textcolor{myblue}{\Delta_{1,3}}$, whereas for $i=4,5,6$ the same construction yields the hypersimplex $\textcolor{mygreen}{\Delta_{2,3}}$.
\end{example}

Notice that the polytope $B^T(D_{\aa,Q})$ can still be interpreted as the Delaunay polytope associated to the point $B^T \aa\in \RR^{n}$ with respect to the Euclidean metric induced by the identity matrix $I_n$ restricted to the sublattice $\ker \Delta_1$. 

\begin{proposition}
Let $\aa \in \RR^g$ be a Voronoi vertex of the Voronoi polytope $V_Q$ associated with the banana graph. Then $B^T \DaQ \, = \, \mathcal{D}_{B^T\aa,I_{n}} \cap \ker \Delta_1.$
\end{proposition}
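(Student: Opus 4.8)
The plan is to exhibit $B^T$ as an isometric isomorphism between lattices‑with‑quadratic‑form and to use that the Delaunay set attached to a Voronoi vertex is intrinsic to such data, the intersection with $\ker\Delta_1$ serving merely to cut $\ZZ^n$ down to the relevant rank‑$g$ sublattice. Two facts are essentially already at hand. First, since $\Delta=I_n$ we have $Q=BB^T$, so $\langle B^T\xx,B^T\yy\rangle=\xx^TQ\yy=\langle\xx,\yy\rangle_Q$ for all $\xx,\yy\in\RR^g$; as $B$ has rank $g$, the map $B^T\colon\RR^g\to\RR^n$ is an isometric embedding whose image is the span of the cycle‑basis vectors $e_1-e_{i+1}$, namely $H_1(\Gamma,\RR)=\ker\Delta_1$. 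Under this identification $B^T(V_Q)$ is the Euclidean Voronoi cell of the sublattice $\Lambda_H\coloneqq\ZZ^n\cap\ker\Delta_1$ inside $\ker\Delta_1$ — this is exactly what the proofs of \Cref{thm:vertices_voronoi_Rg} and \Cref{prop:im_proj_hypercube} establish — so $B^T\aa$ is the Voronoi vertex of $\Lambda_H$ corresponding to $\aa$. Second, $B^T$ restricts to a lattice isomorphism $\ZZ^g\xrightarrow{\ \sim\ }\Lambda_H$: its columns are the chosen integral basis $e_1-e_{i+1}$, $i=1,\dots,g$, of $H_1(\Gamma,\ZZ)$, which form a $\ZZ$‑basis of $\{\,u\in\ZZ^n:\sum_iu_i=0\,\}=\Lambda_H$.

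Granting these, the equality reduces to a translation of the conditions defining the two Delaunay sets as in \eqref{eq:Delaunay_set}. For $\cc\in\ZZ^g$ the isometry gives $\aa^TQ\aa=\|B^T\aa\|^2$ and $(\aa-\cc)^TQ(\aa-\cc)=\|B^T\aa-B^T\cc\|^2$, so $\cc\in\DaQ$ if and only if the integer vector $B^T\cc$ satisfies the corresponding equality defining $\mathcal{D}_{B^T\aa,I_n}$; moreover $B^T\cc\in\ker\Delta_1$ holds automatically because $\operatorname{im}B^T=\ker\Delta_1$. This gives $B^T\DaQ\subseteq\mathcal{D}_{B^T\aa,I_n}\cap\ker\Delta_1$. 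Conversely, any $\mathbf d\in\mathcal{D}_{B^T\aa,I_n}\cap\ker\Delta_1$ lies in $\ZZ^n\cap\ker\Delta_1=\Lambda_H$, hence equals $B^T\cc$ for a unique $\cc\in\ZZ^g$ by the lattice isomorphism; running the equivalence backwards shows $\cc\in\DaQ$, so $\mathbf d\in B^T\DaQ$, which proves the reverse inclusion and the claimed equality.

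The only step carrying genuine content is the surjectivity $B^T(\ZZ^g)=\Lambda_H$, i.e.\ that every integral point of $\ker\Delta_1$ is the image of an element of $\ZZ^g$, and not merely of $\RR^g$. This amounts to the primitivity (saturatedness) of $H_1(\Gamma,\ZZ)=\ker(\partial\colon\ZZ^n\to\ZZ^{|V|})$ inside $C_1(\Gamma,\ZZ)=\ZZ^n$; for the banana graph it is the elementary basis verification noted above. I expect this to be the main, albeit mild, obstacle — everything else is a formal consequence of $B^T$ being an isometric lattice isomorphism onto $(\Lambda_H,\ \text{Euclidean})$ that sends the Voronoi vertex $\aa$ to the Voronoi vertex $B^T\aa$, hence Delaunay sets to Delaunay sets.
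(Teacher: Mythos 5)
Your proof is correct, and for the harder (reverse) inclusion it takes a genuinely different route from the paper. Both arguments use the same starting point, namely that $B^T$ is an isometric embedding of $(\RR^g,\langle\cdot,\cdot\rangle_Q)$ into $(\RR^{n},\text{Euclidean})$ with image $H_1(\Gamma,\RR)=\ker\Delta_1$, which settles $B^T\DaQ\subseteq \mathcal{D}_{B^T\aa,I_{n}}\cap\ker\Delta_1$ by transporting the defining equality of \eqref{eq:Delaunay_set}. For the reverse inclusion the paper argues in coordinates: it writes $B^T\bar{\aa}=\pi'(\vv)$ for an explicit cube vertex $\vv$, solves the equidistance condition together with the zero-sum constraint to show that any $\tilde{\cc}\in\mathcal{D}_{B^T\bar{\aa},I_n}\cap\ker\Delta_1$ has entries in $\{0,1\}$ on the first $n-k$ slots and in $\{0,-1\}$ on the rest with balanced counts, and then matches this against the explicit description of $B^T\DaQ$ obtained in the proof of Theorem~\ref{thm:BTD}. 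You instead isolate the single structural fact that makes the statement formal: $B^T$ restricts to a $\ZZ$-lattice isomorphism $\ZZ^g\to\Lambda_H=\ZZ^{n}\cap\ker\Delta_1$, which you verify correctly since any $u\in\ZZ^n$ with $\sum_i u_i=0$ equals $\sum_{i\ge 2}(-u_i)(\tilde\ee_1-\tilde\ee_i)$. Granting this, every integral point of $\mathcal{D}_{B^T\aa,I_n}\cap\ker\Delta_1$ pulls back to a unique $\cc\in\ZZ^g$ and the isometry runs the equivalence backwards. Your version buys generality and brevity: it avoids the banana-specific coordinate computation and would apply to any graph once one knows that the chosen cycle basis spans $H_1(\Gamma,\ZZ)=\ker(\partial)\cap\ZZ^m$ over $\ZZ$ (automatic for a spanning-tree basis), whereas the paper's computation has the side benefit of re-deriving the explicit hypersimplex description of the Delaunay set. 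You correctly identified the saturation/surjectivity step as the only point with real content.
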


\begin{proof}
  Let $\cc\in \DaQ$. A direct computation shows that $B^T\cc \in \mathcal{D}_{B^T\aa,I_{n}}$. Furthermore, by~\eqref{eq:rho_pi_prime} we have $\pi' \circ B^T = B^T$, hence $B^T\cc\in \ker\Delta_1.$
  
For the reverse inclusion, recall that $B^T\aa = \pi' (\vv)$, where $\vv$ is a vertex of the $n$-dimensional hypercube from~\eqref{eq:cube}. Let $B^T\bar{\aa}\in B^T([\kk])$ be the element whose first $n-k$ coordinates are positive; then $B^T\bar{\aa}=\pi'(\vv)$, where
$$\vv\, =\, \Bigg( \underbrace{ \frac{1}{2}, \dots,  \frac{1}{2}}_{n-k \text{ times}}, \underbrace{-\frac{1}{2},\dots, -\frac{1}{2}}_{k \text{ times}}\Bigg).$$
Let $\tilde{\cc}\in \mathcal{D}_{B^T{\bar{\aa}},I_{n}}\cap \ker \Delta_1$. Then $\tilde{\cc}\in {\cal D}_{\pi'(\vv),I_{n}}$ and, since $\tilde{\cc}=\pi'(\tilde{\cc})$, we have $\tilde{\cc} = (\tilde{c}_1,\tilde{c}_2,\dots, \tilde{c}_n)$  with
$$\tilde{c_i}\in \{0,1\}\quad  \text{for } i\in [n-k],\quad  \text{and} \quad \tilde{c_j}\in \{0,-1\} \quad \text{for } j\in [n]\setminus[n-k],$$
and the additional condition that the number of positive entries equals the number of negative entries. As shown in the proof of \Cref{thm:BTD}, this precisely characterizes the set of vectors in $B^T \DaQ$. This concludes the proof.
\end{proof}

\subsection{Strongly connected orientations and matroids}

The combinatorial structures described so far have a natural interpretation in terms of graph orientations, which we discuss in this section. Our goal here is to use the canonical bijection between equivalence classes of Voronoi vertices and equivalence classes of strongly connected orientations to associate a matroid with each vertex of the graph $\Gamma$.

\begin{definition}
    Let $(\Gamma,\iota)$ be a connected oriented metric graph. The orientation $\iota$ is \textit{strongly connected} if any pair of vertices $u,v$ are connected by an oriented path from $u$ to $v$ and an oriented path from $v$ to $u$. We denote by $\mathcal{O}(\Gamma)$ the set of strongly connected orientations\hbox{ of $\Gamma$.}
\end{definition}

For the class of banana graphs considered here, an orientation is strongly connected if and only if at every vertex $v$, there exist edges $e,e'$  such that $e_+=v$ and $e'_-=v.$

The next result gives an explicit bijection between the strongly connected orientations of a banana graph $\Gamma$ and the set of vertices of its Voronoi polytope $V_Q$. A general version of this correspondence appears in \cite{amini2010lattice}, where the face poset of a Voronoi cell is shown to be isomorphic to the poset of strongly connected orientations for any finite connected graph. 

\begin{theorem}\label{thm:bijection}
    Let $(\Gamma,\iota)$ be an oriented banana graph of genus $g$. Then there exists a canonical bijection 
    between the set of vertices $\mathcal{V}_Q$ and the set $\mathcal{O}(\Gamma)$ of strongly connected orientations of $\Gamma$ given by
    \begin{align}\label{eq:phi}
    \phi\,:\,\mathcal{V}_Q&\to \mathcal{O}(\Gamma)\\
    \,\,\,\aa\,\,\, &\mapsto \iota_\aa(e_i) = \begin{cases}
        \iota(e_i) &\text{if } (B^T\aa)_i>0,\\
        \iota(-e_i) & \text{if } (B^T\aa)_i<0.\nonumber
    \end{cases}
    \end{align}
\end{theorem}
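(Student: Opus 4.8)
The plan is to verify three things: that $\phi$ is well-defined (i.e.\ every vertex $\aa\in\mathcal V_Q$ does produce a \emph{strongly connected} orientation), that $\phi$ is injective, and that it is surjective. For well-definedness, fix $\aa\in[\kk]$. By \Cref{prop:im_proj_hypercube}, the vector $B^T\aa$ has exactly $n-k$ coordinates equal to the positive value $\tfrac{k}{g+1}$ and $k$ coordinates equal to the negative value $-\tfrac{g+1-k}{g+1}$; in particular no coordinate of $B^T\aa$ vanishes, so $\iota_\aa$ is genuinely defined on all edges. Since $1\le k\le g$ we have $1\le k\le n-1$, so there is at least one edge with $(B^T\aa)_i>0$ and at least one with $(B^T\aa)_j<0$. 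Under the reference orientation $\iota_0$ every edge runs from $v_2$ to $v_1$, so flipping edge $e_i$ (i.e.\ choosing $\iota(-e_i)$) makes it run from $v_1$ to $v_2$. Thus $\iota_\aa$ has both an edge entering $v_1$ (any unflipped edge) and an edge leaving $v_1$ (any flipped edge), and symmetrically at $v_2$; by the criterion stated just before the theorem for banana graphs, $\iota_\aa$ is strongly connected. Hence $\phi(\aa)\in\mathcal O(\Gamma)$.

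Next I would prove injectivity. The key point is that the sign pattern $\operatorname{sign}(B^T\aa)\in\{+,-\}^n$ determines $\iota_\aa$, so it suffices to show the assignment $\aa\mapsto \operatorname{sign}(B^T\aa)$ is injective on $\mathcal V_Q$. By \Cref{prop:im_proj_hypercube} the vertex $B^T\aa$ is a permutation of the vector with $n-k$ entries $\tfrac{k}{g+1}$ and $k$ entries $-\tfrac{g+1-k}{g+1}$; its sign vector records precisely \emph{which} $k$-subset $J\subseteq[n]$ of coordinates is negative, together with the cardinality $k$ (which is recoverable as $|J|$). Since the map sending a vertex of $V_Q$ to the subset $J$ of negative coordinates of $B^T\aa$ is a bijection onto $\bigcup_{k=1}^g\binom{[n]}{k}$ (distinct subsets give distinct vertices, by the explicit description of vertices), $\aa\mapsto\operatorname{sign}(B^T\aa)\mapsto\iota_\aa$ is injective.

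For surjectivity, take any $\iota'\in\mathcal O(\Gamma)$. It differs from $\iota_0$ by flipping some subset $J\subseteq[n]$ of edges (namely those $e_i$ with $\iota'(e_i)=\iota(-e_i)$). Strong connectivity forces, as above, that at $v_1$ some edge is incoming and some outgoing, i.e.\ $J\ne\emptyset$ and $J\ne[n]$, so $k:=|J|\in\{1,\dots,g\}$. Now take the vertex $\aa\in[\kk]$ whose $B^T\aa$ (a permutation of the model vector) has exactly the coordinates indexed by $J$ negative — such a permutation exists and is a vertex by \Cref{prop:im_proj_hypercube}. Then $(B^T\aa)_i<0$ iff $i\in J$ iff $\iota'$ flips $e_i$, so $\iota_\aa=\iota'$, proving $\phi(\aa)=\iota'$. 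Combined with injectivity this gives the bijection; one can also double-check the count, since $|\mathcal V_Q|=\sum_{k=1}^g\binom{n}{k}=2^{n}-2=2(2^g-1)$ by \Cref{thm:vertices_voronoi_Rg}, matching the number of subsets $\emptyset\ne J\subsetneq[n]$, which is exactly $|\mathcal O(\Gamma)|$ for the banana graph.

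I do not expect a serious obstacle here: the proof is essentially bookkeeping once \Cref{prop:im_proj_hypercube} is in hand. The one place requiring a little care is the word ``canonical'' — to make the bijection canonical (independent of the chosen homology basis $B$ and of the ordering of the edges) one should note that $B^T$ is, up to the identifications of \Cref{sec:OSbackground}, the inclusion $H_1(\Gamma,\RR)\hookrightarrow C_1(\Gamma,\RR)\cong\RR^n$, so the sign pattern of $B^T\aa$ is intrinsic to the cycle $\aa$ viewed inside the edge space, and flipping an edge according to that sign is a basis-free operation. I would add a sentence to that effect rather than a separate argument.
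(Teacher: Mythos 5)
Your proof is correct and follows essentially the same route as the paper's: both arguments reduce the bijection to the observation that the sign pattern of $B^T\aa$ (equivalently, of the cube vertex $\vv$ with $\pi'(\vv)=B^T\aa$) ranges exactly over the $2^{n}-2$ sign vectors with at least one positive and one negative entry, which are precisely the strongly connected orientations of the banana graph. Your write-up merely spells out the injectivity, surjectivity, and well-definedness steps that the paper compresses into ``it follows directly from the description of $C^n$.''
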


\begin{proof}
For clarity, we fix once and for all an initial orientation $\iota_0$ as in~\Cref{fig:bananagen3}. By \eqref{eq:rho_pi_prime}, we have $B^T(V_Q) = \pi'(C^{n})$, so the vertices of $B^T(V_Q)$ are the vectors $\pi'(\vv)$ with $\vv\in C^n.$ For a vector ${\bf w}$, let $\sigma({\bf w})$ denote the vector of signs its coordinates. Note that $\sigma(\pi'(\vv))=\sigma(\vv)$ for all $\vv\in C^n,$ and every such signature contains at least one positive and one negative entry, and no zero entries. Given $\aa\in \mathcal{V}_Q$, choose $\cc\in C^n$ such that $B^T\aa=\pi'(\cc)$. Then $\sigma(B^T\aa)=\sigma(\cc)$. Define the orientation $\iota_\aa$ via~\eqref{eq:phi}, namely $\sigma(B^T\aa)_i=\text{sign}(\langle B^T\aa,e_i\rangle)$ determines whether the edge $e_i$ is oriented as $\iota_0(e_i)$ or as its reverse.

It follows directly from the description of $C^n$ that $\phi$ sets up a bijection between $\mathcal{V}_Q$ and ${\cal O}(\Gamma$). Finally, it is straightforward to verify that the resulting orientation $\iota_\aa$ is  independent of the choice of the initial orientation $\iota_0$, and thus the bijection is canonical. 
\end{proof}

The next result translates the equivalence relation (under Delaunay-translation) in ~\Cref{eq:equivalence_Voronoi_vertices} on the set of Voronoi vertices ${\cal V}_Q$ to the set of strongly connected orientations~${\cal O}(\Gamma)$. 

\begin{corollary}\label{cor:circuits}
    Two vertices $\aa,\aa'\in {\cal V}_Q$ are equivalent, that is $\aa\sim \aa'$ with $\aa,\aa'\in[\kk]$ for some $k\in[g]$, if and only if the corresponding orientations $\iota_{\aa}$ and $\iota_{\aa'}$ have $k$ outgoing edges at $v_1$, equivalently the edges which change orientation in $\iota_{\aa'}$ from $\iota_{\aa}$ form a circuit in $\iota_{\aa}$.
\end{corollary}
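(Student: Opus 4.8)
The plan is to reduce everything to the explicit coordinate description of Voronoi vertices in \Cref{thm:vertices_voronoi_Rg} together with the orientation map $\phi$ from \Cref{thm:bijection}, and then observe that ``edges outgoing at $v_1$'' is precisely counted by a sign pattern of $B^T\aa$. Concretely, recall that with the initial orientation $\iota_0$ we have $\iota_0(e_i)=(v_2,v_1)$, so in $\iota_0$ every edge is outgoing at $v_2$ and incoming at $v_1$. By \eqref{eq:phi}, the edge $e_i$ keeps the orientation $\iota_0(e_i)$ when $(B^T\aa)_i>0$ and is reversed when $(B^T\aa)_i<0$. Thus $e_i$ is outgoing at $v_1$ in $\iota_\aa$ exactly when $(B^T\aa)_i<0$. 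By \Cref{prop:im_proj_hypercube}, if $\aa\in[\kk]$ then $B^T\aa$ is a permutation of the vector with $n-k$ entries equal to $\tfrac{k}{g+1}>0$ and $k$ entries equal to $-\tfrac{g+1-k}{g+1}<0$; hence $\iota_\aa$ has exactly $k$ edges outgoing at $v_1$ (and, by the two-vertex structure, $g+1-k$ edges outgoing at $v_2$). Combining this with \Cref{prop:vertexequiv} — which says $\aa\sim\aa'$ iff $\aa,\aa'\in[\kk]$ for the same $k$ — gives the first equivalence: $\aa\sim\aa'$ iff $\iota_\aa$ and $\iota_{\aa'}$ have the same number $k$ of outgoing edges at $v_1$.

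For the second characterization, I would argue directly with the cycle-basis matrix $B$. The set of edges whose orientation differs between $\iota_\aa$ and $\iota_{\aa'}$ is $S=\{i : \operatorname{sign}(B^T\aa)_i\neq \operatorname{sign}(B^T\aa')_i\}$. Using \Cref{cor:conv_ak} (equivalently \Cref{prop:preequiv}), $\aa\sim\aa'$ means $\aa'=\aa-\cc_0$ for some $\cc_0\in\DaQ$, and by the proof of \Cref{thm:BTD} the vector $B^T\cc_0 = B^T\aa-B^T\aa'$ is (up to the zero vector, which only occurs when $\aa=\aa'$) of the form $\sum_{i\in I}\ee_i-\sum_{j\in J}\ee_j$ with $|I|=|J|$, $I\cap J=\emptyset$; its support $I\cup J$ is exactly $S$. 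Since every coordinate of $B^T\aa$ and $B^T\aa'$ is $\pm\tfrac1{g+1}$-type and their difference on $S$ has absolute value $1$, a coordinate $i$ lies in $S$ precisely when the sign flips, so $S=I\sqcup J$; and the edges in $I$ are those flipping from outgoing-at-$v_1$ (negative) to outgoing-at-$v_2$ (positive) while those in $J$ flip the other way, with $|I|=|J|$. I then need to identify this set $S$ with a circuit of $\iota_\aa$. In the banana graph the circuits of the graphic matroid are exactly the $2$-element edge sets $\{e_p,e_q\}$ up to... no — more precisely, the \emph{cycles} (as subsets of $E$ supporting a nonzero element of $H_1(\Gamma,\ZZ)$) are the pairs $\{e_p,e_q\}$ together with... here I should be careful: a ``circuit'' in $\iota_\aa$ as used in \Cref{cor:circuits} means a coherently oriented cycle, i.e.\ a subset of edges carrying a cycle all of whose edges agree in orientation along the cycle. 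For a banana graph a directed cycle consists of one edge from $v_1$ to $v_2$ and one from $v_2$ to $v_1$; more generally a coherent union of such is the edge set $I\sqcup J$ where the $I$-edges all point $v_2\to v_1$ and the $J$-edges all point $v_1\to v_2$ in $\iota_\aa$ (with $|I|=|J|$ so that the $0$-chain boundary vanishes). This is exactly the support of $B^T\aa-B^T\aa'$: the edges of $S$ that are negative for $\aa$ (outgoing at $v_1$, i.e.\ the $J$ above after reversal bookkeeping) versus positive for $\aa$, and after switching along $S$ one lands on $\iota_{\aa'}$.

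Thus the step-by-step plan is: (1) translate ``outgoing at $v_1$'' into the sign pattern of $B^T\aa$ using $\iota_0$ and \eqref{eq:phi}; (2) apply \Cref{prop:im_proj_hypercube} to read off that $\aa\in[\kk]$ forces exactly $k$ negative coordinates, hence $k$ outgoing edges at $v_1$, and invoke \Cref{prop:vertexequiv} for the first ``iff''; (3) identify the flip-set $S$ with the support of $B^T\aa-B^T\aa' = B^T\cc_0$ and use the structure of $\DaQ$ from the proof of \Cref{thm:BTD} to see $B^T\cc_0 = \sum_{i\in I}\ee_i - \sum_{j\in J}\ee_j$ with $|I|=|J|$; (4) recognise the edge set $I\sqcup J$, with the orientation it carries inside $\iota_\aa$, as a coherently oriented cycle, i.e.\ a circuit of $\iota_\aa$, and conversely that switching along any such circuit of $\iota_\aa$ yields another strongly connected orientation lying in the class $\phi^{-1}([\kk])$. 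The main obstacle I anticipate is step (4): making precise the (essentially bookkeeping) claim that ``$S$ is a circuit of $\iota_\aa$'' in the sense the corollary intends — one has to pin down the convention for ``circuit'' (an element of the circuit \emph{signature} of the oriented matroid of $\iota_\aa$, equivalently a coherently directed cycle) and check that the $\pm1$ pattern of $B^T\cc_0$ on $S$ matches the signs that make that cycle coherent in $\iota_\aa$; once the conventions are aligned this is immediate from $|I|=|J|$ and the fact that in a banana graph every element of $H_1$ with $0/\pm1$ coefficients corresponds to such a coherent cycle. Everything else is a routine sign computation.
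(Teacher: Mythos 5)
Your proposal is correct and follows essentially the same route as the paper: the first equivalence is obtained from the bijection of \Cref{thm:bijection} (read off via the sign pattern of $B^T\aa$) together with \Cref{prop:vertexequiv}, and the set of flipped edges is identified with a balanced set having equally many edges outgoing from each vertex, i.e.\ a coherently oriented cycle, which is exactly the paper's argument (your version merely makes the bookkeeping explicit through $B^T\cc_0$ and the description of $\DaQ$ from the proof of \Cref{thm:BTD}). The only quibble is a harmless transposition in your labelling of $I$ and $J$: with $B^T\cc_0=B^T\aa-B^T\aa'=\sum_{i\in I}\ee_i-\sum_{j\in J}\ee_j$, the coordinates in $I$ are positive for $\aa$, hence outgoing at $v_2$ in $\iota_\aa$ and flipping to outgoing at $v_1$ in $\iota_{\aa'}$ (the opposite of what you wrote), but since $|I|=|J|$ this does not affect the argument.
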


\begin{proof}
The first part of the statement is an immediate application of the bijection presented in Theorem \ref{thm:bijection}, which shows that $\aa,\aa'\in [\kk]$ if and only if $\iota_{\aa}$ and $\iota_{\aa'}$ have $k$ outgoing edges. Since $\aa, \aa'$ are distinct, there must be an outgoing edge in  $\iota_{\aa}$ which is incoming in $\iota_{\aa'}$ and vice versa, Moreover, the edges that switch orientations in $\iota_{\aa'}$ versus $\iota_{\aa}$ must have an equal number outgoing from $v_1$ as incoming. Together these edges make a circuit in $\iota_{\aa}$ which changes orientation in $\iota_{\aa'}$. On the other hand, given a circuit in $\iota_{\aa}$, it consists of an equal number of incoming and outgoing edges from $v_1$. Swapping the orientation of this circuit maintains the number of edges outgoing from $v_1$, and thus the equivalence class.
\end{proof}

An immediate consequence of~\Cref{cor:circuits} is that, given a vertex $\aa\in[{\bf k}]$, the number $k$ not only determines that $\DaQ$ is combinatorially equivalent to the hypersimplex $\Delta_{k,n},$ but also encodes the number of outgoing edges in the corresponding strongly connected orientation $\iota_\aa$: there are $k$ edges leaving $v_1$ and $n-k$ leaving $v_2$ (equivalently, $n-k$ entering $v_1$). This property holds true for every element of the equivalence class $[\kk]$.

We now assign a matroid $\mathcal{M}_{\aa,v_i}$ to each representative $\aa\in [\kk]$ and to each choice of vertex $v_i$ of the graph $\Gamma$ for $i=1,2$. These matroids are defined via their sets of bases as follows. Let ${\bf s}_\aa$ be the shift vector defined in \eqref{eq:s_vector}. For the vertex $v_1$, we define the matroid $\mathcal{M}_{\aa,v_1}$ on the ground set $[n]$ with the set of bases

\begin{equation} \label{eq:matroid_v1}
\mathcal{B}_{\aa,v_1} \,\coloneqq\, \left\{ I \in \binom{[n]}{k}\,\,|\,\,  \exists \cc\in \DaQ \text{ such that } (B^T\cc + \mathbf{s}_{\aa})_i = 1 \text{ for all } i \in I \right\}.
\end{equation}
Dually, for the vertex $v_2$, the matroid $\mathcal{M}_{\aa,v_2}$ has as its bases the complements of the above, given by
\begin{equation} \label{eq:matroid_v2}
\mathcal{B}_{\aa,v_2} \,\coloneqq\, \left\{ \bar{I} \in \binom{[n]}{n-k}\,\,|\,\,  \exists \cc\in \DaQ \text{ such that } (B^T\cc + \mathbf{s}_{\aa})_i = 0 \text{ for all } i \in I \right\},
\end{equation}
where $\bar{I}=[n]\setminus I$. Clearly, from~\Cref{thm:BTD}, we have that, for $\aa\in[{\bf k}]$, the matroid $\mathcal{M}_{\aa,v_1}$ (resp. $\mathcal{M}_{\aa,v_2}$) is isomorphic to $U_{k,n}$ (resp. $U_{n-k,n}$).

In the special cases $k=1$ or $k=g$, the strongly connected orientations coincide with the \emph{perfect orientations} in the Postnikov plabic graph, see \cite{Pos}, obtained by identifying the vertex $v_1$ (or, dually $v_2$) with the boundary of the disk and coloring the vertex $v_2$ (or, dually $v_1$) white or black, respectively.
More generally, our choice of matroid bases is guided from the analogy with Postnikov's \textit{boundary measurement map}, when applicable. In the setting of \cite{Pos}, oriented planar bicolored graphs with $k$ outgoing edges and $n-k$ incoming edges at the boundary of the disk describe points in the \textit{totally nonnegative Grassmannian} $\Gr(k,n)_{\geq 0}$.

Given a vertex $\aa\in[{\bf k}]\subset {\cal V}_Q$, let $(\Gamma,\iota_\aa)$ be the oriented banana graph described by the bijection in \eqref{eq:phi}. In the next theorem, we describe how to think of the bases of the matroid $\mathcal{M}_{\aa,v_1}$ in terms of the Delaunay points associated to $\aa$.

\begin{theorem}\label{thm:delsetismatroid}
    Let $\aa\in [{\bf k}] \subset \mathcal{V}_Q$, and $\iota_\aa$ the strongly connected orientation of $\Gamma$ associated with it. Then the Delaunay points associated to $\aa$ are in bijection with elements of $[{\bf k}]$ and bases of the matroid $\mathcal{M}_{\aa,v_1}$.
\end{theorem}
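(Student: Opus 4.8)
The plan is to read the theorem as a bookkeeping consequence of \Cref{cor:conv_ak} and of the normal form $B^T(D_{\aa,Q})+\mathbf{s}_\aa=\Delta_{k,n}$ extracted from the proof of \Cref{thm:BTD}; no new geometric input is needed. First I would produce the bijection with $[\kk]$: by \Cref{cor:conv_ak} we have $\mathrm{Conv}([\aa])=\aa-D_{\aa,Q}$, so passing to vertices gives $\mathcal{D}_{\aa,Q}=\aa-[\kk]$, and the affine map $\cc\mapsto \aa-\cc$ restricts to a bijection $\mathcal{D}_{\aa,Q}\xrightarrow{\sim}[\kk]$; in particular $|\mathcal{D}_{\aa,Q}|=|[\kk]|=\binom{n}{k}$.

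Next I would produce the bijection with $\mathcal{B}_{\aa,v_1}$. The proof of \Cref{thm:BTD} shows that for every $\cc\in\mathcal{D}_{\aa,Q}$ the vector $B^T\cc+\mathbf{s}_\aa$ lies in $\{0,1\}^n$ and has exactly $k$ ones, and that as $\cc$ ranges over $\mathcal{D}_{\aa,Q}$ these vectors are precisely the vertices $\sum_{i\in I}\ee_i$ of $\Delta_{k,n}$, $I\in\binom{[n]}{k}$. Setting $I_\cc\coloneqq\{i\in[n]\colon (B^T\cc+\mathbf{s}_\aa)_i=1\}$, we get $B^T\cc+\mathbf{s}_\aa=\sum_{i\in I_\cc}\ee_i$ with $|I_\cc|=k$; the assignment $\cc\mapsto I_\cc$ is injective because $B^T$ is injective (the cycle-basis matrix $B$ has full rank $g$), and it is surjective onto $\binom{[n]}{k}$ by the cardinality count $|\mathcal{D}_{\aa,Q}|=\binom{n}{k}$. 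Since $|I_\cc|=k=|I|$, the condition in \eqref{eq:matroid_v1} that $(B^T\cc+\mathbf{s}_\aa)_i=1$ for all $i\in I$ forces $I=I_\cc$; hence $\mathcal{B}_{\aa,v_1}=\{I_\cc\colon\cc\in\mathcal{D}_{\aa,Q}\}=\binom{[n]}{k}$ (recovering $\mathcal{M}_{\aa,v_1}\cong U_{k,n}$), and $\cc\mapsto I_\cc$ is the sought bijection $\mathcal{D}_{\aa,Q}\xrightarrow{\sim}\mathcal{B}_{\aa,v_1}$. Composing the two bijections gives the commuting triangle relating $[\kk]$, $\mathcal{D}_{\aa,Q}$ and $\mathcal{B}_{\aa,v_1}$.

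There is no genuine obstacle; the only points needing care are: (i) \eqref{eq:matroid_v1} a priori constrains only $k$ of the coordinates of $B^T\cc+\mathbf{s}_\aa$, but the fact that $B^T\cc+\mathbf{s}_\aa\in\{0,1\}^n$ with exactly $k$ ones (from the proof of \Cref{thm:BTD}) upgrades ``$=1$ on a $k$-set $I$'' to ``$=\sum_{i\in I}\ee_i$'', which is what makes $\cc\mapsto I_\cc$ well defined and injective; and (ii) one should check the argument does not depend on using the distinguished representative $\bar\aa$ with $B^T\bar\aa$ positive on its first $n-k$ entries, but the proof of \Cref{thm:BTD} already records that changing the representative merely permutes the coordinates of $\mathbf{s}_\aa$, so $B^T(D_{\aa,Q})+\mathbf{s}_\aa=\Delta_{k,n}$ holds for every $\aa\in[\kk]$ and the argument runs verbatim.
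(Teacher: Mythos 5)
Your proof is correct, but it takes a genuinely different route from the paper's. The paper proves this theorem by passing through graph orientations: it invokes \Cref{cor:circuits} together with an external correspondence between Delaunay points and circuits (the reference cited as froman) to identify the Delaunay points of $\aa$ with the strongly connected orientations of $\Gamma$ having exactly $k$ edges outgoing from $v_1$, and then identifies those orientations with the bases of $U_{k,n}=\mathcal{M}_{\aa,v_1}$. You instead bypass the orientation language entirely and argue in coordinates: \Cref{cor:conv_ak} gives the bijection $\mathcal{D}_{\aa,Q}\xrightarrow{\sim}[\kk]$, $\cc\mapsto\aa-\cc$, and the normal form $B^T(D_{\aa,Q})+\mathbf{s}_\aa=\Delta_{k,n}$ from the proof of \Cref{thm:BTD}, together with injectivity of $B^T$ and the count $|\mathcal{D}_{\aa,Q}|=\binom{n}{k}$, gives the bijection $\cc\mapsto I_\cc$ onto $\mathcal{B}_{\aa,v_1}=\binom{[n]}{k}$. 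Your version is more self-contained (it needs no external circuit--Delaunay correspondence) and your point (i), that the defining condition in \eqref{eq:matroid_v1} pins down $I=I_\cc$ because $B^T\cc+\mathbf{s}_\aa$ has exactly $k$ ones, is a detail the paper leaves implicit. What the paper's route buys is the conceptual link to strongly connected orientations, which is exactly the mechanism that survives in the general (non-banana) setting foreshadowed by the Delaunaytroid definition, whereas your coordinate argument leans on the uniform-matroid structure specific to banana graphs.
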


\begin{proof}
    As a consequence of Corollary \ref{cor:circuits} and the correspondence between Delaunay points and circuits proved in \cite{froman}, we find that the Delaunay points associated to $\aa$ are in bijection with strongly connected orientations of $\Gamma$ which have exactly $k$ edges outgoing from $v_1$. It then follows that this is in bijection with the set of bases of $U_{k, n} = \mathcal{M}_{\aa,v_1}$.
\end{proof}

We remark that Theorem \ref{thm:delsetismatroid} may not seem profound, as the correspondence between vertices in an equivalence class and bases of the associated matroid is rather obvious. This is a feature of our restriction to banana graphs. The more general version of this theorem, which we leave to forthcoming work, is more delicate but carries the same spirit and methods of proof. It is beyond the scope of this paper, however we include the following definition to foreshadow the general statement.

\begin{definition}
    Let $\Gamma$ be a metric graph and $\iota$ a strongly connected orientation. Let $\mathcal{O}$ be the set of strongly connected orientations of $\Gamma$ which have the same in and out degrees at each vertex as $\iota$. Fix a vertex $v$ of $\Gamma$, and label the edges adjacent to $v$ by $1 \dots \ell$.

    The \emph{Delaunaytroid} $\mathcal{D}_{\Gamma, v, \iota}$ is the matroid whose set of bases is $\{B_\mathcal{o}: \mathcal{o} \in \mathcal{O}\}$, where $B_\mathcal{o}$ is the set of labels of edges outgoing from $v$ in the orientation $\mathcal{o}$.
\end{definition}

\begin{example}[Genus $2$]
 We can label the vertices of the Voronoi polytope $V_Q$ with the sign patterns given by the corresponding vertices in $B^T(V_Q)$ described in \eqref{eq:BT(V_Q)_genus2}. The sign pattern induces a strongly connected orientation of the corresponding banana graph as explained in \Cref{thm:bijection} and illustrated in \Cref{fig:hexagon_orientations} (right).
\begin{figure}[h]
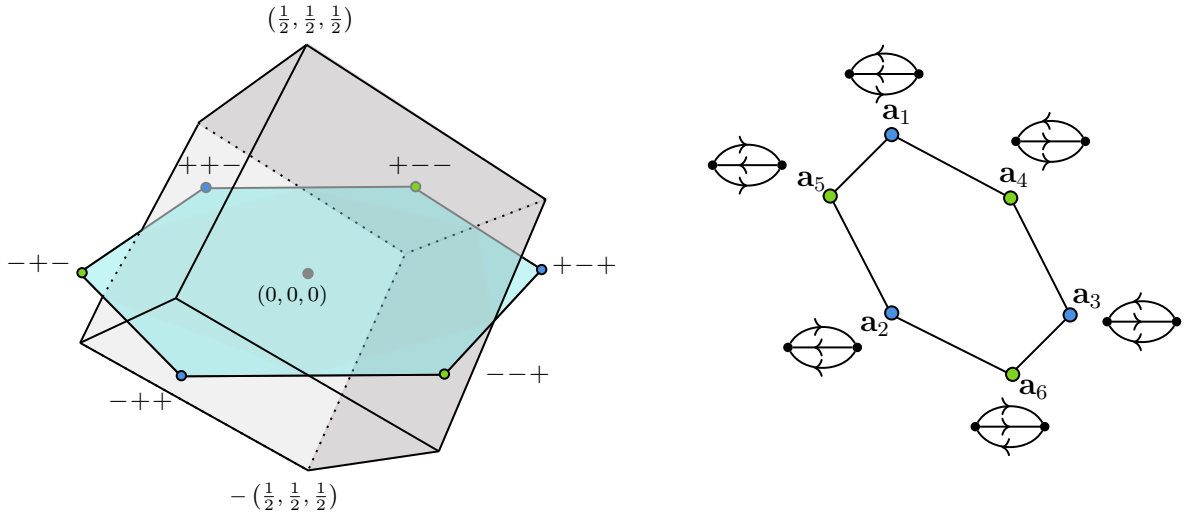

\captionsetup{width=.8\linewidth, font=small}
  \begin{subfigure}{.5\textwidth}
  \centering    
  \includegraphics[]{projection.tikz}
  \end{subfigure}
  \begin{subfigure}{.5\textwidth}
  \centering
\includegraphics[]{hexagon2_orientations.tikz}
  \end{subfigure}
  \caption{Left: the hexagon $B^T(V_Q) \subset \mathbb{R}^3$ associated with the graph $\Gamma_2$, obtained as the projection of the cube $C^3$ (in gray) via the map in \eqref{eq:pi_prime_example}. The cube's vertices $\pm(\frac{1}{2},\frac{1}{2},\frac{1}{2})$ are projected to the origin. The sign patterns of the vertices of the hexagon determine the isomorphism from \Cref{thm:bijection} between Voronoi vertices and strongly connected orientations of $\Gamma_2$ (right).}  \label{fig:hexagon_orientations}
\end{figure}
\end{example}

\begin{example}[Genus $3$]
The Voronoi polytope $V_Q$ associated to the banana graph in genus~$3$ is the rhombic dodecahedron, see \Cref{fig:voronoibananag3} and \cite[Figure~4]{AgoCelStrStu}. Its $f$-vector is $(14,24,12)$, where the $14$
vertices split into three equivalence classes $[\textcolor{myblue}{{\bf 1}}],[\textcolor{mygreen}{{\bf 2}}]$ and, $[\textcolor{myred}{{\bf 3}}]$, of cardinality $4,6,$ and $4$, respectively. The Delaunay sets arising for the elements of each class are combinatorially equivalent to the hypersimplices $\textcolor{myblue}{\Delta_{1,4}},\textcolor{mygreen}{\Delta_{2,4}},$ and $\textcolor{myred}{\Delta_{3,4}}$. The isomorphism given in \Cref{thm:bijection} is illustrated in \Cref{fig:voronoibananag3}.
\begin{figure}[ht]
    \centering
    \captionsetup{width=.8\linewidth, font=small}
\includegraphics{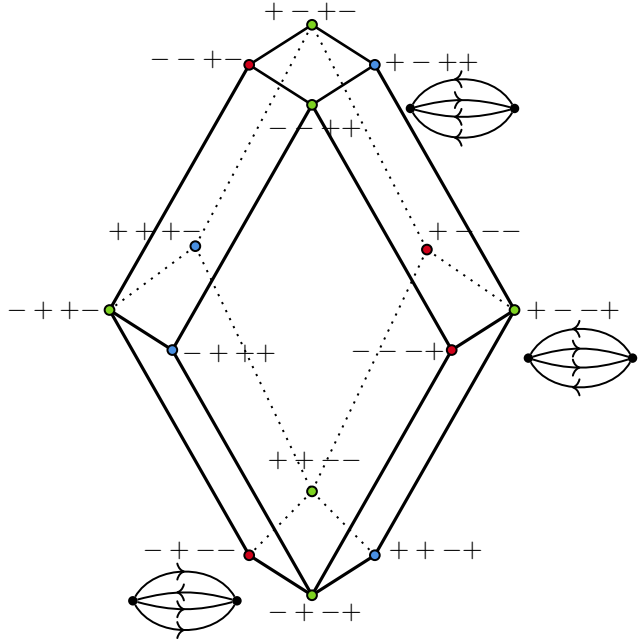}
    \caption{The Voronoi cell for the banana curve of genus $3$ with vertices labeled by strongly connected orientations of $\Gamma_3$. Note that, while the signatures depend on the initial orientation of the graph, the final orientation is independent of such a choice.} 
    \label{fig:voronoibananag3} 
\end{figure}
\end{example}

These identifications complete the dictionary between the tropical Jacobians of the banana graphs and the combinatorics of hypersimplices and uniform matroids. In Section \ref{sec:Hirota_of_graph} we use this dictionary to translate tropical theta functions into KP $\tau$–functions and to describe the associated Hirota varieties.

\section{KP multi-solitons and limit theta functions}\label{sec:theta_functions_limit}

The KP equation admits a rich family of algebro–geometric (finite–gap) solutions arising from smooth complex curves. Let us start recalling the algebro--geometric data necessary to construct complex smooth KP solutions following \cite{Krich1976,Krich1977} and \cite{dubrovin}. 

Fix the triple $(\X,p_0,\zeta)$, where $\X$ is a smooth complex algebraic curve of genus $g$, $p_0$ is a distinguished point on $\X$, and $\zeta$ is the affine coordinate such that $\zeta^{-1}(p_0)=0$. Then, for any given degree $g$ non--special divisor $p_1+\dots +p_g\in \text{Div}(\X)$ associated with $(\X,p_0,\zeta)$, there exists a unique (up to normalization at $p_0$) the Baker--Akhiezer function is analytic on $\X\setminus \{ p_0,p_1,\dots,p_g \}$, with pole divisor in $p_1+\dots +p_g$ and essential singularity at $p_0$. This Baker--Akhiezer function solves a spectral problem whose compatibility condition is given by the KP equation \eqref{eq:KPequation}. Then the complex smooth finite-gap solutions to the KP equation on $(\X,p_0,\zeta)$ are parameterized by the divisor $p_1+\cdots+p_g$ as follows.

Let $Q$ be a Riemann matrix for $\X$, i.e. complex symmetric with negative definite real part.
 The classical Riemann theta function
\begin{equation}
\label{eq:RTFimaginary}
\theta(\mathbf z\,|\,Q)
\,=\,\sum_{\cc\in\mathbb{Z}^g}
\exp\!\left[\frac{1}{2}\,\cc^{T}Q\,\cc+\cc^{ T}\mathbf z\right]
\end{equation}
yields the \emph{finite-gap} solution associated with the algebro--geometric data $(\X, p_0, \zeta, p_1,\dots,p_g)$
\begin{equation}
\label{eq:kpthetasol}
u(x,y,t)\;=\;c+2\,\partial_x^2\log\theta\big(\mathbf U\,x+\mathbf V\,y+\mathbf W\,t+\boldsymbol{D}\,\big|\, Q),
\end{equation}
 where the vectors $\mathbf U,\mathbf V,\mathbf W$ are certain periods of normalized meromorphic differentials of the second kind with a pole at the distinguished point $p_0=\infty$, $c\in\mathbb C$ is a constant, and 
\begin{equation}\label{eq:Dparameter}
     \boldsymbol{D}\,=\,-\mathcal{A}(p_1,\dots,p_g)-\mathbf K
 \end{equation}
with $\mathcal{A}$ the Abel map and $\mathbf K$ the vector of Riemann constants. 
 
 The triple $(\mathbf U,\mathbf V,\mathbf W)$ traces a three–dimensional algebraic variety in the weighted projective space $\mathbb{W}\mathbb{P}^{3g-1}$, namely the Dubrovin threefold \cite{Agostini_2021}. The behavior of the Dubrovin threefold under degenerations of the underlying curve has been considered in \cite[Section 6]{Agostini_2021}. Subsequently, \cite{AgoFevManStu} introduced the Hirota variety which parametrizes solutions of the KP equation from tropical limits of the Riemann theta function, thereby capturing the limiting KP data in a combinatorial–geometric framework. 

 The tropical limits of finite-gap solutions are KP multi-soliton solutions whose data are encoded by Grassmannians. 
 In this work, we refine that picture by establishing an explicit correspondence between multi-soliton data and tropical $\tau$–functions with identifying the parametrizations of Hirota varieties for banana graphs. 
 
 In this section, we recall the basic framework of the KP multi-soliton solutions in their Grassmann form and we consider the tropical limit of the theta function and of the $\UU,\VV,\WW$ in the case of banana graphs. Then in \Cref{sec:Hirota_of_graph} we characterize the Hirota variety for banana graphs, identifying the KP multi-solitons that arise as the $\varepsilon\to 0$ tropical limits of finite-gap KP solutions on the hyperelliptic curves $X_\varepsilon$.

\subsection{KP multi-solitons}

Let $\{ \k_1,\dots, \k_n \}$ be either real or complex parameters. In the case that they are real, we shall assume that their ordering is $\k_1<\dots<\k_n$.
Let $A=(A_{ij})$ be either a real or complex $k\times n$ matrix of rank $k$. 
Following \cite{Abenda2018,FreemanNimmo,KW2013,Matveev}, we consider the $k$ functions
\begin{equation}\label{eq:f_functions}
    f^{(i)}(x,y,t)\,=\,\sum_{j=1}^n A_{ij}\,E_j(x,y,t),
    \qquad
    E_j(x,y,t)\,=\,\exp[\k_j x+\k_j^2 y+\k_j^3 t],
    \quad i\in[k],
\end{equation}
which form a $k$-dimensional space of solutions of the heat equations
\begin{equation}\label{eq:heat_hierarchy}
    \partial_y f\,=\,\partial_x^2 f,
    \qquad
    \partial_t f\,=\,\partial_x^3 f.
\end{equation}
The corresponding $\tau$–function is defined by the Wronskian $ {\rm Wr}_x(f^{(1)},\dots,f^{(k)})$, i.e,
\begin{equation}\label{eq:tauGrass}
    \tau_A(x,y,t) =\!\!\sum_{J\in \binom{[n]}{k}} \!\!
     A_J K_J E_J(x,y,t),
\end{equation}
where $A_J$ and $K_J$ denote the minors of the $k\times n$ matrix $A$ and the Vandermonde submatrix $(\k_j^{i-1})_{i\in[k],j\in[n]}$, respectively, taken with columns indices $J$. More explicitly, we have 
\begin{equation}\label{eq:KJ_EJ}
\displaystyle K_J\, =\, \prod_{\substack{i,j\in J\\i<j}}(\k_j-\k_i)
 \qquad \text{and} \qquad E_J(x,y,t) \,=\, \prod_{j\in J} E_j(x,y,t), 
\end{equation}
with the $E_j$ as in \eqref{eq:f_functions}. 
Then, the function 
\begin{equation}\label{eq:u_tau}
    u(x,y,t)\,=\,2\,\partial_x^2\log\tau(x,y,t)
\end{equation}
satisfies the KP equation~\eqref{eq:KPequation} and it is called a $(k,n)$--\emph{multi-soliton} because of its asymptotic properties in the space-time variables (see \cite{KW2013}). 
The minors $A_J$ are the Plücker coordinates of a point $A\in{\rm Gr}(k,n)$, so that the soliton data are naturally parametrized by the Grassmannian. 
The solution $u(x,y,t)$ is real and regular for all real $(x,y,t)$ if and only if all Plücker coordinates are non-negative, that is $A\in{\rm Gr}_{\geq 0}(k,n)$~\cite[Theorem~12.1]{KW2013} and $\k_1<\dots<\k_n$. In this case, the qualitative behavior of the solution depends on the positroid cell represented by $A$, as classified in~\cite{KW2014}. For convenience, we denote by $\tau_A(x,y,t)$ the $\tau$-function associated with the point $A \in \Gr(k,n)$.

 Given such expression for a $\tau$-function, a natural question is whether one can obtain a parametrization from an underlying nodal curve $X$, that is associate a $\tau$--function $\tau_{A}(x,y,t)$ for some matrix $A\in \Gr(k,n)$ starting from the combinatorics of $X$. This question will be studied in depth in~\Cref{sec:Hirota_of_graph}.

\subsection{Theta functions in the tropical limit}

We work with tropical curves in the tropicalization of the moduli space of complex smooth projective algebraic curves of genus $g$, denoted by $\mathcal{M}_g^{\rm tr}$. It consists of all metric graphs of genus $g$. A stable curve $\X$ in the Deligne-Mumford compactification of $\mathcal{M}_g$ is mapped to its dual graph in $\mathcal{M}_g^{\rm tr}$, namely the tropicalization ${\rm Trop}(\X)$ of $\X$. Similarly to the classical setting, we have the tropical Riemann matrix $Q$ coming from its tropical curve, which is a $g\times g$ real, symmetric and positive semidefinite matrix. The Riemann theta function becomes a finite sum of exponentials under the tropical degenerations. The combinatorial data that underlies the degenerate Riemann theta function is given by the Voronoi cells with respect to the lattice $\mathbb{Z}^g$ in the metric defined by $Q$. More precisely, we consider a Mumford curve $X$ of genus $g$. The tropical curve $\rm{Trop}(\X)$ can be read off from the tropical Riemann matrix. 

Now we consider the degeneration of $\X$ over $\mathbb{C}\{\!\!\{{\varepsilon}\}\!\!\}$ to its tropical limit $X$. The Riemann matrix can be written as 
\begin{equation}\label{eq:Repsilon}
    Q_\varepsilon\, =\, \log(\varepsilon^2)Q+R_\varepsilon
\end{equation}
where $R_\varepsilon$ is a $g\times g$ symmetric matrix with entries that are complex analytic functions in $\varepsilon$ that converge as $\varepsilon$ goes to $0$. We postpone to Section~\ref{sec:Riemannmatrix} the detailed description of this limit in the case of the banana graph. Fixing a point ${\aa}$, and replacing ${\zz}$ by ${\bf z}-\log(\varepsilon^2)Q{\bf a}$ in the classical Riemann theta function in~\eqref{eq:RTFimaginary}, we obtain

\begin{equation}\label{eq:Riemannthetafunctiondegeneration}
\theta({\bf z}-\log(\varepsilon^2)Q{\bf a}\,|\,Q_\varepsilon)
=\sum_{{\bf c}\in\mathbb{Z}^g}
\varepsilon^{\left({\bf c}^TQ{\bf c}-2{\bf c}^TQ{\bf a}\right)}
\exp\!\left[
\tfrac{1}{2}{\bf c}^TR_\varepsilon{\bf c}+{\bf c}^T{\bf z}
\right].
\end{equation}
This expression converges for $\varepsilon\rightarrow 0^{+}$ if $\aa$ belongs to the Voronoi cell $V_Q$. We state the following result from \cite[Theorem 3]{AgoFevManStu}, and  \cite[Theorem 4]{AgoCelStrStu} to present the limit: 

\begin{theorem}
    Fix ${\bf a}$ in the Voronoi cell of the tropical Riemann matrix $Q$. For $\varepsilon\rightarrow 0$, the series \eqref{eq:Riemannthetafunctiondegeneration} converges to a tropical theta function supported on the Delaunay set $\mathcal{D}=\mathcal{D}_{{\bf a},Q}$, namely
    \begin{equation}\label{eq:tropicalthetafunction}
        \theta_\mathcal{D}(\zz)=\sum_{\cc\in \mathcal{D}}a_\cc\exp\left[\cc^T\zz\right], \text{ where } a_\cc=\exp\left[\frac{1}{2}\cc^TR\cc\right],
    \end{equation}
    where $R =\displaystyle \lim\limits_{\varepsilon \to 0^+} R_\varepsilon$.
\end{theorem}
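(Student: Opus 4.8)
The plan is to analyze the $\varepsilon\to 0^+$ behavior of the series \eqref{eq:Riemannthetafunctiondegeneration} term by term, tracking the exponents $\varepsilon^{\cc^TQ\cc - 2\cc^TQ\aa}$. First I would observe that, since $Q$ is positive semidefinite with rational nullspace and $\aa\in V_Q$, the function $\cc\mapsto \cc^TQ\cc - 2\cc^TQ\aa = (\cc-\aa)^TQ(\cc-\aa) - \aa^TQ\aa$ attains its minimum over $\ZZ^g$ exactly on the Delaunay set $\mathcal D_{\aa,Q}$, by the very definition \eqref{eq:Delaunay_set} of $\mathcal D_{\aa,Q}$: these are precisely the lattice points closest to $\aa$ in the $Q$-norm. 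Denote this minimum value by $m = m(\aa,Q) = -\aa^TQ\aa + \min_{\cc}(\cc-\aa)^TQ(\cc-\aa)$. Every term with $\cc\notin\mathcal D_{\aa,Q}$ carries a strictly larger power of $\varepsilon$, while the terms with $\cc\in\mathcal D_{\aa,Q}$ all carry exactly $\varepsilon^{m}$.

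Next I would factor out $\varepsilon^{m}$ from the entire series. After this renormalization, each Delaunay term contributes $\exp[\tfrac12\cc^TR_\varepsilon\cc + \cc^T\zz]$, which converges to $\exp[\tfrac12\cc^TR\cc + \cc^T\zz] = a_\cc\exp[\cc^T\zz]$ as $\varepsilon\to 0^+$ since $R_\varepsilon\to R$; this is a finite sum (the Delaunay set is finite, being the vertex set of a bounded polytope), so the limit of the finite sum is the sum of the limits, producing exactly $\theta_{\mathcal D}(\zz)$. For the non-Delaunay terms, each individually tends to $0$ because of the positive power $\varepsilon^{\cc^TQ\cc-2\cc^TQ\aa - m}$ with a strictly positive exponent; the remaining point is to control the tail uniformly, i.e.\ to justify exchanging the $\varepsilon\to 0$ limit with the infinite sum. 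For this I would produce a bound of the form $\cc^TQ\cc - 2\cc^TQ\aa - m \ge \delta\|\bar\cc\|^2$ valid for $\cc$ outside a compact set, where $\bar\cc$ denotes the component of $\cc$ on the positive-definite part of $Q$ (the quotient $\ZZ^g$ modulo the rational nullspace of $Q$), together with the fact that on the nullspace directions the quadratic form and the linear term both vanish so those lattice directions either stay in $\mathcal D_{\aa,Q}$ or are excluded — more precisely, since the theorem is stated for a $Q$ arising from a graph (hence positive definite after the identification in Section~\ref{sec3}, or with controlled kernel in general), a Gaussian-type majorant $\sum_\cc \varepsilon^{\delta\|\cc\|^2}\exp[C\|\cc\|\,(1+\|\zz\|)]$ converges for small $\varepsilon$ and dominates the tail, giving locally uniform convergence on compact sets in $\zz$ by dominated convergence.

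The main obstacle I anticipate is exactly this uniform-convergence / dominated-convergence step: one must exhibit an explicit $\varepsilon$-independent (or uniformly-in-$\varepsilon$-summable) majorant for the non-Delaunay tail, handling both the quadratic growth of $\cc^TQ\cc$ in the positive-definite directions and the behavior of $R_\varepsilon$, which is only assumed to converge, not to be uniformly bounded near $\varepsilon = 0$ — though in the banana case $R_\varepsilon$ is explicit (Section~\ref{sec:Riemannmatrix}) and this is routine. A clean way to organize this is to split $\ZZ^g = (\mathcal D_{\aa,Q}) \sqcup (\text{second Delaunay shell}) \sqcup (\text{rest})$, treat the finite second shell by hand to identify the next-order term, and bound the rest by the Gaussian majorant. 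I would then conclude that, after dividing by $\varepsilon^{m}$, the series converges locally uniformly in $\zz$ to $\theta_{\mathcal D}(\zz)=\sum_{\cc\in\mathcal D}a_\cc\exp[\cc^T\zz]$ with $a_\cc=\exp[\tfrac12\cc^TR\cc]$ and $R=\lim_{\varepsilon\to 0^+}R_\varepsilon$, which is the assertion. (Since the statement as quoted suppresses the overall $\varepsilon^{m}$ scaling, I would either absorb it into the normalization of the theta function or note it explicitly as is customary in \cite{AgoFevManStu,AgoCelStrStu}.)
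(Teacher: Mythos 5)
Your proposal is correct and is essentially the argument the paper relies on: the paper itself gives no independent proof but defers verbatim to \cite[Theorem~3]{AgoFevManStu} and \cite[Theorem~4]{AgoCelStrStu}, noting only that the scaling by $\log(\varepsilon^2)$ does not change which lattice points dominate, and your term-by-term analysis with a Gaussian majorant for the tail is exactly what those arguments do. One small simplification resolves the concern you flag at the end: since $\aa$ lies in the Voronoi cell $V_Q$, the origin is already a nearest lattice point to $\aa$, so $\min_{\cc\in\ZZ^g}(\cc-\aa)^TQ(\cc-\aa)=\aa^TQ\aa$ and your constant $m$ equals $0$; the shift of $\zz$ by $-\log(\varepsilon^2)Q\aa$ is designed precisely so that no overall factor $\varepsilon^{m}$ needs to be divided out, and the Delaunay terms carry exponent exactly $0$ while all others carry strictly positive exponents. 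Note also that for a connected graph $B$ has full row rank, so $Q$ is positive definite and the caveats about the rational nullspace are not needed here.
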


\begin{proof}
    The proof follows verbatim the arguments of \cite[Theorem 3]{AgoFevManStu} and \cite[Theorem 4]{AgoCelStrStu}.
The only difference is that we use a degeneration scaled by $\log(\varepsilon^{2})$ instead. This change yields 
the same asymptotic behavior: the dominant lattice points are unchanged and the limiting tropical theta function is identical.
We therefore omit the details.
\end{proof}

If ${\bf a}$ lies in the interior, the Delaunay polytope ${\rm conv}(\mathcal{D})$ is just the origin. On the other hand, if ${\bf a}$ is a vertex of the Voronoi polytope then the Delaunay polytope is $g$-dimensional. In fact, all intermediate dimensions for the Delaunay polytope can occur depending on the location of ${\bf a}$ in the Voronoi polytope.

We now study the behavior of the degenerate theta function $\theta_{\mathcal{D}_{\aa,Q}}(\zz)$ with respect to the choice of a different representative $\aa'$ in the same equivalence class of $\aa$ as in \Cref{eq:equivalence_Voronoi_vertices}.

\begin{lemma}\label{lem:equivalence_thetafunct}
    Let $\aa,\aa' \in \mathcal{V_Q}$ such that $\aa\sim\aa'$. Then the theta functions $\theta_{\mathcal{D}_{\aa,Q}}(\zz)$ and $\theta_{\mathcal{D}_{\aa',Q}}(\zz)$ coincide up to a factor of an exponential. 
\end{lemma}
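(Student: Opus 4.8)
The plan is to use the explicit relationship between the Delaunay sets of equivalent Voronoi vertices established in \Cref{prop:preequiv} and the formula for the coefficients of the tropical theta function in \eqref{eq:tropicalthetafunction}. Since $\aa \sim \aa'$, there exists $\cc_0 \in \DaQ$ with $\aa' = \aa - \cc_0$, and by \Cref{prop:equivvertex_general} (or Item~2 of \Cref{prop:preequiv}) we have the clean translation identity $\mathcal{D}_{\aa',Q} = \mathcal{D}_{\aa,Q} - \cc_0$. This means that every $\cc' \in \mathcal{D}_{\aa',Q}$ is of the form $\cc' = \cc - \cc_0$ for a unique $\cc \in \DaQ$, so I can re-index the sum defining $\theta_{\mathcal{D}_{\aa',Q}}(\zz)$ over $\cc \in \DaQ$.

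First I would write out $\theta_{\mathcal{D}_{\aa',Q}}(\zz) = \sum_{\cc' \in \mathcal{D}_{\aa',Q}} a_{\cc'}\exp[\cc'^T\zz]$ and substitute $\cc' = \cc - \cc_0$, obtaining $\sum_{\cc \in \DaQ} a_{\cc - \cc_0}\exp[(\cc - \cc_0)^T \zz]$. The key computation is then to expand the coefficient $a_{\cc - \cc_0} = \exp\bigl[\tfrac{1}{2}(\cc - \cc_0)^T R (\cc - \cc_0)\bigr]$ using bilinearity and symmetry of $R$: this gives $\exp\bigl[\tfrac{1}{2}\cc^T R \cc\bigr]\exp\bigl[-\cc^T R \cc_0\bigr]\exp\bigl[\tfrac{1}{2}\cc_0^T R \cc_0\bigr] = a_{\cc}\,\exp\bigl[-\cc^T R \cc_0 + \tfrac{1}{2}\cc_0^T R \cc_0\bigr]$. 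The term $\exp[\tfrac{1}{2}\cc_0^T R \cc_0]$ is a constant independent of $\cc$, so it pulls out of the sum, while the term $\exp[-\cc^T R \cc_0] = \exp[\cc^T(-R\cc_0)]$ combines with $\exp[-\cc_0^T\zz]\exp[\cc^T\zz] = \exp[\cc^T(\zz - \cc_0)]$ to give $\exp[\cc^T(\zz - R\cc_0 - \cc_0)]$... wait, I need to be careful: the $\exp[-\cc_0^T\zz]$ factor is also constant in $\cc$ and pulls out. So after collecting all $\cc$-independent factors, the sum becomes $\exp[-\cc_0^T\zz + \tfrac{1}{2}\cc_0^T R \cc_0] \sum_{\cc \in \DaQ} a_{\cc}\exp[\cc^T(\zz - R\cc_0)]$, which is $\exp[-\cc_0^T\zz + \tfrac{1}{2}\cc_0^T R \cc_0]\,\theta_{\DaQ}(\zz - R\cc_0)$.

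This shows $\theta_{\mathcal{D}_{\aa',Q}}(\zz)$ equals $\theta_{\DaQ}$ evaluated at the shifted argument $\zz - R\cc_0$, times an exponential factor $\exp[-\cc_0^T\zz + \tfrac{1}{2}\cc_0^T R\cc_0]$ which is the exponential of an affine function of $\zz$. If the intended statement is literally that the two functions $\theta_{\mathcal{D}_{\aa,Q}}(\zz)$ and $\theta_{\mathcal{D}_{\aa',Q}}(\zz)$ agree up to an exponential prefactor, I would note that passing to the KP solution $u = 2\partial_x^2 \log \tau$ kills both the exponential-of-affine prefactor and the effect of the constant shift $\zz \mapsto \zz - R\cc_0$ once $\zz$ is specialized to $\UU x + \VV y + \WW t + \boldsymbol D$ (the shift merely redefines $\boldsymbol D$), so the associated multi-solitons are identical — this is presumably the point of the lemma for the sequel. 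The only mild subtlety to keep track of is bookkeeping of signs and the symmetry $R = R^T$ in the quadratic expansion; there is no genuine obstacle. I would also remark that the argument does not require $\aa, \aa'$ to be vertices — it works for any two equivalent points of $V_Q$ — but stating it for Voronoi vertices suffices here.
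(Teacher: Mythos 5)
Your proof is correct and follows essentially the same route as the paper's: both re-index the sum via the translation identity $\mathcal{D}_{\aa',Q}=\mathcal{D}_{\aa,Q}-\cc_0$ and expand the quadratic form $\tfrac12(\cc-\cc_0)^TR(\cc-\cc_0)$, the only difference being that the paper evaluates the two theta functions at the shifted arguments $\zz-R\aa'$ and $\zz-R\aa$ while you keep $\zz$ on the left and absorb the shift $R\cc_0=R\aa-R\aa'$ into the right-hand side, which is equivalent. Your closing remark that the residual argument shift is harmless because it only redefines $\boldsymbol D$ in the KP specialization matches how the lemma is actually used in the paper.
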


\begin{proof}
    Let $\aa,\aa'\in \mathcal{V}_Q$ with $\aa\sim\aa'$. By \Cref{prop:equivvertex_general}, we have $\aa'=\aa-\cc_0$ for some Delaunay vertex $\cc_0\in \DaQ$. Evaluate the theta function \eqref{eq:tropicalthetafunction} at $\tilde{{\bf z}}=\zz-R\aa'$:
    \begin{align*}
    \theta_{\mathcal{D}_{\aa'\!,Q}}(\tilde{{\bf z}}) =& \sum_{\cc'\in \mathcal{D}_{\aa'\!,Q}} \exp\left[ \frac{1}{2} (\cc')^T R \cc' -(\cc')^T R \aa'\right] \exp[(\cc')^T \zz].\\
    =& \sum_{\cc\in \mathcal{D}_{\aa,Q}} \exp\left[ \frac{1}{2} (\cc-\cc_0)^T R (\cc-\cc_0) -(\cc-\cc_0)^T R (\aa-\cc_0)\right] \exp[(\cc-\cc_0)^T \zz],\\
    =&\exp\left[-\cc_0^T\zz -\frac{1}{2} \cc_0^T R\cc_0\right]\cdot \theta_{\DaQ}(\zz- R \aa),
     \end{align*}
    where in the second equality we used that $\mathcal{D}_{\aa',Q}= \DaQ - \cc_0$.
\end{proof}

\begin{example}[Genus $2$]\label{ex:theta_genus2_banana}
Let $\Gamma$ be the banana graph as in~\Cref{ex:sec1_genus2_banana}. Any choice of vertex $\aa_i$ of $V_Q$ leads to a triangle as its Delaunay polytope, e.g., $\mathcal{D}_{\aa_4,Q}=\{(0,0),(1,0),(0,1)\}$. Then, the tropical theta function is the sum $\theta_{\mathcal{D}_{\aa_4,Q}}(\zz)=a_{00}+a_{10}\exp(\zz_1)+a_{01}\exp(\zz_2)$.
\end{example}

Starting from the theta function of the point configuration ${\cal D} = {\cal D}_{\aa,Q}$ as described in \Cref{eq:tropicalthetafunction}, let us evaluate the theta function $\theta_{{\cal D}}(\zz)$ at 
\[\zz \,=\, \UU x + \VV y +\WW t + {\bf D},\]
where $\UU,\VV,\WW$ are coordinate vectors of size $g$ in the weighted projective space $\mathbb{W}\PP^{3g-1}$ with respective weights $1,2,3$, and ${\bf D} =(\log \beta_1,\dots,\log \beta_g)\in \CC^g$ with $\log$ being the natural logarithm:
\begin{equation}\label{eq:tau_from_theta}
    \tau_{\mathcal{D}}(x,y,t) \,  \coloneqq\, \theta_{{\cal D}}(\zz)\, = \,\sum_{\cc\in {\cal D}} \alpha_\cc \exp\biggl[ \big(\!\sum_{i=1}^g \cc_i U_i\big)x+\big(\!\sum_{i=1}^g \cc_i V_i\big)y+\big(\!\sum_{i=1}^g \cc_i W_i\big)t\biggr],
\end{equation}
with $\alpha_\cc\coloneqq a_\cc\exp[\cc^T {\bf D}]$. 

For tropical curves associated with banana graphs, in \Cref{sec:Hirota_of_graph} we explicitly characterize the Hirota variety, that is we verify under which conditions $\tau_{\mathcal{D}}(x,y,t)$ is a \textit{KP tropical $\tau$-function}, i.e. $u(x,y,t) = 2\partial_x^2 \log(\tau_{\mathcal{D}}(x,y,t))$ is a KP multi-soliton solution.

\subsection{Degenerating hyperelliptic curves}\label{sec:Riemannmatrix}

In this section, we analytically compute both the tropical Riemann matrix $Q$ and the matrix $R =\lim\limits_{\varepsilon \to 0 } R_\varepsilon$ as in \eqref{eq:Repsilon} for the degeneration of hyperelliptic curves that give rise to the banana graph. 
This is one of the ingredients in the explicit parametrization of the Hirota variety  in \Cref{sec:parametrization}. We also define the tropical Abelian map which will be used in \Cref{sec:Hirota_divisors} to parameterize the Hirota variety in terms of divisors. An analytic point of view on periods of tropical curves and their KP solutions is also presented in~\cite{ichikawa2023periods}.

Fix pairwise distinct complex parameters $\kappa_1,\dots,\kappa_{g+1}$ and consider the family
\begin{equation*}
X_\varepsilon\,:\,y^2\,=\,\prod_{i=1}^{g+1}(x-\kappa_i-\varepsilon)(x-\kappa_i+\varepsilon),\qquad \varepsilon>0.
\end{equation*}
As usual, we choose  the standard symplectic homology basis $\{a_i(\varepsilon), b_i(\varepsilon)\}$ on $X_\varepsilon$ for $i=1,\dots,g$: the cycle $a_i(\varepsilon)$ is a loop encircling the branch cut between $\kappa_{i+1}-\varepsilon$ and $\kappa_{i+1}+\varepsilon$ on the upper sheet of the two-sheeted cover of $\mathbb{P}^1_{x}$ (i.e., a small circle around $\kappa_{i+1}$ on $X_\varepsilon$), and the cycle $b_i(\varepsilon)$ is the cycle representing $e_1-e_{i+1}$, obtained by lifting the segment in the $x$–line from $\kappa_1+\varepsilon$ to $\kappa_{i+1}-\varepsilon$ and gluing the two sheets. Let $\omega_1(\varepsilon),\dots,\omega_g(\varepsilon)$ be the unique holomorphic differentials normalized by
\[
\int_{a_i(\varepsilon)}\omega_j(\varepsilon)\,=\,2\pi i \delta_{ij},\qquad i,j=1,\dots,g.
\]
The period matrix $Q(\varepsilon)=(Q_{ij}(\varepsilon))$ with
$Q_{ij}(\varepsilon)\,=\, \int_{b_i(\varepsilon)}\omega_j(\varepsilon)$ is a Riemann matrix, which is complex symmetric with negative definite real part.
As $\varepsilon\to0^+$, the pairs $\kappa_i\pm\varepsilon$ collide and the central fiber $X=X_0$ splits as
\begin{equation}\label{eq:hyperelliptic_curve}
    X \,=\, X_+ \cup X_- , \qquad X_\pm:\ y\,=\,\pm h(x),\quad h(x)\,=\,\prod_{i=1}^{g+1}(x-\kappa_i),
\end{equation}
intersecting transversely at the nodes $n_i=(\kappa_i,0)$, $i=1,\dots,g+1$.  
The normalization map $\nu:\tilde X = X_+ \sqcup X_- \longrightarrow X$ satisfies $\nu^{-1}(n_i)=\{q_i^+,q_i^-\}$ with $q_i^\pm=(\kappa_i,0)\in X_\pm$.  
The dualizing sheaf $\omega_X$ consists of pairs of meromorphic differentials on $X_\pm\simeq\mathbb P^1$ with simple poles at the preimages of the nodes and opposite residues following the residue theorem \cite[Equation (1.4)]{Artamkin2004}. In the limit $\varepsilon\to 0^+$, one obtains the following canonical basis of differentials 
$\omega_1,\dots,\omega_g\in H^0(X,\omega_X)$ with
\begin{equation}\label{eq:basis_holom_diff}
\omega_i\,=\,
\begin{cases}
\dfrac{dx}{x-\kappa_{i+1}}-\dfrac{dx}{x-\kappa_1}, & \text{on }X_+,\smallskip\\
-\Big(\dfrac{dx}{x-\kappa_{i+1}}-\dfrac{dx}{x-\kappa_1}\Big), & \text{on }X_-,
\end{cases}
\qquad i=1,\dots,g,
\end{equation}
so each $\omega_i$ has residues $+1$ at $q_{i+1}^\pm$ and $-1$ at $q_1^\pm$. 
To obtain the limit of the period matrix we replace $\omega_j(\varepsilon)$ by the limiting forms $\omega_j$ and compute the $\varepsilon$–asymptotics explicitly. So we get for the diagonal entries
\[
\begin{aligned}
Q_{ii}(\varepsilon)
&\,=\,2\!\int_{\kappa_1+\varepsilon}^{\kappa_{i+1}-\varepsilon}\Big(\frac{1}{x-\kappa_{i+1}}-\frac{1}{x-\kappa_1}\Big)\,dx\\
&\,=\,2\log(\varepsilon^2)-2\log
(\kappa_{i+1}-\kappa_1)^{2}+O(\varepsilon),
\end{aligned}
\]
and for the off–diagonal entries
\[
\begin{aligned}
Q_{ij}(\varepsilon)
&\,=\,2\!\int_{\kappa_1+\varepsilon}^{\kappa_{i+1}-\varepsilon}\Big(\frac{1}{x-\kappa_{j+1}}-\frac{1}{x-\kappa_1}\Big)\,dx\\
&\,=\,\log(\varepsilon^2)
+\log\!\left(\frac{\kappa_{i+1}-\kappa_{j+1}}{(\kappa_{i+1}-\kappa_1)(\kappa_{j+1}-\kappa_1)}\right)^2
+O(\varepsilon).
\end{aligned}
\]
Collecting the coefficients of $\log(\varepsilon^2)$ gives the tropical Riemann matrix $ Q$ in the homology basis: $Q_{ii}=2$ and $Q_{ij}=1$ with $i\neq j$. The constant terms define the limit Riemann matrix $R=(R_{ij})$ with
\begin{equation}\label{eq:entries_limitR}
R_{ii}\,=\,2\log(\kappa_{i+1}-\kappa_1)^{-2},\qquad
R_{ij}\,=\,2\log\!\left(\frac{\kappa_{i+1}-\kappa_{j+1}}{(\kappa_{i+1}-\kappa_1)(\kappa_{j+1}-\kappa_1)}\right)^2
\quad(i\neq j).
\end{equation}
Thus,
\[
Q(\varepsilon)\,=\,\log(\varepsilon^2)\,Q+R+O(\varepsilon)\qquad(\varepsilon\to0^+).
\]
Note that the entries of the limiting matrices $Q$ and $R$ depend on the choice of homology basis encoded in $B$. However, as pointed out in~\Cref{sec:tropicalJacobians}, a different basis choice gives another matrix related by a $\text{GL}_g(\ZZ)$-action by conjugation.

 Let $p_0^\varepsilon$ be point at infinity on $X_\varepsilon$. With this base point, the Abel-Jacobi image of the divisor $p_1+\dots+p_g$ is defined in the Jacobian of $X_\varepsilon$ is 
\[
\mathcal A_i^\varepsilon(p_1,\dots,p_g)\ = \sum_{\ell=1}^g\int_{p_0^\varepsilon}^{\,p_\ell(\varepsilon)}\omega_i(\varepsilon)
\qquad i=1,\dots,g.
\]
At the limit $X_0$, the base point $p_0^\varepsilon=\infty$ splits to the two points at infinity $\infty_+\in X_+$ and $\infty_-\in X_-$. We view the limit of the Abel-Jacobi map as follows \cite{AbendaGrinevich2025DN}. Assuming that $p_1,\dots,p_k\in X_+$ and $p_{k+1},\dots,p_g\in X_-$ are the limits of $p_\ell^\varepsilon$,
\begin{equation}\label{eq:Riemann_constant}
\mathcal{A}_i^0(p_1,\dots,p_g)=\lim_{\varepsilon\to0}\ \mathcal A_i^\varepsilon(p_1,\dots,p_g)
\, =\, \sum_{\ell=1}^{k}\int_{\infty_+}^{p_\ell}\omega_{i_{|_{X_+}}}\;+\;\sum_{\ell=k+1}^{g}\int_{\infty_-}^{p_\ell}\omega_{i_{|_{X_-}}}
\end{equation}
We adopt the notation in \cite[Section 6]{AgoFevManStu} to define the following polynomials 
\begin{equation}\label{eq:polynomialsPandQ}
    P(z)\,\coloneqq\, \prod_{j=1}^k(z-p_j) \quad \text{and} \quad Q_1\coloneqq1,\,Q_2\coloneqq\frac{1}{z-p_{k+1}},\, \dots,\,Q_{n-k}\coloneqq\frac{1}{z-p_{g}} .
\end{equation}
Then the limit Abel-Jacobi image is:
\begin{equation}\label{eq:AbelJacobi}
\mathcal{A}_i^0(p_1,\dots,p_g)=\log \left( \frac{P(\k_{i+1}) \prod_{l=1}^{n-k} Q_l(\k_{i+1})}{P(\k_1)\prod_{l=1}^{n-k} Q_l(\k_1)}    \right)
\end{equation}
We close this section with a brief remark on the vector of Riemann constants $\mathbf{K}$ as in \Cref{eq:Dparameter}. It is the additive shift that appears in the finite-gap solutions for the KP equation. This vector of $X_\varepsilon$ can be computed with the following formula
\begin{equation}\label{eq:riemannconstant}
    \mathbf{K}_j(\varepsilon)
\,=\,\frac{2\pi i +Q_{jj}(\varepsilon)}{2}
-\frac{1}{2\pi i}\sum_{\ell\neq j}\left(\int_{a_\ell(\varepsilon)}\omega_\ell(\varepsilon)(p)\;
\int_{p_0^\varepsilon}^{p}\omega_j(\varepsilon)\right).
\end{equation}

The analysis of its degeneration as $\varepsilon$ goes to $0$ will be carried out in future work.

\subsection{Periods of meromorphic differentials in the tropical limit}\label{sec:trop_UVW}

We study the vectors $\UU, \VV, \WW$ as the limits of the corresponding vectors $\UU_\varepsilon, \VV_\varepsilon, \WW_\varepsilon$ associated to the degenerating family $X_\varepsilon$ of hyperelliptic curves as $\varepsilon \to 0$. In \Cref{sec:Hirota_of_graph}, we investigate the role of $\UU, \VV, \WW$ in the parametrization of the Hirota variety. 

The vectors $\UU_\varepsilon, \VV_\varepsilon, \WW_\varepsilon$ are defined in terms of period integrals of meromorphic differentials with prescribed poles at the essential singularity $p_0 = \infty$, which is fixed throughout the family. For the classical geometry our main references are \cite{dubrovin, Krich1977}.

For $i\ge 1$ we let $\Omega^{i}_\varepsilon$ to be the normalized meromorphic differential with a pole of order $i+1$ at the point $p_0$ at infinity which is the essential singularity of the KP wave function.
On the hyperelliptic curve $X_\varepsilon$, the KP parameters are obtained as 
\[
\UU_\varepsilon = \left(\oint_{\beta_1^\varepsilon} \Omega^1_\varepsilon, \dots, \oint_{\beta_g^\varepsilon} \Omega^1_\varepsilon\right),
\quad
\VV_\varepsilon = \left(\oint_{\beta_1^\varepsilon} \Omega^2_\varepsilon, \dots, \oint_{\beta_g^\varepsilon} \Omega^2_\varepsilon\right),
\quad
\WW_\varepsilon = \left(\oint_{\beta_1^\varepsilon} \Omega^3_\varepsilon, \dots, \oint_{\beta_g^\varepsilon} \Omega^3_\varepsilon\right).
\]
From \cite[Lemma 2.1.2]{dubrovin}, the periods of these differentials are expressed using the local expansion of holomorphic differentials near $p_0$. Let $\omega_j^\varepsilon = f_j^\varepsilon(\zeta)\,d\zeta$ be the expansion in the local coordinate $\zeta$ centered at $p_0$ with $\zeta(p_0) = 0$. Then,
\[
\oint_{\beta_j^\varepsilon} \Omega^\ell_\varepsilon \,=\, \frac{1}{\ell!} \left. \frac{d^{\ell-1}}{d\zeta^{\ell-1}} f_j^\varepsilon(\zeta) \right|_{\zeta = 0}.
\]
To compute the limits of $\UU_\varepsilon, \VV_\varepsilon, \WW_\varepsilon$, we take $\varepsilon \to 0$ and evaluate
\[
\lim_{\varepsilon \to 0} \oint_{\beta_j^\varepsilon} \Omega^\ell_\varepsilon = \frac{1}{\ell!} \left. \frac{d^{\ell-1}}{d\zeta^{\ell-1}} f_j^0(\zeta) \right|_{\zeta = 0},
\]
where $f_j^0(\zeta)$ is the limit of $f_j^\varepsilon(\zeta)$, obtained from the basis of holomorphic differentials on the central fiber. For this, we use the basis in \eqref{eq:basis_holom_diff}. To ensure compatibility with the local coordinate centered at $p_0 = \infty$, we set $\zeta = 1/x$, so that $dx = -\zeta^{-2} d\zeta$. Depending on whether $p_0$ lies on $X_+$ or $X_-$, we have 
\[
f^{0}_j (\zeta) \,=\, \left\{ 
\begin{array}{ll}
\displaystyle \frac{\k_1-\k_{j+1}}{(\zeta\k_{j+1}-1)(\zeta\k_1-1)}, & \text{ if }\,\, p_0\in X_+,\\
\displaystyle \frac{\k_{j+1}-\k_1}{(\zeta\k_{j+1}-1)(\zeta\k_1-1)}, & \text{ if }\,\, p_0\in X_-.
\end{array}
\right.
\]
Therefore, the derivatives yield
\begin{equation}\label{eq:UVW}
(\UU_j, \VV_j,\WW_j ) \,=\, \left\{ \begin{array}{ll}
(\k_1-\k_{j+1},\,\k_1^2-\k_{j+1}^2,\,\k_1^3-\k_{j+1}^3), & \text{ if }\,\, P_0\in X_+,\smallskip\\
(\k_{j+1}-\k_{1},\,\k_{j+1}^2-\k_{1}^2,\,\k_{j+1}^3-\k_{1}^3), & \text{ if }\,\, P_0\in X_-.\\
\end{array}
\right. 
\end{equation}
These vectors can be computed via the matrix $B$ representing the homology basis of the graph as follows.
Moreover, they fulfill the KP dispersion relations.

\begin{lemma}\label{lem:UVW}
Let $B$ be the matrix as in~\Cref{sec3} representing the homology basis of the banana graph $\Gamma$, and let $K$ be the $n\times 3$ matrix whose entries are $K_{ij}=\k_i^j$ for $i=1,\dots,n$ and $j=1,\dots,3$. Then the vectors $\UU,\VV,\WW$ in \eqref{eq:UVW} are given by
\[
\displaystyle \begin{bmatrix}
        \UU \,\, \VV \,\, \WW 
    \end{bmatrix}\, =\,
\left\{    
\begin{array}{ll}
 \displaystyle \quad B \, \cdot K  & \text{ if } \,\, p_0\in X_+,\\
  \displaystyle  - B \, \cdot K  & \text{ if } \,\, p_0\in X_-,
\end{array}
\right.
\]
and they satisfy satisfy the KP dispersion relation $U_i^4+3V_i^2-4U_iW_i=0$ for all $i\in [g]$. 
\end{lemma}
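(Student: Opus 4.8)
The plan is to reduce the lemma to two short verifications: the matrix identity $[\,\UU\ \VV\ \WW\,]=\pm B\cdot K$, which is a direct re-reading of the period computation \eqref{eq:UVW}, and the dispersion relation $U_i^4+3V_i^2-4U_iW_i=0$, which is an elementary polynomial identity in the $\k_i$.

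First I would unpack the matrix $B$. By the choice of cycle basis in \Cref{sec3}, the $i$-th row of $B$ records the cycle $e_1-e_{i+1}$, so $B$ is the $g\times n$ matrix with $B_{i1}=1$, $B_{i,i+1}=-1$, and all other entries $0$. Multiplying on the right by $K$, whose $(l,j)$ entry is $\k_l^{\,j}$, gives $(B\cdot K)_{ij}=\k_1^{\,j}-\k_{i+1}^{\,j}$; hence the $i$-th row of $B\cdot K$ is $(\k_1-\k_{i+1},\,\k_1^2-\k_{i+1}^2,\,\k_1^3-\k_{i+1}^3)$. Comparing with \eqref{eq:UVW}, this is precisely $(U_i,V_i,W_i)$ when $p_0\in X_+$, and its negative when $p_0\in X_-$, which establishes the claimed formula. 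The only point needing a line of care is the matching of sign and orientation conventions for the homology basis and the cycles $\beta_i$, but these are already pinned down in \Cref{sec:Riemannmatrix} and \Cref{sec:trop_UVW}.

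For the dispersion relation I would first note that $U_i^4+3V_i^2-4U_iW_i$ is invariant under $(U_i,V_i,W_i)\mapsto(-U_i,V_i,-W_i)$, which is the only effect of switching $p_0$ between $X_+$ and $X_-$ by \eqref{eq:UVW}; so it suffices to treat the case $p_0\in X_+$. Writing $a=\k_1$ and $b=\k_{i+1}$, one has $U_i=a-b$, $V_i=(a-b)(a+b)$, $W_i=(a-b)(a^2+ab+b^2)$; factoring $(a-b)^2$ out of $U_i^4+3V_i^2-4U_iW_i$ leaves the bracket $(a-b)^2+3(a+b)^2-4(a^2+ab+b^2)$, which expands to $0$. (Equivalently, the exponent of the ratio $E_{i+1}/E_1$ of two solutions of the heat hierarchy \eqref{eq:heat_hierarchy} automatically satisfies the KP line-soliton dispersion relation.) Hence the relation holds for every $i\in[g]$.

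There is no genuine obstacle in this proof: the substantive content of the lemma is the period computation already carried out to produce \eqref{eq:UVW}, and the present statement merely repackages it through $B$ and records the dispersion relation, which then follows from the one-line algebraic identity above.
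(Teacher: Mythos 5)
Your proof is correct and matches the paper's (implicit) argument: the paper states the lemma without a separate proof, treating it as an immediate consequence of the computation in \eqref{eq:UVW}, and your direct verification that $(B\cdot K)_{ij}=\k_1^{\,j}-\k_{i+1}^{\,j}$ together with the factorization $U_i^4+3V_i^2-4U_iW_i=(a-b)^2\bigl[(a-b)^2+3(a+b)^2-4(a^2+ab+b^2)\bigr]=0$ supplies exactly the missing details. One harmless slip: switching $p_0$ between $X_+$ and $X_-$ negates all three of $(U_i,V_i,W_i)$ (not just $U_i$ and $W_i$, since $V_i=\k_1^2-\k_{i+1}^2$ also changes sign), but the dispersion polynomial is invariant under the full negation as well, so your reduction to the case $p_0\in X_+$ still stands.
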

The parametrization of the vectors $\UU,\VV,\WW$ given in~\Cref{lem:UVW} holds for any choice of homology basis represented by the matrix~$B$. Changing basis, in fact, corresponds to left multiplication by a matrix $G\in {\text{GL}_{g}(\ZZ)}$, as in~\Cref{sec:tropicalJacobians}.

\section{
Hirota varieties of banana graphs}\label{sec:Hirota_of_graph}

This section develops the connection between the tropical Jacobian of a banana graph and the Hirota variety describing KP multi-solitons arising from this family of graphs. The Hirota variety ${\cal H}_{\Gamma}$ associated with a tropical curve $\Gamma$ is the union
\begin{equation}\label{eq:Hirota_Gamma}
\bigcup_{\aa\in {V}_Q } {\cal H}_{\aa,\Gamma},
\end{equation}
where $\aa$ is a point in the Voronoi polytope $V_Q$, not necessarily one of its vertices, and ${\cal H}_{\aa,\Gamma}$ is the Zariski closure of the set in $(\CC^*)^{d}\times \mathbb{W}\mathbb{P}^{3g-1}$ given by 
\[
{\cal H}^{\circ}_{\aa,\Gamma} \coloneqq\left\{ \big( (\alpha_\cc)_{\cc\in \DaQ}  , (\UU,\VV,\WW)\bigr)\, : \, \tau_{\DaQ}(x,y,t) \mbox{ is a KP tropical } \tau \mbox{ function of } \Gamma \right\},
\]
where $d$ is the cardinality of $\DaQ$, and $\tau_{\mathcal{D}} (x,y,t)\equiv\tau_{\DaQ}(x,y,t)$ has been defined in \eqref{eq:tau_from_theta}. 
That means that $u(x,y,t)=2\partial_x^2 \log(\tau_{\mathcal{D}}(x,y,t))$ is a solution to the KP equation \eqref{eq:KPequation} if and only if the data $ ((\alpha_\cc)_{\cc\in \DaQ}  , (\UU,\VV,\WW))\in {\cal H}_\Gamma$.

We remark that this notion of Hirota variety differs from the Hirota variety of a Delaunay polytope introduced in \cite{AgoFevManStu}. The special case in which the corresponding polytopes are hypersimplices is addressed in~\Cref{sec:Hirota_of_hypers}.

If $\aa$ is a Voronoi vertex, i.e.,  $\aa \in{\cal V}_Q$, its Delaunay polytope has maximal dimension $g$ equal to the genus of the tropical curve $X$. That justifies the definition of \textit{the main component of the Hirota variety of} ${\cal H}_\Gamma$ to be the closure of the union of the ${\cal H}_{\aa,\Gamma}$, where $\aa$ varies in the set of Voronoi vertices ${\cal V}_Q$. We denote this main component by ${\cal H}_{\Gamma, {\cal V}_Q}$. In what follows we focus on the study of ${\cal H}_{\Gamma, {\cal V}_Q}$ as we expect that it coincides with the variety ${\cal H}_\Gamma$. 

We now focus on the case of $\Gamma$ being the banana graph. For any given equivalence class $[\kk]$ of Voronoi vertices, the corresponding theta functions differ by a multiplicative exponential factor (see \Cref{lem:equivalence_thetafunct}), so ${\cal H}^\circ_{\aa,\Gamma}={\cal H}^\circ_{\aa^{'},\Gamma}$.
Indeed, if $\aa,\aa'\in [\kk]$, then, for any given admissible triple $(\UU,\VV,\WW)$ and for any given $g$-tuple $\bf D$, the KP solutions $u_{\DaQ}(x,y,t)=2\partial_x^2 \log(\tau_{\DaQ}(x,y,t))$ and $u_{{\cal D}_{\aa',Q}}(x,y,t)=2\partial_x^2 \log(\tau_{{\cal D}_{\aa',Q}}(x,y,t))$ coincide.  Therefore, the main component of the Hirota variety is the union
\[
{\cal H}_{\Gamma, {\cal V}_Q} = \bigcup_{k=1}^g
{\cal H}_{\Gamma,[\kk]}, \qquad \text{with } {\cal H}_{\Gamma,[\kk]}={\cal H}_{\Gamma,\aa} \text{ for some } \aa\in [\kk].
\]

Next, we explore the role of the distinguished point $p_0$ in the tropical limit and in the structure of the Hirota variety.
Without loss of generality, let us identify the component $X_+$ with the vertex $v_1$ and $X_-$ with $v_2$. In the tropical limit, the point $p_0$ may end up in $X_-$ or in $X_+$. As $\UU,\VV,\WW$ are the limit of normalized periods of meromorphic differentials with a single pole of convenient order at $p_0$, in the tropical limit as computed in \Cref{sec:trop_UVW}, the Hirota varieties ${\cal H}_{\Gamma,[{\kk}]}$ naturally split into two subvarieties ${\cal H}_{\Gamma,[{\bf k}],v_1}$ and ${\cal H}_{\Gamma,[{\bf k}],v_2}$:
\begin{equation}\label{eq:Hirota_KK}
{\cal H}_{\Gamma,[\kk]} \; = \; {\cal H}_{\Gamma,[\kk],v_1}\, \cup \, {\cal H}_{\Gamma,[\kk], v_2}.
\end{equation}
In the following sections we provide a canonical parametrization of $ {\cal H}_{\Gamma,[\kk],v_i}$ for $i=1,2$ using the explicit computation of the matrix $R$. Then in \Cref{sec:Hirota_divisors} we explain the role of KP divisors in this parametrization. 

\subsection{A parametrization from the tropical data}\label{sec:parametrization}

Recall that the vertices of the Voronoi polytope $V_Q$ are divided into equivalence classes $[{\bf 1}],\dots,[{\bf g}]$ described in~\Cref{thm:vertices_voronoi_Rg}. Let $I_k\coloneqq\{1,2,\dots,k\}.$ For each class, we choose, for convenience, the Voronoi vertex $\aa_{I_k}$, where
\begin{equation}\label{eq:a_minlex}
\aa_{I_k}\, = \,\Bigg( \underbrace{\frac{g+1-k}{g+1},\,\dots,\, \frac{g+1-k}{g+1}}_{ k-1 \text{ times}}, \underbrace{  -\frac{k}{g+1},\, \dots,\,  -\frac{k}{g+1}}_{ g+1-k \text{ times}}\Bigg)\in \RR^g.
\end{equation}
The point $\aa_{I_k}$ is the Voronoi vertex whose associated strongly connected orientation $\iota_{\aa_{I_k}}$ has the first $k$ edges outgoing at $v_1$; equivalently, the Delaunay vertex $\cc =(0,...,0)\in {\cal D}_{\aa_{I_k}}$ corresponds to the minimum lexicographic base $I_k=\{1,\dots,k\} \in {\cal B}_{\aa_{I_k},v_1}$. 

The varieties $\mathcal{H}_{\Gamma,[{\bf k}],v_i}$ parametrize the KP multi-solitons arising from a hyperelliptic curve~\eqref{eq:hyperelliptic_curve}, depending on the choice of component $X_+$ or $X_{-}$ containing the essential singularity $p_0$. Combinatorially, this choice is encoded by selecting the matroid $\mathcal{M}_{\aa,v_i}$ as in \eqref{eq:matroid_v1} and \eqref{eq:matroid_v2}. Therefore, using the correspondence between ${\cal B}_{\aa,v_i}$ and the set of Delaunay vertices $\DaQ$, we label each Delaunay vertex with a corresponding base $I\in {\cal B}_{\aa,v_i}$. We consider the rational map 
\begin{align}\label{eq:psi_Simo}
    \psi_{[{\bf k}],v_1}\, : \, \CC^{2g+1} &\dashrightarrow \KWPB\\
(\underline{\beta},\underline{\kappa}) &\longmapsto ((\alpha_I)_{I\in {\cal B}_{\aa,v_1}},(\UU,\VV,\WW)).\nonumber
\end{align}
with image given by
\begin{equation}\label{eq:image_v1}
\displaystyle
    \alpha_{I} \,=\, \exp\left[ \frac{1}{2} \cc_I^T R \cc_I\right] \cdot \exp[ \cc_I^T {\bf D}] ,
\qquad
\displaystyle \begin{bmatrix}
        \UU \,\, \VV \,\, \WW 
    \end{bmatrix}\, =\, B \, \cdot K,
\end{equation}
where $I\in{\cal B}_{\aa,v_1}$, ${\bf D}=(\log \beta_1,\dots,\log \beta_g)$, and the matrices $B$ and $K$ are as in~\Cref{lem:UVW}. Analogously, the map $\psi_{[{\bf k}],v_2}$ has image 
\textbf{\begin{equation}\label{eq:image_v2}
\displaystyle
    \alpha_{\bar I} \,=\, \exp\left[ \frac{1}{2} \cc_{\bar I}^T R \cc_{\bar I}\right] \cdot \exp[ \cc_{\bar I}^T {\bf D}] ,
\qquad
\displaystyle \begin{bmatrix}
        \UU \,\, \VV \,\, \WW 
    \end{bmatrix}\, =\, -B \, \cdot K,
\end{equation}}
where ${\bar I}=[n]\setminus I\in{\cal B}_{\aa,v_2}$.

\begin{theorem}\label{theo:Hirota_parametrization}
The map $\psi_{{[{\bf k}],v_i}}$ for $i=1,2$ is a birational map onto its image $\mathcal{H}_{\Gamma,[{\bf k}],v_i}$, which is an irreducible algebraic variety of dimension $2g+1$. Moreover, 
\begin{enumerate}
\item The variety $\mathcal{H}_{\Gamma,[{\bf k}],v_1}$ (resp. $\mathcal{H}_{\Gamma,[{\bf k}],v_2}$)  parameterizes the family of KP multi-solitons in $\Gr(k,n)$ (resp. $\Gr(n-k,n)$) arising from the degeneration of the hyperelliptic curve in~\eqref{eq:hyperelliptic_curve}. 
\item For any $(\underline{\beta},\underline{\k})$, the KP solutions $u_{[{\bf k}],v_1} (x,y,t)$ and $u_{[{\bf k}],v_2} (x,y,t)$ given by the image of the map $\psi_{[{\bf k}],v_i}$, for $i=1,2$, are mapped to each other via space--time inversion $(x,y,t) \mapsto (-x,-y,-t)$:
\begin{equation}\label{eq:u_for_Vi}
    u_{[{\bf k}],v_2} (x,y,t) \,=\,u_{[{\bf k}],v_1} (-x,-y,-t).
\end{equation}
\end{enumerate}
\end{theorem}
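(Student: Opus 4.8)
The plan is to break the statement into three logically independent pieces: (i) birationality of $\psi_{[{\bf k}],v_i}$ onto its image and the dimension count; (ii) the fact that the image parametrizes KP multi-solitons in the correct Grassmannian; and (iii) the space-time inversion identity \eqref{eq:u_for_Vi}. For (i), I would first observe that by \Cref{lem:UVW} the $(\UU,\VV,\WW)$ part of $\psi_{[{\bf k}],v_i}$ depends only on $\underline\kappa$ via $\pm B\cdot K$, and that the $g$ parameters $\k_1,\dots,\k_{g+1}$ are recoverable up to the obvious normalization from the coordinates $U_j = \pm(\k_1-\k_{j+1})$ (or their permutation), since $n=g+1$ and the $\k_i$ are pairwise distinct; hence the $\underline\kappa$-dependence is generically injective. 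Then, holding $\underline\kappa$ fixed, the coefficients $\alpha_I = \exp[\tfrac12 \cc_I^T R\cc_I]\exp[\cc_I^T{\bf D}]$ are monomials in $\beta_1,\dots,\beta_g$ (since ${\bf D}=(\log\beta_1,\dots,\log\beta_g)$) with the factor $\exp[\tfrac12\cc_I^TR\cc_I]$ a fixed nonzero scalar depending only on $\underline\kappa$. Choosing the $g$ bases $I$ obtained from the Delaunay vertices $\cc_i = {\bf e}_i$ (which lie in $\DaQ$ for $\aa_{I_k}$), the corresponding ratios $\alpha_{I(\cc_i)}/\alpha_{I(\mathbf 0)}$ are, up to fixed scalars, exactly $\beta_1,\dots,\beta_g$, so $\underline\beta$ is recovered rationally. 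This gives a rational inverse and shows $\dim \mathcal{H}_{\Gamma,[{\bf k}],v_i} = 2g+1$; irreducibility follows since it is the closure of the image of an irreducible variety $\CC^{2g+1}$ under a rational map.

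For part (1), I would compare the parametrized $\tau$-function with the Grassmannian $\tau$-function \eqref{eq:tauGrass}. Plugging \eqref{eq:image_v1} into \eqref{eq:tau_from_theta} and using $\zz = \UU x + \VV y + \WW t + {\bf D}$ with $(\UU,\VV,\WW) = B\cdot K$, the exponent attached to a Delaunay vertex $\cc_I$ becomes $\cc_I^T B\,(K\cdot(x,y,t)^T)$; by the banana-graph dictionary of \Cref{sec3} (\Cref{prop:im_proj_hypercube,thm:BTD}), $B^T\cc_I + {\bf s}_\aa$ is the indicator vector $\sum_{i\in I}{\bf e}_i$ of a base $I\in\binom{[n]}{k}$, so $\cc_I^T B\cdot K$ reproduces, up to an overall $I$-independent factor $E_{[n]\setminus I_k}$ or similar, the exponential $E_I(x,y,t) = \prod_{j\in I}\exp[\k_j x + \k_j^2 y + \k_j^3 t]$ of \eqref{eq:KJ_EJ}. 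The coefficient $\alpha_I$ must then be matched with $A_J K_J$; here I would use the explicit form of $R$ in \eqref{eq:entries_limitR} together with \eqref{eq:AbelJacobi}–\eqref{eq:Dparameter}, computing $\tfrac12\cc_I^T R\cc_I$ in closed form to show the resulting coefficient factors as (a Vandermonde-type minor $K_I$)$\times$(a nonnegative-combination Plücker coordinate $A_I$ of a point in $\Gr(k,n)$ or $\Gr(n-k,n)$). The matroid underlying this point is $\mathcal{M}_{\aa,v_i}$ by \Cref{prop:matroid_A} (\Cref{thm:delsetismatroid}), which is $U_{k,n}$ (resp.\ $U_{n-k,n}$), so the soliton lives in the claimed Grassmannian. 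That this is indeed a KP $\tau$-function then follows because it is literally of the Wronskian form \eqref{eq:tauGrass}; alternatively one invokes that these are the $\varepsilon\to 0$ limits of the finite-gap solutions \eqref{eq:kpthetasol} on $X_\varepsilon$, which solve KP for every $\varepsilon$.

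Part (2) is the cleanest. Comparing \eqref{eq:image_v1} and \eqref{eq:image_v2}: the coefficients $\alpha_\cc$ are identical as functions of $(\underline\beta,\underline\kappa)$ since $R$ and ${\bf D}$ are unchanged, while $(\UU,\VV,\WW)$ is replaced by $(-\UU,-\VV,-\WW)$. Passing from $v_1$ to $v_2$ also swaps the labelling of Delaunay vertices by $I\leftrightarrow\bar I = [n]\setminus I$, but since $\tau_{\DaQ}$ is a sum over all of $\DaQ$ this is merely a reindexing and does not change the function. Therefore $\tau_{[{\bf k}],v_2}(x,y,t) = \tau_{\DaQ}\bigl(\zz\bigr)$ with $\zz = -\UU x - \VV y - \WW t + {\bf D}$, i.e.\ $\tau_{[{\bf k}],v_2}(x,y,t) = \tau_{[{\bf k}],v_1}(-x,-y,-t)$; applying $2\partial_x^2\log(\cdot)$ and noting that this operator is invariant under $x\mapsto -x$ yields \eqref{eq:u_for_Vi}. (Geometrically this is just the statement that moving the essential singularity $p_0$ from $X_+$ to $X_-$ reverses all the second-kind periods, consistent with \Cref{lem:UVW}.)

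\medskip

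The main obstacle I anticipate is the coefficient identification in part (1): showing that $\exp[\tfrac12\cc_I^T R\,\cc_I + \cc_I^T{\bf D}]$, with $R$ given by the somewhat involved logarithmic expressions \eqref{eq:entries_limitR}, factors cleanly as $K_I\cdot A_I$ with $A_I$ a valid (and, in the positive setting, nonnegative) Plücker vector. This requires a careful bookkeeping of which $\k_{i+1}-\k_1$ and $\k_{i+1}-\k_{j+1}$ factors appear with which exponents as $\cc_I$ ranges over $\DaQ$, and matching them against the Vandermonde products $K_I = \prod_{i<j,\,i,j\in I}(\k_j-\k_i)$; the squares in \eqref{eq:entries_limitR} and the factor $\tfrac12$ conspire to give the correct first powers, but verifying this and pinning down the residual factor as $A_I$ (equivalently, as the boundary-measurement Plücker coordinate of $\mathcal{M}_{\aa,v_i}$ via \eqref{eq:AbelJacobi}) is where the real work lies. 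Everything else is either a dimension/irreducibility formality or a direct comparison of two explicit sums.
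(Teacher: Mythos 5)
Your decomposition into (i) birationality/dimension, (ii) identification with a Grassmannian $\tau$-function, and (iii) space--time inversion is exactly the paper's, and your treatment of (iii) matches the paper's argument (the $\alpha$'s are unchanged, $(\UU,\VV,\WW)$ is negated, and $\partial_x^2\log$ is even in $x$). However, there is a genuine gap in part (ii), which is precisely the step you flag as ``where the real work lies'' but do not carry out. It is not enough to show that $\exp[\tfrac12\cc_I^TR\cc_I+\cc_I^T{\bf D}]$ factors as $K_I$ times \emph{some} residual coefficient $A_I$: to conclude that $\tau_{\cal D}$ is of the Wronskian form \eqref{eq:tauGrass} one must know that the collection $(A_I)$ is an actual vector of maximal minors of a $k\times n$ matrix, i.e.\ satisfies the Pl\"ucker relations. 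The paper resolves this by exhibiting the matrix explicitly in reduced row echelon form, \eqref{eq:matrix_A}, and proving in \Cref{prop:minor_iden} that its minors equal $\alpha_J K_{I_k}/K_J$ with $\alpha_J$ as in \eqref{eq:alpha_J}; that proof in turn rests on a nontrivial determinantal identity for Vandermonde minors (\Cref{lem:pluecker_vandermonde}, proved by induction on the size of the exchanged index sets). Without this construction, or an equivalent verification of the Pl\"ucker relations, the claim that the image lands in $\Gr(k,n)$ is unproved; your fallback of invoking the $\varepsilon\to0$ limit of finite-gap solutions would at best show that $u$ solves KP, not that it is a $(k,n)$ multi-soliton.

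Two smaller slips in part (i): the parameters $\underline\kappa$ are \emph{not} recoverable from $\UU$ alone, since $U_j=\pm(\k_1-\k_{j+1})$ only determines differences and leaves $\k_1$ free; the paper inverts using both $\UU$ and $\VV$ via $\k_{j+1}=(V_j-U_j^2)/(2U_j)$ and $\k_1=(V_j+U_j^2)/(2U_j)$. Also, the Delaunay vertices of $\aa_{I_k}$ have the form $\sum_l(\ee_{i_l-1}-\ee_{j_l-1})$ with $i_l\in I_k$, $j_l\notin I_k$ and $\ee_0=\mathbf 0$, so the vectors $+\ee_i$ you propose to use for recovering $\underline\beta$ are generally not in $\DaQ$; one should instead use the bases $I_k\setminus\{1\}\cup\{j+1\}$ and $I_k\setminus\{i+1\}\cup\{n\}$ as the paper does. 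Both issues are repairable, but as written those steps would fail.
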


We explain the details of the construction in \Cref{sec:parametrization}: we fix the vertex $v_1$, and prove \Cref{theo:Hirota_parametrization} in the case of $\psi_{[\kk],v_1}$. The idea of the proof is to verify that the function $\tau_{\cal{D}}(x,y,t)$ in \eqref{eq:tau_from_theta} constructed as the image of $\psi_{[\kk],v_1}(\underline{\beta},\underline{\k})$ is a $\tau$--function for the family of KP multi-solitons associated with $([\kk],v_1)$, and show that such solutions form a subvariety in $\Gr(k,n)$, $n=g+1$. Then the property that the family of KP solitons associated with $v_2$ and $[\kk]$ is a subvariety in the dual Grassmannian $\Gr(n-k,n)$ is a direct consequence of the fact that the matroids ${\cal M}_{\aa,v_1}$, ${\cal M}_{\aa,v_2}$ are dual to each other. 

\begin{proof}[Proof of \Cref{theo:Hirota_parametrization}, Item~2]
We separately prove Item~2, as it immediately follows by inspecting \eqref{eq:image_v1} and \eqref{eq:image_v2}. Indeed, \Cref{lem:UVW} implies that, if a vector $(\UU,\VV,\WW)$ belongs to the image of the map $\psi_{[{\bf k}],v_1}$, then its opposite $(-\UU,-\VV,-\WW)$ belongs to the image of the map $\psi_{[{\bf k}],v_2}$. Finally, for any $(\underline{\beta},\underline{\k})\in \CC^{2g+1}$, $
 \alpha_{\bar I} (\underline{\beta},\underline{\k}) \,=\, \alpha_{I} (\underline{\beta},\underline{\k}), 
 $
with $I\in {\cal B}_{\aa,v_1}$, ${\bar I}\in {\cal B}_{\aa,v_2}$, and ${\bar I}= [n]\setminus I$.
\end{proof}

In the following, we omit the subscript $v_1$ in the notation, unless necessary. We start constructing a $k\times n$ matrix $A$ and computing its maximal minors.

Let $\bar{\aa}=\aa_{I_k} \in [{\bf k}]$ be the Voronoi vertex given in~\eqref{eq:a_minlex}. 
Any basis $J\in {\cal B}_{\bar{\aa}}$ can be obtained from $I_k$ as $J =I_k \setminus \{ i_1,\dots, i_s\} \cup \{j_1,\dots,j_s\}$ where $1\le s\le \min \{k, n-k\}$ and $i_l,j_m\in[n]$. Then the corresponding Delaunay vertex $\cc_J$ satisfies
\[
B^T \cc_J + {\bf s}_{\bar{\aa}}\, = \sum_{i \in I_k\setminus J} \tilde{\ee}_i + \sum_{j\in J\setminus I_k} \tilde{\ee}_j, \qquad \text{and} \qquad \cc_J \,=\, \sum_{l=1}^s \left( {\ee}_{i_l-1} - {\ee}_{j_l-1} \right),
\]
where ${\bf s}_{\bar{\aa}}$ has $1$ in the first $k$ coordinates and $0$ otherwise, and $\tilde{\ee}_i$ and $\ee_i$ denote the $i$-th standard basis vector in $\RR^n$ and $\RR^g$, respectively, with ${\ee}_0$ the null vector. The image of the $\alpha_J$ from  \eqref{eq:image_v1} is explicitly given by 
\begin{equation}\label{eq:alpha_J}
    \alpha_{J}\,=\,\frac{\displaystyle \prod_{1\le l<m\le s}  (\kappa_{i_m} -\kappa_{i_l})^2 (\kappa_{j_m} -\kappa_{j_l})^2}{\displaystyle \prod_{1\le l,m \le s}  (\kappa_{j_m} -\kappa_{i_l})^2} \prod_{l=1}^g\beta_l^{-(B^T\cc_J)_{l+1}}, \qquad J\in {\cal B}_{\bar{\aa}},
\end{equation}
where the numerator is set to $1$ when $s=1$. Let $A$ be the $k\times n$ matrix in reduced row echelon form (RREF) with entries
\begin{equation}\label{eq:matrix_A}
    A_{ij} \,=\, \left\{  \begin{array}{ll} \delta_{ij} &\quad  \text{ if }  i,j\le k,\\
\displaystyle \frac{\beta_{i-1}}{\beta_{j-1} (\k_j-\k_i)^2} \prod_{\substack{l\in I_k\\l\not = i}} \frac{\k_i-\k_l}{\k_j-\k_l} &\quad \text{ if }i \le k \text{ and } k<j\le n,
\end{array}
\right.
\end{equation}
where $\beta_0=1$. The following result shows that the $\alpha_J$ described in \eqref{eq:alpha_J} can be viewed as maximal minors of the matrix $A$ up to a multiplicative factor.

\begin{proposition}\label{prop:minor_iden}
    Let $\alpha_J$, with $J\in {\cal B}_{\bar{\aa}}$ be as in \eqref{eq:alpha_J}, and $A\in \Gr(k,n)$ be the matrix defined in \eqref{eq:matrix_A}. Then, for any $J\in {\cal B}_{\bar{\aa}}$ the $J$-th maximal minor of $A$, denoted $A_J$, satisfies
\begin{equation}\label{eq:minor_identity}
A_J \,=\, \alpha_J \frac{K_{I_k}}{K_J},  
\end{equation}
with $K_{I_k}$ and  $K_{J}$ as in~\eqref{eq:KJ_EJ}.
\end{proposition}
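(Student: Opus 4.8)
The plan is to verify the minor identity \eqref{eq:minor_identity} by induction on $s$, the size of the symmetric difference $J \triangle I_k$, matching the combinatorial structure of the Delaunay vertex $\cc_J$ used in \eqref{eq:alpha_J}. The base case $s=0$ gives $J = I_k$, where $A_{I_k} = \det(\mathrm{Id}_k) = 1$ by the RREF form \eqref{eq:matrix_A}, while $\alpha_{I_k} = 1$ (the numerator in \eqref{eq:alpha_J} is set to $1$ and $\cc_{I_k} = \mathbf{0}$), so the ratio $K_{I_k}/K_J = 1$ closes this case. For $s = 1$, where $J = I_k \setminus \{i\} \cup \{j\}$ with $i \le k < j$, the minor $A_J$ is — up to sign — the single off-diagonal entry $A_{ij}$ because of the RREF structure: deleting column $i$ and adjoining column $j$ leaves an identity block on the remaining $k-1$ pivot columns, so $A_J = \pm A_{ij}$. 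Then \eqref{eq:matrix_A} for $A_{ij}$, the explicit $\alpha_J$ from \eqref{eq:alpha_J} with $s=1$ (numerator $=1$), and a direct Vandermonde computation of $K_{I_k}/K_J$ must be checked to agree. This is a routine but sign-sensitive calculation.

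For the inductive step, the cleanest route is to invoke a known determinantal formula for maximal minors of a matrix in RREF — specifically, for $A$ as in \eqref{eq:matrix_A}, the minor $A_J$ equals (up to a sign controlled by the positions of $I_k \setminus J$ and $J \setminus I_k$) the $s \times s$ minor of the "non-pivot" block of $A$ with rows indexed by $I_k \setminus J = \{i_1, \dots, i_s\}$ and columns indexed by $J \setminus I_k = \{j_1, \dots, j_s\}$. That $s \times s$ block has $(l,m)$ entry $A_{i_l j_m}$, a Cauchy-type expression. The determinant of such a Cauchy-like matrix factors explicitly: the $\beta$-prefactors pull out as $\prod_l \beta_{i_l - 1} \prod_m \beta_{j_m - 1}^{-1}$ (matching the $\prod_l \beta_l^{-(B^T\cc_J)_{l+1}}$ factor in \eqref{eq:alpha_J} once one tracks that $(B^T \cc_J)_{l+1}$ records exactly whether $l+1 \in I_k \setminus J$ or $J \setminus I_k$), while the rational part produces, via the Cauchy determinant identity, the factor $\prod_{l<m}(\k_{i_m} - \k_{i_l})(\k_{j_m} - \k_{j_l}) / \prod_{l,m}(\k_{j_m} - \k_{i_l})$ together with residual Vandermonde terms $\prod_{l, p}(\k_{i_l} - \k_p)/(\k_{j_l} - \k_p)$ over $p \in I_k \setminus \{i_1,\dots,i_s\}$. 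These residual terms are precisely what is absorbed into the ratio $K_{I_k}/K_J$: writing both Vandermonde determinants over their index sets and cancelling the common factors coming from $I_k \cap J$, one is left with exactly the cross-terms that complete the identity.

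The main obstacle — and the place where care is genuinely required — is bookkeeping the signs. The sign of $A_J$ relative to the $s \times s$ Cauchy minor depends on the interleaving of the deleted pivot indices $\{i_1, \dots, i_s\}$ among $\{1,\dots,k\}$ and the inserted indices $\{j_1, \dots, j_s\}$ among $\{k+1, \dots, n\}$; the sign of $K_{I_k}/K_J$ depends similarly on reordering the columns of the Vandermonde. One must check these two sign contributions cancel for every $J$, not just generically. My expectation is that, after fixing the convention that $J$ is listed in increasing order and using the standard Laplace-expansion sign $(-1)^{\sum (i_l - l)}$ from the pivot block together with the matching transposition count in the Vandermonde, the signs agree identically; but this is the step I would write out in full rather than leave to the reader. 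Everything else reduces to the Cauchy determinant identity and elementary manipulation of products of differences $\k_a - \k_b$, which I would present compactly.
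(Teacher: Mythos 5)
Your proposal is correct in outline but takes a genuinely different route from the paper. Both arguments begin the same way: the RREF structure of $A$ reduces $A_J$ (up to a sign) to the $s\times s$ minor of the non-pivot block with rows $I_k\setminus J$ and columns $J\setminus I_k$. From there the paper expands that minor as an alternating sum over permutations, factors out the common rational prefactors from each summand, and evaluates the remaining signed sum by a bespoke identity (its Lemma on Vandermonde minors, proved by induction via Pl\"ucker relations); you instead propose to evaluate the block determinant in one stroke via the Cauchy determinant identity. These are essentially two presentations of the same computation --- the paper's auxiliary lemma is equivalent to the Cauchy identity after clearing denominators --- but your version outsources the combinatorial core to a classical formula, which is arguably cleaner, while the paper's is self-contained. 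One point you gloss over and must address for the Cauchy step to apply: the block entries carry a \emph{squared} denominator $(\kappa_j-\kappa_i)^{-2}$, and a matrix with entries $1/(\kappa_j-\kappa_i)^2$ does not factor by the Cauchy identity. The fix is the cancellation
\[
\frac{1}{(\kappa_j-\kappa_i)^{2}}\prod_{\substack{l\in I_k\\ l\neq i}}\frac{\kappa_i-\kappa_l}{\kappa_j-\kappa_l}
=\Bigl(\prod_{\substack{l\in I_k\\ l\neq i}}(\kappa_i-\kappa_l)\Bigr)\cdot\frac{1}{\prod_{l\in I_k}(\kappa_j-\kappa_l)}\cdot\frac{1}{\kappa_j-\kappa_i},
\]
which exhibits the entry as a pure row factor times a pure column factor times a genuine Cauchy kernel $1/(\kappa_j-\kappa_i)$; after extracting those multipliers together with the $\beta$-factors, the exponents of all the difference products do match $\alpha_J K_{I_k}/K_J$, and only the sign bookkeeping you already flagged remains. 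With that factorization made explicit, your argument goes through.
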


\begin{proof}
It is sufficient to prove  (\ref{eq:minor_identity}) in the case $\beta_j=1$ for $j\in [g]$.
Let $J= I_k\setminus \{ i_1,\dots, i_s\} \cup \{j_1,\dots, j_s\}$ as above, and define $\sigma_J= \sum_{l=1}^s (k-i_l)$. Let us check the case $s=1$:
\[
A_{I_k\setminus \{i\} \cup \{j\}} \, = \, (-1)^{k-i} A_{ij} \, = \, \frac{(-1)^{k-i}}{(\k_j-\k_i)^{2} }\, \prod_{\substack{l\in I_k\\ l\not = i}} \frac{\k_i-\k_l}{\k_j-\k_l} \, = \, \alpha_J  \,\frac{K_{I_k}}{K_{I_k\setminus \{i\} \cup \{j\}}}.
\]
If $s=2$ and $J= I_k\setminus \{ i_1,i_2\} \cup \{j_1,j_2\}$, then
\[
\begin{array}{ll}
A_J \!\!\!\!&= (-1)^{\sigma_J} \left( A_{i_2 \, j_2} A_{i_1 \, j_1} -A_{i_2 \, j_1} A_{i_1 \, j_2}\right)\\
&=\displaystyle (-1)^{\sigma_J} \biggl( \frac{1}{(\k_{j_2}-\k_{i_2})^2(\k_{j_1}-\k_{i_1})^2} \prod_{l\in I_k\setminus\{i_1\}} \frac{(\k_{i_1}-\k_l)}{(\k_{j_1}-\k_l)} \prod_{l\in I_k\setminus\{i_2\}} \frac{(\k_{i_2}-\k_l)}{(\k_{j_2}-\k_l)}  - (j_1 \leftrightarrow  j_2) \biggr),\end{array}
\]
where $(j_1 \leftrightarrow  j_2)$ indicates the first summand where we switch $j_1$ with $j_2$. Thus we have:
\[
\begin{array}{ll}
A_J \!\!\!\!&=\displaystyle \frac{(-1)^{\sigma_J}(\k_{i_2}-\k_{i_1})^2\cdot R_{12}}{(\k_{j_2}-\k_{i_2})^2(\k_{j_1}-\k_{i_1})^2(\k_{j_2}-\k_{i_1})^2(\k_{j_1}-\k_{i_2})^2} \prod_{\substack{i\in I_k\\i\neq i_1,i_2}}  \frac{(\k_{i_1}-\k_i)(\k_{i_2}-\k_i)}{(\k_{j_1}-\k_i)(\k_{j_2}-\k_i)}   \\
&\\
&=\displaystyle  \frac{(-1)^{\sigma_J}(\k_{i_2}-\k_{i_1})^3(\k_{j_2}-\k_{j_1})}{(\k_{j_2}-\k_{i_2})^2(\k_{j_1}-\k_{i_1})^2(\k_{j_2}-\k_{i_1})^2(\k_{j_1}-\k_{i_2})^2} \prod_{\substack{i\in I_k\\i\not = i_1,i_2}}  \frac{(\k_{i_1}-\k_i)(\k_{i_2}-\k_i)}{(\k_{j_1}-\k_i)(\k_{j_2}-\k_i)}\,=\, \displaystyle \alpha_J \frac{K_I}{K_J},
\end{array}
\]
where $R_{12} = (\k_{j_2}-\k_{i_2}) (\k_{j_1}-\k_{i_1})-(\k_{j_2}-\k_{i_1}) (\k_{j_1}-\k_{i_2})$. We now consider the general case.  Define $\nu_s\coloneqq(s-1)(s-2)/2$. Then $A_J = (-1)^{\sigma_J} \sum_{\pi} (-1)^{\varepsilon_\pi} A_{i_1 \, j_{\pi(1)}} \cdots A_{i_s \, j_{\pi(s)}}$, where the sum runs over all permutations of $\{1,\dots,s\}$ and $\varepsilon_\pi$ is the parity of the permutation. It is straightforward to check
\[
\begin{array}{ll}
A_J \!\!\!\!&=\! \displaystyle (-1)^{\sigma_J} \sum_{\pi} (-1)^{\varepsilon_\pi} \prod_{r=1}^s \biggl( (\k_{j_{\pi(r)}} -\k_{i_r})^{-2} \prod_{l\in I_k\setminus\{i_r\}} \frac{(\k_{i_r}-\k_l)}{(\k_{j_{\pi(r)}} -\k_l)} \biggr)\\
&=\!\displaystyle (-1)^{\sigma_J+\nu_s}\! \, \biggl(\prod_{r=1}^s \prod_{l\in I_k\cap J}\frac{(\k_{i_r}-\k_l)}{(\k_{j_{\pi(r)}}-\k_l)} \biggr) \frac{\displaystyle \prod_{1\le l<m\le s} (\k_{i_m}-\k_{i_l})^2}{\displaystyle \prod_{1\le l,m\le s} (\k_{j_{\pi(m)}}-\k_{i_l})^2} \, \!\!\sum_{\pi} (-1)^{\varepsilon_\pi}
\prod_{r=1}^s \prod_{\substack{l\in I_k\setminus J\\l\not = i_r}} (\k_{j_{\pi(r)}}- \k_l)
\\
&=\!\displaystyle  (-1)^{\sigma_J+\nu_s} \, \biggl(\prod_{r=1}^s \prod_{l\in I_k\cap J}\frac{(\k_{i_r}-\k_l)}{(\k_{j_{\pi(r)}}-\k_l)} \biggr) \frac{\displaystyle \prod_{1\le l<m\le s} (\k_{i_m}-\k_{i_l})^2}{\displaystyle \prod_{1\le l,m\le s} (\k_{j_{\pi(m)}}-\k_{i_l})^2} \, K_{I_k\setminus J}\, K_{J\setminus I_k},
\end{array}
\]
where the last equality uses~\Cref{lem:pluecker_vandermonde}. Then \eqref{eq:minor_identity} follows immediately.
\end{proof}

\begin{lemma}\label{lem:pluecker_vandermonde}
Let $2\le s \le k$ be fixed. Let 
$I_s=\{i_1<\cdots < i_s\}$, $J_s=\{j_1<\cdots < j_s\}$, with $1\le i_1< \cdots < i_s < j_1 < \cdots <j_s\le n$. Define $\nu_s=(s-1)(s-2)/2$.
Then, the identity holds
\begin{equation}\label{eq:pluecker_vandermonde}
    \sum_{\pi_s} (-1)^{\varepsilon_{\pi_s}+\nu_s}
\prod_{r=1}^s \prod_{\substack{l\in I\setminus J\\l\not = i_r}} (\k_{j_{\pi_s(r)}}- \k_l)
\, =\, K_{I_s} \, K_{J_s},
\end{equation}
where $\pi_s$ range over all permutations of $\{1,\dots,s\}$, and $\varepsilon_{\pi_s}$ denote their parity.
\end{lemma}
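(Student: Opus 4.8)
The key idea is to recognise the left-hand side of \eqref{eq:pluecker_vandermonde} as a single $s\times s$ determinant and then factor that determinant through Vandermonde matrices. For $r=1,\dots,s$ I set $L_r(x)\coloneqq\prod_{\substack{l\in I_s\\ l\neq i_r}}(x-\k_l)$, a monic polynomial of degree $s-1$; since $i_s<j_1$ the sets $I_s$ and $J_s$ are disjoint, so the inner product appearing in \eqref{eq:pluecker_vandermonde} is precisely $L_r(\k_{j_{\pi_s(r)}})$. Hence the left-hand side equals $(-1)^{\nu_s}\sum_{\pi_s}(-1)^{\varepsilon_{\pi_s}}\prod_{r=1}^s L_r(\k_{j_{\pi_s(r)}})=(-1)^{\nu_s}\det\bigl(L_r(\k_{j_m})\bigr)_{1\le r,m\le s}$, so everything reduces to evaluating this determinant.

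To evaluate it, I would use that the $L_r$ are, up to normalisation, the Lagrange basis polynomials for the nodes $\k_{i_1},\dots,\k_{i_s}$: indeed $L_r(\k_{i_{r'}})=0$ for $r'\neq r$ and $L_r(\k_{i_r})=\prod_{l\in I_s,\,l\neq i_r}(\k_{i_r}-\k_l)$. Expanding $L_r(x)=\sum_{d=0}^{s-1}c_{rd}x^{d}$ with $c_{r,s-1}=1$, I collect the coefficients in $C=(c_{rd})$ and introduce the Vandermonde matrices $W=(\k_{i_{r'}}^{d})$ and $V=(\k_{j_m}^{d})$, with $d=0,\dots,s-1$. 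Then $\bigl(L_r(\k_{j_m})\bigr)=CV$ while $\bigl(L_r(\k_{i_{r'}})\bigr)=CW$ is diagonal, so taking determinants gives $\det C=\bigl(\prod_r L_r(\k_{i_r})\bigr)/\det W$. Now $\det W=K_{I_s}$ and $\det V=K_{J_s}$ are standard Vandermonde evaluations (with the sign convention of \eqref{eq:KJ_EJ}), and pairing the ordered pair $(r,r')$ with $(r',r)$ gives $\prod_r L_r(\k_{i_r})=\prod_{r\neq r'}(\k_{i_r}-\k_{i_{r'}})=(-1)^{\binom{s}{2}}K_{I_s}^2$. Putting these together yields $\det\bigl(L_r(\k_{j_m})\bigr)=\det C\cdot\det V=(-1)^{\binom{s}{2}}K_{I_s}K_{J_s}$, and substituting into the first step proves \eqref{eq:pluecker_vandermonde}.

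The part that needs care rather than new ideas — and the reason the constant $\nu_s=(s-1)(s-2)/2$ is inserted in the statement — is the sign bookkeeping: reconciling the permutation sign $(-1)^{\varepsilon_{\pi_s}}$, the prefactor $(-1)^{\nu_s}$, and the parity $\binom{s}{2}$ produced by the node-pairing, while keeping the orderings $i_1<\dots<i_s$ and $j_1<\dots<j_s$ consistent with the definition of $K_{I_s}$ and $K_{J_s}$. All the remaining steps (two Vandermonde determinants and the factorisations $CV$ and $CW$) are routine linear algebra, and the mechanics are already transparent in the case $s=2$. An essentially equivalent route that bypasses the coefficient matrix $C$ is to observe that the left-hand side is separately antisymmetric and of degree $s-1$ in the two groups of variables $(\k_{j_m})$ and $(\k_{i_r})$, hence is $K_{I_s}K_{J_s}$ times a pure constant, which one then pins down by comparing a single leading coefficient.
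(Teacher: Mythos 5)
Your reduction of the left-hand side to $(-1)^{\nu_s}\det\bigl(L_r(\kappa_{j_m})\bigr)$ and the factorization of that determinant through the Lagrange/Vandermonde matrices $C$, $V$, $W$ is a genuinely different route from the paper, which instead argues by induction on $s$: it expands the permutation sum according to the value $m=\pi_s(s)$, applies the inductive hypothesis to the resulting $(s-1)$-fold inner sum, and closes the induction with the Plücker relations for the $s\times n$ Vandermonde matrix. Your argument is shorter and non-inductive, and your alternative remark (antisymmetry and degree count in the two groups of variables force the answer to be a constant multiple of $K_{I_s}K_{J_s}$) is also a legitimate shortcut.

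The gap is exactly the step you postpone. Carrying out your own bookkeeping: $\det C=(-1)^{\binom{s}{2}}K_{I_s}$, hence $\det\bigl(L_r(\kappa_{j_m})\bigr)=(-1)^{\binom{s}{2}}K_{I_s}K_{J_s}$, and the left-hand side of \eqref{eq:pluecker_vandermonde} becomes $(-1)^{\nu_s+\binom{s}{2}}K_{I_s}K_{J_s}=(-1)^{(s-1)^2}K_{I_s}K_{J_s}=(-1)^{s-1}K_{I_s}K_{J_s}$, since $\nu_s+\binom{s}{2}=\binom{s-1}{2}+\binom{s}{2}=(s-1)^2$. So ``substituting into the first step'' does not prove \eqref{eq:pluecker_vandermonde}; it proves it only for odd $s$. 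This is not a slip in your determinant computation: at $s=2$, taking $(\kappa_{i_1},\kappa_{i_2},\kappa_{j_1},\kappa_{j_2})=(0,1,2,3)$, the sum as literally written is $(\kappa_{j_1}-\kappa_{i_2})(\kappa_{j_2}-\kappa_{i_1})-(\kappa_{j_1}-\kappa_{i_1})(\kappa_{j_2}-\kappa_{i_2})=3-4=-1$, while $K_{I_2}K_{J_2}=1$. (The paper's own base case silently writes the identity-permutation term as $(\kappa_{j_1}-\kappa_{i_1})(\kappa_{j_2}-\kappa_{i_2})$, i.e.\ keeping the factor with $l=i_r$ instead of excluding it, which is how it lands on the opposite sign.) The identity is correct with $\nu_s=\binom{s}{2}=s(s-1)/2$ in place of $(s-1)(s-2)/2$, or equivalently with an extra factor $(-1)^{s-1}$ on the right. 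You need to finish the sign computation and record the corrected constant; asserting that the signs cancel, when they produce $(-1)^{s-1}$, is the one step of your proposal that fails.
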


\begin{proof}
    The proof consists of verifying the equivalence of the above identity with the usual Pl\"ucker relations for the Vandermonde submatrix of size $s\times n$ with entry $(i,j)$ given by $\kappa^i_j$. The proof proceeds by induction on $s$. In particular, if 
    $s=2$, and $I=\{i_1<i_2\}$ and $J=\{j_1<j_2\}$, the left hand side of \eqref{eq:pluecker_vandermonde} is
    \[
    \begin{array}{l}
(\k_{j_1}-\k_{i_1}) (\k_{j_2}-\k_{i_2})-(\k_{j_1}-\k_{i_2})(\k_{j_2}-\k_{i_1}) =\\
\\
 \displaystyle \quad \, = \,K_{(I_2\setminus \{i_2\} )\cup \{j_1 \}} K_{(J_2\setminus \{j_1\}) \cup\{i_2\}}
    -K_{(I_2\setminus \{i_2\}) \cup \{j_2 \}} K_{(J_2\setminus \{j_2\} )\cup\{i_2\}} = K_{I_2} \, K_{J_2}.
    \end{array}\]
So the base case holds. Assume the identity for any $2\le i\le s-1$ and fix ordered sets 
$I=\{i_1<\cdots<i_s\}$, $J=\{j_1<\cdots<j_s\}$. Then for $i=s$, we get
\[
\begin{array}{l}
 \displaystyle\sum_{\pi_s} (-1)^{\varepsilon_{\pi_s}+\nu_s}
\prod_{r=1}^s \prod_{\substack{l\in I_s\\l\not = i_r}} (\k_{j_{\pi_s(r)}}- \k_l)
  \\
\displaystyle\quad \, = \, \sum_{m=1}^s \sum_{\substack{\pi_s \\ \pi_s(s)=m}}(-1)^{\varepsilon_{\pi_s}+\nu_s}\prod_{p\in[s]\setminus m} (\k_{j_p}-\k_{i_s})\, \prod_{l=1}^{s-1} (\k_{j_m}-\k_{i_l})\,
\prod_{r=1}^{s-1} \, \prod_{\substack{l=1 \\ l\not =\pi_s(r)}}^{s-1} (\k_{j_{\pi_s(r)}} - \k_{i_l}) 
\\
\displaystyle \quad \, = \,
\sum_{m=1}^s (-1)^m \prod_{p\in[s]\setminus m} (\k_{j_p}-\k_{i_s})\, \prod_{l=1}^{s-1} (\k_{j_m}-\k_{i_l})\,\sum_{\pi_{s-1}} (-1)^{\varepsilon_{\pi_{s-1}}+\nu_{s-1}}
\prod_{r=1}^{s-1} \, \prod_{\substack{l=1 \\ l\not =\pi_s(r)}}^{s-1} (\k_{j_{\pi_s(r)}} - \k_{i_l})
\\
\displaystyle \quad \, = \, \sum_{m=1}^s (-1)^m \prod_{p\in[s]\setminus m} (\k_{j_p}-\k_{i_s})\, \prod_{l=1}^{s-1} (\k_{j_m}-\k_{i_l})\, K_{I_s \setminus \{i_s\}} \, K_{J_s \setminus \{j_m\}} \,
\\
\displaystyle \quad \,= \,\sum_{m=1}^s (-1)^m \, K_{(I_s \setminus \{i_s\}) \cup \{j_m\}} \, K_{(J_s \setminus \{j_m\}) \cup \{ i_s\}} \, = \, K_{I_s}\, K_{J_s}.
\end{array}
\]    
This computation concludes the proof.
\end{proof}

One immediate consequence of \Cref{prop:minor_iden}  is that the matroid associated with the matrix $A\in \Gr(k,n)$ coincides with the matroid induced by the Delaunay polytope. 
\begin{corollary}\label{prop:matroid_A}
    The matroid of the $k\times n$ matrix $A$ is precisely ${\cal M}_{\bar{\aa},v_1}$.
\end{corollary}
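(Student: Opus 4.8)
The plan is to unwind the definition of the matroid of $A$ and match its base family with $\mathcal{B}_{\bar{\aa},v_1}$ by means of \Cref{prop:minor_iden}. Recall that the matroid $\mathcal{M}(A)$ of a point $A\in\Gr(k,n)$ is the matroid on ground set $[n]$ whose bases are exactly the $k$-subsets $J\subseteq[n]$ with nonvanishing maximal minor $A_J$; equivalently, the column matroid of the $k\times n$ matrix. Since $\mathcal{M}_{\bar{\aa},v_1}$ is by definition a matroid on $[n]$, the assertion reduces to the equality of base families $\{J\in\binom{[n]}{k}:A_J\neq 0\}=\mathcal{B}_{\bar{\aa},v_1}$.

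First I would record, from \Cref{thm:BTD} and the remark immediately following it, that for $\bar{\aa}\in[{\bf k}]$ the matroid $\mathcal{M}_{\bar{\aa},v_1}$ is isomorphic to the uniform matroid $U_{k,n}$. As $\mathcal{M}_{\bar{\aa},v_1}$ already lives on the ground set $[n]$, and the property of having \emph{all} $k$-subsets as bases is preserved under relabeling, this forces $\mathcal{B}_{\bar{\aa},v_1}=\binom{[n]}{k}$. Consequently there is no $J\in\binom{[n]}{k}$ lying outside $\mathcal{B}_{\bar{\aa},v_1}$, so one inclusion is vacuous, and it only remains to show that $A_J\neq 0$ for every $J\in\binom{[n]}{k}$ (that $I_k$ itself is a basis is already clear from the RREF shape of $A$ in \eqref{eq:matrix_A}, which gives $A_{I_k}=1$).

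Next I would invoke \Cref{prop:minor_iden}, which provides the closed form $A_J=\alpha_J\,K_{I_k}/K_J$ for every $J\in\mathcal{B}_{\bar{\aa},v_1}=\binom{[n]}{k}$. The Vandermonde factors $K_{I_k}$ and $K_J$ from \eqref{eq:KJ_EJ} are products of differences $\kappa_b-\kappa_a$ with $a\neq b$, hence nonzero because the $\kappa_i$ are pairwise distinct; and $\alpha_J$, as written out in \eqref{eq:alpha_J}, is a ratio of products of the same kind of nonzero differences times a monomial in $\beta_1,\dots,\beta_g$, which is nonzero on $(\CC^*)^g$. Hence $A_J\neq 0$ for all $J\in\binom{[n]}{k}$, giving the required inclusion and therefore the equality $\mathcal{M}(A)=U_{k,n}=\mathcal{M}_{\bar{\aa},v_1}$.

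I do not expect a substantive obstacle here; the corollary is essentially a bookkeeping consequence of \Cref{prop:minor_iden}. The two points that merit a sentence of care are: that $\mathcal{M}_{\bar{\aa},v_1}$, being a concrete realization of $U_{k,n}$ on the ground set $[n]$, genuinely has all $k$-subsets as bases, so the containment established via \Cref{prop:minor_iden} is an equality of matroids and not merely of a subfamily of bases; and, on the computational side, that the formula of \Cref{prop:minor_iden} was proven under the normalization $\beta_j=1$, which is harmless since the $\beta$-monomial prefactor appearing in $\alpha_J$ is manifestly a unit in $(\CC^*)^g$.
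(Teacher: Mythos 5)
Your proof is correct and takes essentially the same route the paper intends: the corollary is presented there as an immediate consequence of \Cref{prop:minor_iden}, and you have simply made explicit the two ingredients — that $\mathcal{B}_{\bar{\aa},v_1}=\binom{[n]}{k}$ because $\mathcal{M}_{\bar{\aa},v_1}\cong U_{k,n}$ on the ground set $[n]$, and that each $A_J=\alpha_J K_{I_k}/K_J$ is nonzero since the $\kappa_i$ are pairwise distinct and the $\beta_j$ lie in $\CC^*$. Nothing further is needed.
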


We are now ready to prove \Cref{theo:Hirota_parametrization}.

\begin{proof}[Proof of \Cref{theo:Hirota_parametrization}]
    Let $S\subset(\CC)^{2g+1}$ be the locus where at least two of $\k_i$'s coincide. We want to show that 
\begin{equation}\label{eq:tau_1}
    \tau_{{\cal D}}(x,y,t)\,=\,\theta_{\cal D}(\UU x+\VV y+\WW t+{\bf D}) \,=\, \sum_{J\in  {\cal B}_{\bar{\aa}}} \alpha_J \exp[c_J^T\cdot (\UU x+\VV y+\WW t)],
\end{equation}
is a KP $\tau$-function, 
where ${\cal D} = {\cal D}_{\bar{\aa},Q}$ and $((\alpha_J)_{J\in {\cal B}_{\bar{\aa}}}, \UU,\VV,\WW)$ is the image of a point $(\underline{\beta},\underline{\kappa})\in \CC^{2g+1}\setminus S$ via $\psi_{[{\bf k}]}$. 

First, we focus on the image of the vectors $(\UU,\VV,\WW)$. For a basis $J\in {\cal B}_{[{\bf k}]}$, we have
$$\cc_J^T \cdot (\UU x+\VV y+\WW t) \,=\, (B^T \cc_J)^T\cdot K\cdot  {\bf x}\
    \, \,=\, \, \sum_{j=1}^n (B^T \cc_J)_j (\k_jx+ \k_j^2 y + \k_j^3 t),$$
    where $\bf x$ is the column vector with entries $x,y,t$. Passing to the exponential we get 
    \[
\exp[\cc_J^T\cdot (\UU x+\VV y+\WW t)] \, = \, \prod_{j=1}^n \exp[\k_j x+ \k_j^2 y +\k_j^3 t]^{(B^T\cc_J)_j} \,=\, \frac{E_J(x,y,t)}{E_{I_k}(x,y,t)},
    \]
    where in the last equality we have used that 
    $$\exp[{\bf s}_{\bar{\aa}}^T\cdot  (\UU x+\VV y+\WW t)]\, =\, \prod_{i=1}^k \exp[\k_ix + \k_i^2 y+  \k_i^3 t] \, =\,  E_{I_k}(x,y,t),$$ and  $$\exp[(B^T \cc_J +{\bf s}_{\bar{\aa}})^T (\UU x+\VV y+\WW t)] \,=\,\prod_{j\in J}\exp(\k_jx + \k_j^2 y+  \k_j^3 t)  \,=\,  E_J(x,y,t).$$
    Then function in~\eqref{eq:tau_1} is a KP $\tau$--function for the matrix $A$ in \eqref{eq:matrix_A}:
    \[
    \tau_{\cal D} (x,y,t) \,= \,\frac{1}{K_{I_k} E_{I_k}(x,y,t)} \sum_{J\in {\cal B}_{\bar{\aa}}} A_J K_J E_J(x,y,t) 
  \, = \,  \frac{1}{K_{I_k} E_{I_k}(x,y,t)}\tau_A (x,y,t),
    \]
    where in the first equality we have also used~\Cref{prop:minor_iden} and $\tau_A(x,y,t)$ denotes the $\tau$-function associated to the matrix $A$ defined in \eqref{eq:tauGrass}.
    Therefore, the image of $\CC^{2g+1}$ through the map $\psi_{[\kk]}$ gives a KP multi-soliton solution in $\Gr(k,n)$.

   The map $\psi_{[\kk]}$ is invertible outside the locus $S\subset\CC^{2g+1}$ where the image vanishes. Indeed, we have
    \[
    \k_{j+1} = \frac{V_j - U_j^2}{2 U_j}, \quad\quad \k_1 =\frac{V_j +U_j^2}{2U_j}, \quad\quad j\in [g].
    \]
    The $\beta_j$ may be reconstructed recursively:
    \[
    \beta_j \,=\, \frac{1}{(\k_{j+1}-\k_1)^{2} \left(\alpha_{{I_k} 
\setminus \{ 1\} \cup \{ j+1\} }\right)} \qquad \text{and} \qquad \beta_{i} \,=\, (\k_n-\k_{i+1})^2 \beta_g \alpha_{I_k\setminus \{ i+1\} \cup \{ n\} }
    \]
    for $j=k,\dots, g,$ and $i=1,\dots,k.$ Hence, we can conclude that the map $\psi_{[\kk]}$ is birational. This implies that the closure of the image is irreducible and has dimension $2g+1$. Finally, note that the image of the map depends only on the equivalence class of the chosen Voronoi vertex. This follows from~\Cref{lem:equivalence_thetafunct} as theta functions of Delaunay polytopes arising from equivalent Voronoi vertices just differ by an exponential factor and therefore lead to the same KP solution, see~\eqref{eq:u_tau}.
\end{proof}

We conclude with an example.
\begin{example}[Genus $3$]
  Consider the banana graph of genus $3$. Let $\bar{\aa} = (\frac{1}{2},-\frac{1}{2},-\frac{1}{2})\in [\textcolor{mygreen}{{\bf 2}}]$ as in~\eqref{eq:a_minlex}. The corresponding Delaunay set is
    $${\cal D}_{\bar{\aa},Q}\,=\,\{(0,0,0)_{12},(0,-1,0)_{23},(0,0,-1)_{24},(1,-1,-1)_{34},(1,-1,0)_{13},(1,0,-1)_{14}\},$$
    where each vertex is indexed by the associated matroid basis of $\mathcal{M}_{{\bar{\aa}},v_1}$ as in~\eqref{eq:matroid_v1}. The degenerate theta function is 
    \begin{align*}
   \theta_{\mathcal{D}_{\bar{\aa},Q}}\,(\zz) \, = \, &\alpha_{000}+\aa_{0-10}\exp[-z_2]+\aa_{00-1}\exp[-z_3]+\aa_{1-1-1}\exp[z_1-z_2-z_3]+\\
   &\aa_{1-10}\exp[z_1-z_2]+\aa_{10-1}\exp[z_1-z_3].
    \end{align*}
    The limiting Riemann matrix $R$ is the symmetric matrix
    \begin{equation*}
    R\, = \, \begin{bmatrix}
        -2\log(\k_2-\k_1)^{2} & \log\displaystyle{\frac{(\k_3-\k_2)^2}{(\k_2-\k_1)^2\cdot (\k_3-\k_1)^2}} & \log\displaystyle{\frac{(\k_4-\k_2)^2}{(\k_4-\k_1)^2\cdot(\k_2-\k_1)^2}}\\
        * & -2\log(\k_3-\k_1)^{2} & \log\displaystyle{\frac{(\k_4-\k_3)^2}{(\k_4-\k_1)^2\cdot (\k_3-\k_1)^2}}\\
        * & * &-2\log(\k_4-\k_1)^{2}
    \end{bmatrix} 
    \end{equation*}
    From~\eqref{eq:image_v1}, the image of the vectors $(\UU,\VV,\WW)$ via the map $\psi_{[{\bf 2}]}$ in \eqref{eq:psi_Simo} is   
    \begin{equation*}
    \begin{matrix}
U_1=\k_1-\k_2, & V_1=\k_1^2-\k_2^2, & W_1 = \k_1^3-\k_2^3, \smallskip\\
U_2=\k_1-\k_3, & V_2=\k_1^2-\k_3^2, & W_2 =\k_1^3-\k_3^3, \smallskip\\
U_3=\k_1-\k_4, & V_3=\k_1^2-\k_4^2, & W_3 = \k_1^3-\k_4^3.
    \end{matrix}
    \end{equation*}
Recall that ${\bf s}_{\bar{\aa}}=(1,1,0,0)$. Evaluating $\theta_{\mathcal{D}_{\bar{\aa},Q}}(\UU x + \VV y + \WW t+{\bf D})$ at a point in the image of  $\psi_{[{\bf 2}]}$, we get 
\begin{align*}
\tau_{\mathcal{D}_{\bar{\aa},Q}}(x,y,t) =  \frac{1}{K_{12}E_1E_2} \biggl( 
& K_{12}E_1E_2 + \frac{K_{12}}{K_{13}^2\beta_2}E_2E_3 +\frac{K_{12}}{K_{14}^2\beta_3}E_2E_4 \\ &+\frac{K_{12}^3K_{34}^2\beta_1}{K_{13}^2K_{23}^2K_{14}^2K_{24}^2\beta_2\beta_3}E_3E_4
+\frac{K_{12}\beta_1}{K_{23}^2\beta_2}E_1E_3+ \frac{K_{12}\beta_1}{K_{24}^2\beta_3}E_1E_4\biggr),
\end{align*}
and the expression in the parentheses is precisely the $\tau$-function of the $(2,4)$-soliton associated with the matrix
\begin{equation*}
A \, = \, \begin{bmatrix}
    1 & 0 & \displaystyle{\frac{1}{\beta_2(\k_3-k_1)^2}}\frac{\k_1-\k_2}{\k_3-\k_2} & \displaystyle{\frac{1}{\beta_3(\k_4-k_1)^2}}\frac{\k_1-\k_2}{\k_4-\k_2}\smallskip\\
     0 & 1 & \displaystyle{\frac{\beta_1}{\beta_2(\k_3-k_2)^2}}\frac{\k_2-\k_1}{\k_3-\k_1} & \displaystyle\frac{1}{\beta_3(\k_4-k_2)^2}\frac{\k_2-\k_1}{\k_4-\k_1}\smallskip\\
\end{bmatrix} \in \Gr(2,4).
\end{equation*}
\end{example}

\subsection{Hirota parametrization from Krichever's method}

In this section, we provide an alternative parametrization of the variety ${\mathcal H}_{\Gamma,[{\bf k}],v_1}$ using the construction of KP soliton solutions given in~\cite{Abenda2017}. The family of KP soliton solutions given in~\cite{Abenda2017} follows from Krichever's method for spectral problems for degenerate finite-gap solutions such as multi-solitons. For more details, we refer to ~\cite{Abenda2017,krichever1986spectral,krichever2002toda}. Here, we introduce such a family of solitons and compare their $\tau$-function with that induced by the parametrization in~\eqref{eq:psi_Simo}. This new parametrization is also used in the next section to express the Hirota variety as the image of the KP divisors.

In \cite{Abenda2017}, it is proven that, for any given $k\in [g]$, the map $(\lambda_1,\dots, \lambda_g, \k_1,\dots, \k_n)$ to $\Gr(k,n)$, given by 
    \begin{equation}\label{eq:matrix_A_JGP}
        \tilde A \, = \, \begin{pmatrix}
            1 & 1 & \dots & 1\\
            \kappa_1 & \kappa_2 & \dots & \kappa_n \\
            \vdots & \vdots & \dots & \vdots\\
            \kappa_1^{k-1} & \kappa_2^{k-1} & \dots & \kappa_n^{k-1} \\
        \end{pmatrix}\, \cdot \,
     \text{diag}(1,\lambda_1,\dots,\lambda_{g}),
    \end{equation}
    characterizes the family of $(k,n)$-soliton solutions on $X$ with $\tau$-function $\tau_{\tilde A} (x,y,t)$ as in \eqref{eq:tauGrass}.
The following proposition shows that the matroid associated with 
$A$ in \eqref{eq:matrix_A} and with $\tilde A$ in \eqref{eq:matrix_A_JGP} are the same.  

\begin{proposition}\label{prop:matroid_tilde_A}
The matroid of the $k\times n$ matrix $\tilde A$ is ${\cal M}_{\aa, v_1}$, the matroid associated with each Voronoi vertex $\aa\in [{\bf k}]$ at the vertex $v_1$ of $\Gamma$.
\end{proposition}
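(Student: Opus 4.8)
The plan is to show that both matroids equal the uniform matroid $U_{k,n}$ on the ground set $[n]$, which forces them to coincide: on a fixed ground set the only matroid isomorphic to $U_{k,n}$ is $U_{k,n}$ itself.

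First I would check that $\tilde A$ has all maximal minors nonvanishing on the locus where the $\kappa_i$ are pairwise distinct and all $\lambda_i$ are nonzero, which is exactly the locus on which \eqref{eq:matrix_A_JGP} defines a point of $\Gr(k,n)$. Set $\lambda_0:=1$. Since $\tilde A$ is the $k\times n$ Vandermonde matrix $(\kappa_j^{\,i-1})_{i\in[k],\,j\in[n]}$ multiplied on the right by $\mathrm{diag}(1,\lambda_1,\dots,\lambda_g)$, right multiplication by a diagonal matrix scales each maximal minor by the product of the corresponding diagonal entries, so
\[
\tilde A_J \;=\; \Bigl(\prod_{j\in J}\lambda_{j-1}\Bigr)\,K_J
\qquad\text{for every } J\in\binom{[n]}{k},
\]
with $K_J=\prod_{\substack{i<j\\ i,j\in J}}(\kappa_j-\kappa_i)$ the Vandermonde minor from \eqref{eq:KJ_EJ}, which is nonzero because the $\kappa_i$ are distinct. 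Hence $\tilde A_J\neq 0$ for all $J$, so the matroid of $\tilde A\in\Gr(k,n)$ is $U_{k,n}$ on $[n]$.

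Next I would invoke the combinatorial side already established: by \Cref{thm:BTD} and the remark following the definitions \eqref{eq:matroid_v1}--\eqref{eq:matroid_v2}, for any Voronoi vertex $\aa\in[{\bf k}]$ the matroid $\mathcal{M}_{\aa,v_1}$ is isomorphic to $U_{k,n}$; being a matroid on the fixed ground set $[n]$, it therefore equals $U_{k,n}$. Comparing with the previous paragraph gives $\mathcal{M}_{\aa,v_1}=U_{k,n}=$ the matroid of $\tilde A$. Equivalently, one can argue directly from the previous subsection: \Cref{prop:matroid_A} identifies the matroid of the matrix $A$ from \eqref{eq:matrix_A} with $\mathcal{M}_{\bar\aa,v_1}$, while \Cref{prop:minor_iden} gives $A_J=\alpha_J\,K_{I_k}/K_J$, which is nonzero for every $J\in\binom{[n]}{k}$ when the $\kappa_i$ are distinct and the $\beta_i$ nonzero; thus $A$ and $\tilde A$ carry the same uniform matroid.

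There is no serious obstacle here; the two points requiring a little care are the bookkeeping of the column-scaling indices (the $\lambda$ attached to column $j$ is $\lambda_{j-1}$, with $\lambda_0=1$), and the restriction to the open dense locus $\prod_i\lambda_i\neq 0$ with the $\kappa_i$ pairwise distinct, which is precisely where $\tilde A$ represents a well-defined point of the Grassmannian carrying the claimed matroid; off that locus the statement is vacuous.
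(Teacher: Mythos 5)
Your proof is correct and matches the paper's approach: the paper's own proof is precisely ``a direct computation of the maximal minors of $\tilde A$,'' which is exactly your identity $\tilde A_J=\bigl(\prod_{j\in J}\lambda_{j-1}\bigr)K_J$ showing all minors are nonzero, combined with the already-established fact that $\mathcal{M}_{\aa,v_1}\cong U_{k,n}$ (and hence equals $U_{k,n}$ on the fixed ground set $[n]$). You have simply written out the details the paper leaves implicit.
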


\begin{proof}
The proof follows by a direct computation of the maximal minors of $\tilde{A}.$
\end{proof}
The next proposition provides the explicit relation between the parameters $\beta_1,\dots,\beta_g$ used in the parametrization of the Hirota variety in \Cref{theo:Hirota_parametrization} and the parameters $\lambda_1,\dots,\lambda_g$ which appear in \eqref{eq:matrix_A_JGP}.

\begin{proposition}\label{lem:beta_lambda}
Let $\k_1,\dots,\k_n$ be given. Then the matrices $A$ and $\tilde A$ as in \eqref{eq:matrix_A} and \eqref{eq:matrix_A_JGP} represent the same point in $\Gr(k,n)$ if and only if 
\begin{equation}\label{eq:beta_lambda}
\beta_j \, = \, \lambda_j^{-1} \exp \left[ \frac{k}{2} R_{jj} - \sum_{l=1}^{k-1} R_{jl} \right],
\quad\quad j\in [g].
\end{equation}
\end{proposition}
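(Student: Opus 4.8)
The plan is to compare $A$ and $\tilde A$ through their Pl\"ucker coordinates, reducing the statement to a scalar system that is then matched against \eqref{eq:entries_limitR}. By \Cref{prop:matroid_A} and \Cref{prop:matroid_tilde_A} both $A$ and $\tilde A$ realize the uniform matroid $U_{k,n}$, so all their maximal minors are nonzero and, after row reduction, both are supported on the pivot set $I_k=\{1,\dots,k\}$. Hence $A$ and $\tilde A$ represent the same point of $\Gr(k,n)$ if and only if their reduced row echelon forms coincide; since $A$ is already in reduced row echelon form, this amounts to the $k(n-k)$ scalar identities $A_{ij}=\bigl(\mathrm{RREF}(\tilde A)\bigr)_{ij}$, $i\in[k]$, $k<j\le n$. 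Applying Cauchy--Binet to the factorization \eqref{eq:matrix_A_JGP} gives $\tilde A_J=K_J\prod_{m\in J}\lambda_{m-1}$ with the convention $\lambda_0:=1$, whence (Lagrange interpolation on the first $k$ columns) $\bigl(\mathrm{RREF}(\tilde A)\bigr)_{ij}=\tfrac{\lambda_{j-1}}{\lambda_{i-1}}\prod_{l\in I_k,\,l\ne i}\tfrac{\kappa_j-\kappa_l}{\kappa_i-\kappa_l}$. Inserting the explicit entries of $A$ from \eqref{eq:matrix_A}, clearing common factors, and absorbing $(\kappa_j-\kappa_i)^2$ into the product (possible since $i\in I_k$), one finds that $[A]=[\tilde A]$ is equivalent to the system
\[
\frac{\beta_{i-1}\lambda_{i-1}}{\beta_{j-1}\lambda_{j-1}}\;=\;\frac{\prod_{l\in I_k}(\kappa_j-\kappa_l)^2}{\prod_{l\in I_k\setminus\{i\}}(\kappa_i-\kappa_l)^2}\qquad\text{for all }i\in[k],\ k<j\le n,
\]
with $\beta_0=\lambda_0=1$. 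Equivalently, one can run the argument through \eqref{eq:minor_identity}: as $A_{I_k}=\alpha_{I_k}=1$, proportionality of the two Pl\"ucker vectors says $\alpha_J=K_J^2\,K_{I_k}^{-2}\prod_{m\in J}\lambda_{m-1}\big/\prod_{l=1}^{k-1}\lambda_l$ for all $J\in\mathcal B_{\bar{\aa}}$, and since both points have full-support Pl\"ucker vectors it is enough to check this on the single-exchange bases, which recovers the same system.

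Next I would decouple this by setting $\mu_m:=\beta_m\lambda_m$ for $m=0,\dots,g$, so that $\mu_0=1$ and the system becomes $\mu_{i-1}\prod_{l\in I_k\setminus\{i\}}(\kappa_i-\kappa_l)^2=\mu_{j-1}\prod_{l\in I_k}(\kappa_j-\kappa_l)^2$ for all $i\le k<j$. Thus all these expressions must equal a single constant $C$, and $\mu_0=1$ pins down $C=\prod_{l=2}^{k}(\kappa_1-\kappa_l)^2$; conversely, any $\mu$ with $\mu_{i-1}=C/\!\prod_{l\in I_k\setminus\{i\}}(\kappa_i-\kappa_l)^2$ for $i\in I_k$ and $\mu_{j-1}=C/\!\prod_{l\in I_k}(\kappa_j-\kappa_l)^2$ for $j>k$ solves the system. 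Therefore $[A]=[\tilde A]$ is equivalent to $\mu_j=\beta_j\lambda_j$ being given, for every $1\le j\le g$, by $\prod_{l\in I_k,\,l\ne1}(\kappa_1-\kappa_l)^2$ divided by $\prod_{l\in I_k,\,l\ne j+1}(\kappa_{j+1}-\kappa_l)^2$ when $j\le k-1$, and by $\prod_{l\in I_k}(\kappa_{j+1}-\kappa_l)^2$ when $j\ge k$.

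It then remains only to verify that $\exp\!\bigl[\tfrac k2 R_{jj}-\sum_{l=1}^{k-1}R_{jl}\bigr]$ equals this value of $\mu_j$, which is what reduces \eqref{eq:beta_lambda} to a direct computation. Substituting the explicit constant terms $R_{jj}$ and $R_{jl}$ ($l\ne j$) recorded in \eqref{eq:entries_limitR}, the logarithms telescope: the factors $\kappa_{l+1}-\kappa_1$ assemble into the numerator $\prod_{l\in I_k,\,l\ne1}(\kappa_1-\kappa_l)^2$, the factors $\kappa_{j+1}-\kappa_{l+1}$ into the denominator, and the powers of $\kappa_{j+1}-\kappa_1$ coming from $\tfrac k2 R_{jj}$ and from each $R_{jl}$ add up to exactly $(\kappa_{j+1}-\kappa_1)^{-2}$, which supplies the $l=1$ factor missing from the denominator. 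I expect the only delicate point — and the main obstacle — to be the case split $j<k$ versus $j\ge k$: when $j\le k-1$ the index $l=j$ lies in the summation range $\{1,\dots,k-1\}$, so inside $\sum_{l=1}^{k-1}R_{jl}$ the term $R_{jj}$ occurs as a \emph{diagonal} entry, which effectively replaces the coefficient $\tfrac k2$ by $\tfrac{k-2}{2}$ and removes the $l=j$ factors from both products; this is precisely compensated by the fact that in that regime the target denominator also omits the $l=j+1$ factor. Carrying out the computation in both regimes yields $\mu_j=\exp\!\bigl[\tfrac k2 R_{jj}-\sum_{l=1}^{k-1}R_{jl}\bigr]$ and hence \eqref{eq:beta_lambda} in both directions. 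Apart from this, the argument is pure bookkeeping: signs in passing between maximal minors and reduced-row-echelon entries, and the conventions $\beta_0=\lambda_0=1$.
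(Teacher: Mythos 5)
Your proposal is correct, and its first half is essentially the paper's argument: both proofs reduce ``$[A]=[\tilde A]$'' to equality of the reduced row echelon forms, i.e.\ to the $k(n-k)$ scalar identities coming from the single-exchange bases $I_k\setminus\{i\}\cup\{j\}$, using exactly the formula $(\mathrm{RREF}(\tilde A))_{ij}=\tfrac{\lambda_{j-1}}{\lambda_{i-1}}\prod_{l\in I_k,\,l\ne i}\tfrac{\kappa_j-\kappa_l}{\kappa_i-\kappa_l}$. Where you diverge is the final verification. The paper never leaves ``$R$-coordinates'': it writes $A_{ij}$ via $\alpha_{I_k\setminus\{i\}\cup\{j\}}=\exp[\tfrac12(R_{i-1,i-1}+R_{j-1,j-1})-R_{i-1,j-1}]\,\beta_{i-1}/\beta_{j-1}$ and then invokes \Cref{lem:K(R)} to convert the Vandermonde ratio $K_{I_k}^2/K_J^2$ into an expression in the entries of $R$, after which \eqref{eq:beta_lambda} drops out by cancellation of $R$-terms. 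You instead substitute the explicit $\kappa$-formulas for $R$ from \eqref{eq:entries_limitR} and verify the resulting product identity by telescoping, with the correct case split $j\le k-1$ versus $j\ge k$ (your observation that the diagonal term hiding inside $\sum_{l=1}^{k-1}R_{jl}$ when $j\le k-1$ turns the coefficient $k/2$ into $(k-2)/2$, compensated by the missing $l=j+1$ factor in the denominator, is exactly the right bookkeeping, and your closed forms for $\mu_j=\beta_j\lambda_j$ agree with the Example following the proposition). Your route via $\mu_m=\beta_m\lambda_m$ and the single constant $C=\prod_{l=2}^k(\kappa_1-\kappa_l)^2$ is arguably a cleaner way to exhibit the ``if and only if,'' at the cost of being tied to the explicit hyperelliptic formulas for $R$, whereas the paper's Lemma~\ref{lem:K(R)} route only uses the algebraic relation between $K_J^2$ and $R$. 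One caveat: your computation is consistent with the off-diagonal constant term as it appears in the asymptotic display preceding \eqref{eq:entries_limitR} and as used in the proof of \Cref{lem:K(R)}, namely $R_{ij}=\log\bigl(\tfrac{\kappa_{i+1}-\kappa_{j+1}}{(\kappa_{i+1}-\kappa_1)(\kappa_{j+1}-\kappa_1)}\bigr)^2$; the formula printed in \eqref{eq:entries_limitR} carries an extra factor of $2$ which would break the telescoping, so you are (correctly) using the un-doubled version, and it is worth saying so explicitly.
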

    
\begin{proof}
    The statement follows imposing the equality between the entries of ${\tilde A}_{RREF}$, the reduced row echelon form  of the matrix $\tilde A$ in \eqref{eq:matrix_A_JGP} ($i \in [k]$, $j\in [n]$),
       \[
    ({\tilde A}_{RREF})_{i \, j} \, = \, (-1)^{k+i} \frac{{\tilde A}_{I_k\setminus \{i\} \cup \{j\}}}{A_I} \, =\, (-1)^{k+i}\frac{\lambda_{j-1}}{\lambda_{i-1}}  \frac{K_{I\setminus \{i\} \cup \{j\}}}{K_I},
    \]
    and the entries of the matrix $A$ in \eqref{eq:matrix_A}
    \begin{align*}
         A_{ij} \, &=  \frac{(-1)^{k+i} K_I}{K_{I\setminus \{i\} \cup \{j\}}}\alpha_{I\setminus \{i\}\cup \{j\}}\\ &=
  (-1)^{k+i}\frac{\beta_{i-1}}{\beta_{j-1}} \exp\left[ \frac{1}{2} (R_{i-1,i-1} + R_{j-1,j-1})-R_{i-1,j-1} \right]\frac{K_I}{K_{I\setminus \{i\} \cup \{j\}}},
    \end{align*}
where we use the notation $\lambda_0=1$, $\beta_0=1$, $R_{0,i}=0$, and $I\coloneqq I_k = \{1,\dots,k\}$.
Then \eqref{eq:beta_lambda} follows using the formula in \Cref{lem:K(R)} to express the minors of the Vandermonde submatrix as a function of the elements of the Riemann matrix $R$.
\end{proof}

\begin{lemma}\label{lem:K(R)}
    Let $K$ be the Vandermonde submatrix of size $k\times n$ with entries $K_{ij} = \k_j^{i-1}$, and $K_J$ denote the maximal minors with column indices $J=\{ 1\le j_1 < \cdots < j_k \le n \}$. Then
\begin{equation}\label{eq:squared_Vandermonde_minors}
    K_J^2 \,=\, \exp\left[ -\frac{k-1}{2} \sum_{l=1}^{k} R_{j_l-1,j_l-1} +\sum_{1\le l_1 < l_2 \le k} R_{j_{l_1}-1,j_{l_2}-1} \right].
\end{equation}
\end{lemma}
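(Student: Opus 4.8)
The plan is to reduce \eqref{eq:squared_Vandermonde_minors} to the classical Vandermonde factorization of $K_J$ together with a single ``building block'' identity rewriting each factor $\log(\kappa_p-\kappa_q)^2$ in terms of the entries of $R$. Recall that for the column set $J=\{j_1<\cdots<j_k\}$ the maximal minor is $K_J=\prod_{1\le a<b\le k}(\kappa_{j_b}-\kappa_{j_a})$, so that
\[
\log K_J^2 \;=\; \sum_{1\le a<b\le k}\log(\kappa_{j_a}-\kappa_{j_b})^2 .
\]
Hence it suffices to produce the correct additive expression for each summand and to add up the contributions.

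The key step is to establish the identity
\[
\log(\kappa_p-\kappa_q)^2 \;=\; R_{p-1,\,q-1}\;-\;\tfrac{1}{2}\,R_{p-1,\,p-1}\;-\;\tfrac{1}{2}\,R_{q-1,\,q-1},
\qquad p\neq q,\ \ p,q\in[n],
\]
with the convention $R_{0,i}=R_{i,0}=0$ already in force in the proof of \Cref{lem:beta_lambda}. I would verify this by inserting the entries of the limit Riemann matrix $R$ computed in \Cref{sec:Riemannmatrix} (see \eqref{eq:entries_limitR}). For $p,q\ge 2$ the off-diagonal entry $R_{p-1,q-1}$ contributes $\log(\kappa_p-\kappa_q)^2$ together with a correction depending only on $\kappa_p-\kappa_1$ and $\kappa_q-\kappa_1$, and this correction is cancelled precisely by the two half-diagonal terms, since $-\tfrac{1}{2}R_{p-1,p-1}=\log(\kappa_p-\kappa_1)^2$ and $-\tfrac{1}{2}R_{q-1,q-1}=\log(\kappa_q-\kappa_1)^2$. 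When one of the indices equals $1$, say $q=1$, the terms involving the index $0$ vanish and the identity collapses to the diagonal formula $-\tfrac{1}{2}R_{p-1,p-1}=\log(\kappa_p-\kappa_1)^2$. Thus the building-block identity holds uniformly over all unordered pairs of distinct columns.

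It then remains to sum this identity over the pairs $\{j_a,j_b\}$ with $a<b$. The off-diagonal terms $R_{j_a-1,j_b-1}$ occur once each and assemble into $\sum_{1\le l_1<l_2\le k}R_{j_{l_1}-1,j_{l_2}-1}$. Each half-diagonal term $-\tfrac{1}{2}R_{j_l-1,j_l-1}$ is produced once for every pair containing the column $j_l$, i.e.\ exactly $k-1$ times, so it accumulates with total coefficient $-\tfrac{k-1}{2}$; this yields $-\tfrac{k-1}{2}\sum_{l=1}^{k}R_{j_l-1,j_l-1}$. Adding the two contributions recovers the exponent in \eqref{eq:squared_Vandermonde_minors}, and exponentiating completes the proof. (The case $k=1$ is trivial: both sides equal $1$.)

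The computation is routine, and I do not expect a genuine obstacle. The two points that need a little care are: (i) the boundary case $j_1=1$, which is exactly why the statement is phrased with the shifted indices $j_l-1$ and the convention $R_{0,\cdot}=0$, for otherwise the building-block identity would fail to be uniform; and (ii) the elementary count that each column $j_l$ lies in exactly $k-1$ of the $\binom{k}{2}$ unordered pairs, which is what produces the prefactor $\tfrac{k-1}{2}$ on the diagonal sum. I would present the building-block identity as a displayed equation, dispatch the two cases in one line each, and then carry out the summation.
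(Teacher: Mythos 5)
Your proposal is correct and follows essentially the same route as the paper: the paper's proof establishes exactly your building-block identity in the form $K_{\{i+1,j+1\}}^2=\exp[-\tfrac12(R_{ii}+R_{jj})+R_{ij}]$ (and its degenerate case $K_{\{1,j+1\}}^2=\exp[-\tfrac12 R_{jj}]$), then multiplies over all pairs via $K_J^2=\prod_{l_1<l_2}K_{j_{l_1}j_{l_2}}^2$, which is your logarithmic summation. Your write-up merely makes the $(k-1)$-fold counting of the diagonal terms explicit, which the paper leaves implicit.
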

\begin{proof}
     Note that for $I=\{1,j+1\}$ and $\tilde{I} =\{i+1,j+1\}$ we get 
    \[
    K_I^2 \,=\, (\k_{j+1} - \k_1)^2 \,=\, \exp\left[ -\frac{1}{2} R_{jj}\right], \quad K_{\tilde{I}}^2 \,=\, (\k_{j+1} - \k_{i+1})^2 \,=\, \exp\left[ -\frac{1}{2} (R_{jj}+ R_{ii})+ R_{ij}\right]. 
    \]
    Then the proof follows by observing that $K_J^2$ can be written as a product of squares of $2\times 2$ minors of type $I,\tilde{I}$, i.e., $K_J^2 = \prod_{1\le l_1 < l_2 \le k} K_{j_{l_1}j_{l_2}}^2$.
\end{proof}
\begin{example}
    If one uses the explicit formulas for the entries of the matrix $R$ computed in \Cref{sec:Riemannmatrix}, it is easy to verify that \eqref{eq:beta_lambda} explicitly reads
    \[
     \beta_j = \left\{ \begin{array}{ll}
\displaystyle \lambda_j^{-1} \prod_{l\not =1,j+1}^k \frac{(\k_l-\k_1)^2}{(\k_l -\k_{j+1})^2} , & \quad \text{ if } \,  1\le j\le k-1,\\
\displaystyle \lambda_j^{-1} (\k_{j+1} - \k_1)^{-2} \prod_{l=2}^k\frac{(\k_l-\k_1)^2}{(\k_l-\k_{j+1})^2}, & \quad \text{ if }\,  k\le j\le g.
    \end{array}
    \right.       
    \]

\end{example}
Therefore, using \eqref{eq:beta_lambda}, we have proven that the family of KP solitons arising from Krichever's method induce an alternative parametrization of the Hirota variety in terms of $(\underline{\lambda},\underline{\kappa})$ which is equivalent to that given in~\eqref{eq:psi_Simo}. This is summarized in the following.

\begin{theorem}\label{theo:Krichever1_Hirota}
   The variety of ${\cal H}_{\Gamma,[\kk],v_1}$ admits a parametrization
  \begin{align}\label{eq:psi_lambda}
    {\tilde \psi}_{[\kk],v_1}\, : \, \CC^{2g+1} &\dashrightarrow \KWPB\\
(\underline{\lambda},\underline{\kappa}) &\longmapsto ((\alpha_J)_{J\in {\cal B}_{\aa},v_1},(\UU,\VV,\WW)),\nonumber
\end{align}
whose image is given by
\begin{equation}\label{eq:image_lambda}
\displaystyle
    \alpha_{J} \,=\, \frac{{\tilde A}_J \, K_J}{{\tilde A}_{I_k} \, K_{I_k}} \,=\, \frac{K_J^2}{K_{I_k}^2} \prod_{l=1}^g\lambda_l^{(B^T\cc_J)_{l+1}},
\quad\quad
 \begin{bmatrix}
        \UU \,\, \VV \,\, \WW 
    \end{bmatrix} = B \, \cdot K,
\end{equation}
which is equivalent to the parametrization presented in Theorem \ref{theo:Hirota_parametrization}.
\end{theorem}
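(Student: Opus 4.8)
The plan is to deduce the statement from the results already in hand, viewing the two parametrizations as related by an explicit birational change of coordinates. First I would recall that, by \cite{Abenda2017}, the matrix $\tilde A$ of \eqref{eq:matrix_A_JGP} parametrizes the family of $(k,n)$-soliton solutions on the nodal curve $X$, and that by \Cref{prop:matroid_tilde_A} its matroid is exactly $\mathcal{M}_{\aa,v_1}$. Hence the Wronskian $\tau$-function $\tau_{\tilde A}(x,y,t)$ from \eqref{eq:tauGrass} is supported precisely on the bases $J\in\mathcal{B}_{\aa,v_1}$, i.e.\ on the Delaunay vertices $\cc_J\in\DaQ$. Dividing $\tau_{\tilde A}$ by the monomial $\tilde A_{I_k}K_{I_k}E_{I_k}(x,y,t)$ and invoking the identity $E_J(x,y,t)/E_{I_k}(x,y,t)=\exp[\cc_J^T(\UU x+\VV y+\WW t)]$ established inside the proof of \Cref{theo:Hirota_parametrization}, one reads off that the normalized $\tau$-function agrees with $\tau_{\mathcal D}$ with coefficients $\alpha_J=\tilde A_J K_J/(\tilde A_{I_k}K_{I_k})$. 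Since $\tilde A$ is a Vandermonde matrix times $\mathrm{diag}(1,\lambda_1,\dots,\lambda_g)$, its maximal minors factor as $\tilde A_J=K_J\prod_{j\in J}\lambda_{j-1}$ with $\lambda_0=1$; because $\mathbf{s}_{\bar{\aa}}$ is the indicator vector of $I_k$ and $B^T\cc_J+\mathbf{s}_{\bar{\aa}}$ the indicator of $J$, the bookkeeping $\prod_{j\in J}\lambda_{j-1}\big/\prod_{j\in I_k}\lambda_{j-1}=\prod_{l=1}^g\lambda_l^{(B^T\cc_J)_{l+1}}$ gives exactly the formula for $\alpha_J$ in \eqref{eq:image_lambda}. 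The $(\UU,\VV,\WW)$-part is literally the same as in \eqref{eq:image_v1}, since it does not involve $\underline\lambda$ or $\underline\beta$; in particular it is a function of $\underline\kappa$ alone by \Cref{lem:UVW}.

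For the equivalence with the parametrization of \Cref{theo:Hirota_parametrization}, I would use \Cref{lem:beta_lambda}: the assignment $\Theta\colon(\underline\lambda,\underline\kappa)\mapsto(\underline\beta,\underline\kappa)$ with $\beta_j=\lambda_j^{-1}\exp[\tfrac k2 R_{jj}-\sum_{l=1}^{k-1}R_{jl}]$ makes $A$ (of \eqref{eq:matrix_A}) and $\tilde A$ represent the same point of $\Gr(k,n)$. Since $\psi_{[\kk],v_1}$ factors through this Grassmannian point together with the data $\underline\kappa$ — the $\alpha_J$ being the minors of $A$ times the common Vandermonde factor $K_{I_k}/K_J$ by \Cref{prop:minor_iden}, and the vectors $\UU,\VV,\WW$ depending only on $\underline\kappa$ — we obtain $\tilde\psi_{[\kk],v_1}=\psi_{[\kk],v_1}\circ\Theta$. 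As each $R_{ij}$ is an explicit rational expression in $\underline\kappa$ (computed in \Cref{sec:Riemannmatrix}), $\Theta$ is a birational automorphism of $\CC^{2g+1}$, with inverse $\lambda_j=\beta_j^{-1}\exp[\tfrac k2 R_{jj}-\sum_{l=1}^{k-1}R_{jl}]$; hence $\tilde\psi_{[\kk],v_1}$ has the same image $\mathcal{H}_{\Gamma,[\kk],v_1}$ as $\psi_{[\kk],v_1}$ and is likewise birational onto it. The statement for $v_2$ follows as in \Cref{theo:Hirota_parametrization} by matroid duality and space--time inversion.

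Alternatively, and more self-contained, one can bypass $\Theta$ and verify \eqref{eq:image_lambda} directly against \eqref{eq:image_v1}: rewrite $\exp[\tfrac12\cc_J^TR\cc_J]$ via \Cref{lem:K(R)} as $K_J^2/K_{I_k}^2$ up to the monomial correction in the $R_{jj},R_{jl}$ that is exactly absorbed by \eqref{eq:beta_lambda}, and combine with $\exp[\cc_J^T\mathbf D]=\prod_l\beta_l^{-(B^T\cc_J)_{l+1}}$ to recover $\alpha_J=\tfrac{K_J^2}{K_{I_k}^2}\prod_l\lambda_l^{(B^T\cc_J)_{l+1}}$. In either route the only genuinely delicate point is the sign/exponent bookkeeping relating $\cc_J$, $B^T\cc_J$, the shift $\mathbf{s}_{\bar{\aa}}$, and the indicator of $J$; I expect no serious obstacle beyond making this precise and confirming that the $\beta\leftrightarrow\lambda$ dictionary (equivalently $\Theta$) is invertible on the dense open locus where the $\k_i$ are distinct and the relevant minors are nonzero. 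Everything else is a formal consequence of \Cref{prop:matroid_tilde_A}, \Cref{prop:minor_iden}, \Cref{lem:beta_lambda}, and \Cref{lem:K(R)}.
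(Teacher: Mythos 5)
Your proposal is correct and follows essentially the same route as the paper: the paper derives this theorem precisely by combining \Cref{prop:matroid_tilde_A}, the $\beta\leftrightarrow\lambda$ dictionary of \Cref{lem:beta_lambda} (which makes $A$ and $\tilde A$ the same point of $\Gr(k,n)$), and \Cref{lem:K(R)}, so that $\tilde\psi_{[\kk],v_1}=\psi_{[\kk],v_1}\circ\Theta$ for the birational change of coordinates $\Theta$. Your exponent bookkeeping $\tilde A_J=K_J\prod_{j\in J}\lambda_{j-1}$ and $\prod_{j\in J}\lambda_{j-1}/\prod_{j\in I_k}\lambda_{j-1}=\prod_{l=1}^g\lambda_l^{(B^T\cc_J)_{l+1}}$ is exactly the computation underlying \eqref{eq:image_lambda}.
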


\subsection{Hirota parametrization in function of the KP divisor}\label{sec:Hirota_divisors}

From \Cref{sec:theta_functions_limit}, we know that, given the triple $(\X,p_0,\zeta)$, where $\X$ is a smooth complex algebraic curve of genus $g$ and $p_0$ is the essential singularity of the Baker--Akhiezer function, the complex smooth KP solutions are parametrized by non-special divisors  $D = p_1+\cdots+p_g-p_0$ on $\X$. 

When the family of hyperelliptic curves described in~\Cref{sec:Riemannmatrix} degenerates to $X=X_+\cup X_-$, as before, we identify the component $X_+$ with the vertex $v_1$ and $X_-$ with $v_2$. The aim of this section is to describe the Hirota variety ${\cal H}_{\Gamma,[\kk]}$ in function of the distribution of the divisor points. Since in the tropical limit, $p_0$ may end up either on $X_-$ or on $X_+$, it is convenient to work with the smooth divisors of the form $D = p_1+\dots +p_g-p_0$.

Given a divisor $D = p_1 + p_2 + \dots + p_g -p_0,\in \text{Div}(X)$, we denote by $D_+$ and $D_-$ its restrictions to $X_+,X_-$, respectively. In this section, we prove that, for any given $k\in[g]$, ${\cal H}_{\Gamma,[\kk],v_1}$ is parametrized by  divisors such that $D_+ = p_1+\dots+p_k-p_0$ and $D_- = p_{k+1}+\dots+p_g$ of degree $(g-1)$, see \Cref{the:paramHirotagenericD}. Then, by the duality argument settled in \Cref{theo:Hirota_parametrization}, it follows that ${\cal H}_{\Gamma,[\kk],v_2}$ is parametrized by the divisors such that $D_+ = p_{n-k+1}+\dots+p_g$ and $D_- = p_{1}+\dots+p_{n-k}-p_0$ of degree $(g-1)$. By \eqref{eq:Hirota_KK}, we have that 
${\cal H}_{\Gamma,[\kk]}\; = \; {\cal H}_{\Gamma,[\kk],v_1}\, \cup \, {\cal H}_{\Gamma,[\kk],v_2}$, it follows that
${\cal H}_{\Gamma,[\kk]} $ is parametrized by degree-$(g-1)$ divisors $D$ such that
\[(\deg D_+ , \deg D_- ) \, = \, (k-1,g-k) .\]

This subdivision also matches the two possible ways of distributing the divisor $D$ in function of the strongly connected orientations, since $[\kk]$ also fixes the equivalence class of strongly connected orientations of $\Gamma$ with exactly $k$ outgoing edges at $v_1$ and $n-k$ outgoing at $v_2$. Therefore, once the strongly connected orientation of the banana graph $\Gamma$ is fixed, the degrees of the divisors $D_+$ and $D_-$ are uniquely determined by the number of outgoing edges at the vertices $v_1$ and $v_2$, respectively. 

In other words, the vertices of the Voronoi polytope---equivalently, the strongly connected orientations of the banana graph---parametrize, up to equivalence, the different ways of distributing the KP divisor points of degree $(g-1)$ on the two components of the curve $X$. This observation motivates the construction of the variety of KP soliton solutions arising from a banana graph for all possible choices of such divisors and provides a new insight into ${\cal H}_\Gamma$, the \textit{Hirota variety of a banana graph}.

Note that inspecting the $\tau$-function in \eqref{eq:tau_1} is equivalent to expressing the vector ${\bf D} = (\log \beta_1,\dots,\log \beta_g)$ in function of ${\cal A} (p_1,\cdots,p_g)$, the image of the divisor $D$ via the Abel map, that is, $  {\bf D} =-{\cal A} (p_1,\cdots,p_g) - \mathbf{K}$,
where $\mathbf{K}=(\mathbf{K}_i)$ is the vector of the Riemann constants.

We now recall the construction of KP multi-solitons in  \cite{Abenda2017, nakdegeneration} associated with the reducible nodal curve $X = X_+ \cup X_-$ with the defining equation given in \eqref{eq:hyperelliptic_curve}. 
This family of curves and their KP solutions have been obtained using Krichever's approach for degenerate finite-gap solutions in  \cite{Abenda2017} and the Sato Grassmannian in
\cite{nakdegeneration}. These constructions provide a representation of KP solitons equivalent to that obtained in \Cref{sec:parametrization} using a purely tropical approach.

\begin{remark}
Assuming that the divisor $D$ represents a real or complex KP multi-soliton solution for soliton data in $\Gr(k,n)$ and essential singularity $p_0\in X_+$, gives a differential \emph{Darboux operator} of degree $k$. Its characteristic equation at $(x,y,t)=(0,0,0)$ has $k$ roots (counted with multiplicities), which provide the affine coordinates of the KP divisor $D_+$ on $X_+$ in the Krichever normalization of the KP wave function. In Krichever's approach to degenerate finite-gap solutions \cite{Abenda2017, Krichdeg}, the divisor $D_-=p_{k+1}+\cdots +p_g$ is the restriction of the KP divisor to $X_-$ precisely when it is the pole divisor of a meromorphic function on $X_-$ whose values at the nodes agree with those of the KP wave function on $X_+$ for all $(x,y,t)$.

Finally, the Darboux operator coincides with the \emph{Sato dressing operator}. This equivalence implies that the same soliton solutions can be described by points in the Sato Grassmannian. For convenience, Nakayashiki in \cite{nakimrn} constructs these points via the adjoint KP wave function; that is, they correspond to the KP soliton 
associated with the divisor $D = D_++D_-$, up to duality transformation between $\Gr(k,n)$ and $\Gr(n-k,n)$, which implements the space-time transformation $(x,y,t)\mapsto (-x,-y,-t)$.
\end{remark}

We adopt the notation used in \cite[Section~6]{AgoFevManStu}. More precisely, let $Z=X_+\cap X_- =\{q_1,q_2,\dots,q_n\}$ and consider the normalization $\nu:\PP^1\to X_+$ where $\kappa_i\coloneqq\nu^{-1}(q_i)$, for $i=1,\dots,n$. Then define
\begin{equation}\label{eq:polynomialK}
    K(z)\coloneqq \prod_{j=1}^n(z-\kappa_j),
\end{equation}
with $K'(z)$ denoting the derivative of the polynomial $K(z)$, where $z$ is the affine coordinate in $\PP^1$ such that $p_0$ is the infinite point. Let $D=p_1+\cdots+p_g-p_0 \in \text{Div}(X)$, and $D_+$, $D_-$ their restrictions, respectively, to $X_+$, $X_-$. Assume that the first $k$ points in $D$ are supported on the component $X_+$ to which $p_0$ belongs, and define the polynomial $P(z), Q_l(z)$ as in \eqref{eq:polynomialsPandQ},  where $\langle Q_1,Q_2, \dots,Q_{n-k}\rangle$ is a basis of the space $H^0(X_-,D_-)$. Therefore, we have $n - k = \deg(D_-)+1-p_a(X_1) = g-k +1$. Furthermore, as $\deg(D_--Z) = g-k - n = -k -1 < 0 $, then $H^0(X_-,D_--Z)=0$. 

The parameters $\lambda_j$ appearing in $\tilde A$, as defined in~\eqref{eq:matrix_A_JGP}, were expressed in terms of the KP divisor $D=p_1+\cdots+p_g-p_0$ in \cite{Abenda2017}. 
In that work, the primary goal was to characterize the properties of the KP divisor when the corresponding KP multi-soliton  is real and regular. The main result, \cite[Theorem~9.1]{Abenda2017}, provides such a characterization for divisors 
supported on smooth (Equation~(53)) and non-smooth (Equation~(54)) points and such that they satisfy the Dubrovin--Natanzon condition~\cite{DubNat}. 
In the case of smooth divisors, \cite[Theorem~9.1]{Abenda2017} can be adapted in a straightforward way to the more general case of the complex KP multi-solitons, as follows.

\begin{theorem}\label{theo:lambda_D}
Let $\k_1,\dots,\k_n$ be fixed. For any smooth divisor $D=p_1+\cdots+p_g-p_0$ on $X$ whose restriction to $X_+$ is $D_+=p_1+\cdots+p_k-p_0$, there exists an associated  matrix $\tilde A$ as in~\eqref{eq:matrix_A_JGP} such that the corresponding $g$-tuple $(\lambda_1,\dots,\lambda_g)$ satisfies
\begin{equation}\label{eq:lambda_div}
    \lambda_j \,=\, \frac{\displaystyle P(\kappa_1) K' (\kappa_1)\prod_{l=1}^{n-k} Q_l(\kappa_1)}{\displaystyle P(\kappa_{j+1})K' (\kappa_{j+1})\prod_{l=1}^{n-k} Q_l(\kappa_{j+1}) } , \quad\quad j\in [g],
\end{equation}
with $P(z),Q[z]$ as in~\eqref{eq:polynomialsPandQ}, and $K'(z)$ the derivative of $K(z)$ in~\eqref{eq:polynomialK}.
\end{theorem}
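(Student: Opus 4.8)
The plan is to carry out, without the reality and regularity hypotheses, the construction of KP solitons from the reducible curve $X = X_+ \cup X_-$ used in \cite[Theorem~9.1]{Abenda2017}: in \emph{loc.\,cit.} those hypotheses serve only to single out the divisors producing real regular solitons, not to derive the formula for the $\lambda_j$. Concretely, I would start from the Baker--Akhiezer function $\psi$ on $X$ attached to the smooth divisor $D = p_1 + \cdots + p_g - p_0$, with essential singularity at $p_0 \in X_+$ and normalized so that $\psi(0,0,0;\cdot) \equiv 1$. Its existence, and the fact that the resulting KP solution is the multi-soliton $\tau$-function $\tau_{\tilde A}$ for a well-defined point $\tilde A \in \Gr(k,n)$, are exactly what \cite{Abenda2017, nakdegeneration} provide (see the Remark above, based on \cite{Krich1977, Krichdeg}). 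On $X_+ \cong \PP^1$ with affine coordinate $z$ and $p_0 = \infty$, one has $\psi|_{X_+}(x,y,t;z) = \tfrac{\mathcal W(x,y,t;z)}{P(z)}\, e^{zx+z^2y+z^3t}$ with $\mathcal W(x,y,t;\cdot)$ monic of degree $k$ and $\mathcal W(0,0,0;\cdot) = P$; on $X_- \cong \PP^1$, the restriction $\psi|_{X_-}(x,y,t;\cdot) = \sum_{l=1}^{n-k} c_l(x,y,t)\, Q_l(\cdot)$ lies in $H^0(X_-, D_-) = \langle Q_1, \dots, Q_{n-k}\rangle$, with $P, Q_l$ as in \eqref{eq:polynomialsPandQ}. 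Continuity of $\psi$ at the $n$ nodes, whose coordinate on each $\PP^1$-component is $\kappa_i$, yields the gluing relations $\tfrac{\mathcal W(x,y,t;\kappa_i)}{P(\kappa_i)}\, E_i(x,y,t) = \sum_{l=1}^{n-k} c_l(x,y,t)\, Q_l(\kappa_i)$ for $i \in [n]$, with $E_i$ the exponentials of \eqref{eq:f_functions}.

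The first step is to identify the row span of $\tilde A$ intrinsically. Let $W \subset \RR^n$ be the image of $H^0(X_-, D_-)$ under evaluation at the nodes, $Q \mapsto (Q(\kappa_1), \dots, Q(\kappa_n))$; this map is injective since a nonzero element of $H^0(X_-, D_-)$ has only $\deg D_- = n-k-1 < n$ zeros, so $\dim W = n-k$. The order-$k$ Darboux (dressing) operator $\mathcal L$ with $\mathcal L\, e^{zx} = \mathcal W(x,y,t;z)\, e^{zx}$ annihilates each $f^{(m)} = \sum_j \tilde A_{mj} E_j$, so $\sum_j \tilde A_{mj}\,\mathcal W(x,y,t;\kappa_j)\, E_j = 0$ for all $m$; equivalently the rows of $\tilde A$ annihilate the vector $\bigl(\mathcal W(\kappa_j) E_j\bigr)_j = \text{diag}(P(\kappa_j)) \cdot \bigl(P(\kappa_j)^{-1}\mathcal W(\kappa_j) E_j\bigr)_j$. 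By the gluing relations the second factor lies in $W$ and, for a smooth divisor, these vectors span $W$ as $(x,y,t)$ varies; since $\tilde A$ has rank $k$, a dimension count then shows that its row span equals $\bigl(\text{diag}(P(\kappa_j))\, W\bigr)^\perp = \text{diag}(P(\kappa_j)^{-1})\, W^\perp$.

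The second step is a partial-fraction computation of $W^\perp$. A vector $v \in \RR^n$ lies in $W^\perp$ iff $\sum_i v_i = 0$ and $\sum_i v_i/(\kappa_i - p_m) = 0$ for each pole $p_m$ of $D_-$; writing $\hat N(w) = \sum_i v_i \prod_{j \neq i}(w - \kappa_j)$ so that $\hat N(\kappa_i) = v_i\, K'(\kappa_i)$ with $K$ as in \eqref{eq:polynomialK}, these conditions say precisely that $\deg \hat N \le n-2$ and $\hat N$ vanishes on $D_-$, i.e. $\hat N(w) = \bigl(\prod_{l=1}^{n-k} Q_l(w)\bigr)^{-1} M(w)$ with $M$ an arbitrary polynomial of degree $\le k-1$. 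Hence $v_i = M(\kappa_i)\big/\bigl(K'(\kappa_i) \prod_l Q_l(\kappa_i)\bigr)$, and taking $M = 1, w, \dots, w^{k-1}$ gives a basis of $W^\perp$. Transporting it through $\text{diag}(P(\kappa_i)^{-1})$ produces a representative of $\tilde A \in \Gr(k,n)$ with entries $\tilde A_{mi} = \kappa_i^{m-1} \big/ \bigl(P(\kappa_i)\, K'(\kappa_i) \prod_{l=1}^{n-k} Q_l(\kappa_i)\bigr)$ for $m \in [k]$, $i \in [n]$, which is precisely the Vandermonde-times-diagonal form \eqref{eq:matrix_A_JGP} with $\text{diag}(1, \lambda_1, \dots, \lambda_g)$ proportional to $\text{diag}\bigl(P(\kappa_i)\, K'(\kappa_i) \prod_l Q_l(\kappa_i)\bigr)^{-1}_i$. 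Normalizing the first diagonal entry to $1$ and reading off the $(j+1)$-st entry yields \eqref{eq:lambda_div}.

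The main obstacle is not the computation above, which is elementary, but making precise the inputs imported from \cite{Abenda2017, nakdegeneration, Krichdeg}: that $\psi$ exists and is unique on the reducible curve, that its associated KP solution is the soliton $\tau_{\tilde A}$, and that for a smooth divisor the gluing vectors span all of $W$ (equivalently $\mathcal W(x,y,t;\cdot)$ is non-degenerate, which can fail only on the non-generic divisors excluded by hypothesis). A secondary, bookkeeping-level difficulty is keeping the normalizations aligned — which column of \eqref{eq:matrix_A_JGP} carries the multiplier $1$, and the choice of $\kappa_1$ as the distinguished node on the $p_0$-component — since the formula \eqref{eq:lambda_div} is sensitive to these conventions.
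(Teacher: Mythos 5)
Your proposal is correct, and it supplies considerably more detail than the paper itself, whose entire argument for Theorem~\ref{theo:lambda_D} is the preceding sentence asserting that \cite[Theorem~9.1]{Abenda2017} ``can be adapted in a straightforward way'' once reality and regularity are dropped. Your reconstruction via the Baker--Akhiezer function and the gluing relations at the nodes is exactly the intended adaptation (compare the Remark in Section~5.3, which phrases the condition on $D_-$ as the existence of a function in $H^0(X_-,D_-)$ matching the wave function at the nodes), and both computational steps check out: the identification of the row span of $\tilde A$ with $\mathrm{diag}\bigl(P(\kappa_i)^{-1}\bigr)W^\perp$, and the Lagrange-interpolation description of $W^\perp$, which gives $\deg M\le (n-2)-(n-k-1)=k-1$ and hence a representative of the form \eqref{eq:matrix_A_JGP} with $i$-th column multiplier $1/\bigl(P(\kappa_i)K'(\kappa_i)\prod_{l}Q_l(\kappa_i)\bigr)$; normalizing the first multiplier to $1$ reproduces \eqref{eq:lambda_div} with the correct indexing $\lambda_j\leftrightarrow\kappa_{j+1}$. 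One remark: the spanning condition you flag as an import is in fact automatic. A relation $\sum_j v_j\,\mathcal W(x,y,t;\kappa_j)E_j(x,y,t)\equiv 0$ says $\mathcal L\bigl(\sum_j v_jE_j\bigr)\equiv 0$, and since the kernel of the order-$k$ operator $\mathcal L$ meets $\mathrm{span}\{E_j\}$ exactly in $\mathrm{span}\{f^{(1)},\dots,f^{(k)}\}$, any such $v$ already lies in the row span of $\tilde A$; hence the gluing vectors span a space of dimension exactly $n-k=\dim\bigl(\mathrm{diag}(P(\kappa_j))W\bigr)$ and both inclusions in your dimension count are equalities. The only genuine external inputs are the existence and uniqueness of $\psi$ on the reducible curve and the identification of its KP solution with $\tau_{\tilde A}$, which the paper likewise takes from \cite{Abenda2017,nakdegeneration}; the smoothness hypothesis on $D$ is what guarantees $P(\kappa_i)\neq 0$ and $Q_l(\kappa_i)\neq 0,\infty$, so all multipliers are finite and nonzero.
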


Let us now recall the characterization of the same family of soliton solutions in the Sato Grassmannian. 
Algorithm 27 in \cite{AgoFevManStu} provides a recipe for constructing the KP soliton corresponding to the Sato Grassmannian point $U = \iota(H^0(X,D+\infty p))$. The output is the $(n-k)\times n$ matrix 
\[{A}^\vee \, = \, \begin{bmatrix}
   \displaystyle  \frac{P(\kappa_1)}{K'(\kappa_1)} & \displaystyle\frac{P(\kappa_2)}{K'(\kappa_2)}& \dots & \displaystyle\frac{P(\kappa_n)}{K'(\kappa_n)}\\
   \displaystyle \frac{P(\kappa_1)Q_1(\kappa_1)}{K'(\kappa_1)} & \displaystyle\frac{P(\kappa_2)Q_1(\kappa_2)}{K'(\kappa_2)}& \dots & \displaystyle\frac{P(\kappa_n)Q_1(\kappa_n)}{K'(\kappa_n)}\\
    \vdots & \vdots & \dots & \vdots\\
 \displaystyle\frac{P(\kappa_1)Q_{n-k}(\kappa_1)}{K'(\kappa_1)} & \displaystyle\frac{P(\kappa_2)Q_{n-k}(\kappa_2)}{K'(\kappa_2)}& \dots &\displaystyle \frac{P(\kappa_n)Q_{n-k}(\k_n)}{K'(\kappa_n)}
\end{bmatrix}.\]
Note that this coincides with the transpose of the matrix appearing in~\cite[Theorem~7.4]{nakdegeneration} for $m_0=g$ under the following correspondence of notation: the quantities $k, \k_i, d_i, \Lambda_i, D_i$ used in \cite{nakdegeneration} correspond, respectively, to $g-k, \lambda_i, p_i, K'(\kappa_i), P(\k_i)$ in our notation. Moreover, the coefficients $C_{ij}$ arise from a different choice of basis for $H^0(X_-,D_1)$. 

Since the algorithm relies on the choice of coordinates for the Sato Grassmannian originally introduced in \cite{nakimrn}, the KP wave function associated with the KP multi-soliton represented by $A^\vee$ is the adjoint of the KP wave function corresponding to the divisor $D$.

\begin{theorem}
    Let $\k_1,\dots,\k_n$ and $D=D_+ + D_-$ be as above. Then ${A}^\vee$ is an $(n-k)\times n$ matrix representing the KP multi-soliton obtained from the one associated with the $k\times n$ matrix $A$ in \eqref{eq:matrix_A} under space--time inversion $(x,y,t) \mapsto (-x,-y,-t)$. More precisely, if $$u_{A} (x,y,t) \,=\, 2\partial^2_x \tau_{A}(x,y,t) \qquad \text{and} \qquad u_{A^\vee} (x,y,t) \,=\, 2\partial^2_x \tau_{A^\vee}(x,y,t)$$ are the KP solutions represented by the matrices $A$ and $A^\vee$, respectively, then
\[
u_{A} (x,y,t) \,=\, u_{A^\vee} (-x,-y,-t).
\]
\end{theorem}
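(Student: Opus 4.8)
The plan is to reduce the assertion to an identity between $\tau$-functions and then to prove that identity from the explicit Plücker coordinates of $A$ and $A^\vee$. Since $u_A=2\partial_x^2\log\tau_A$ and $u_{A^\vee}=2\partial_x^2\log\tau_{A^\vee}$, it suffices to show that $\tau_{A^\vee}(-x,-y,-t)$ equals $\tau_A(x,y,t)$ up to a nonzero scalar and a factor $\exp[\ell(x,y,t)]$ with $\ell$ linear in $x,y,t$: both are annihilated by $2\partial_x^2\log(\cdot)$, and the two occurrences of $x\mapsto -x$ compose to the identity under the chain rule. Conceptually this is exactly the statement, already flagged in the Remark above, that the KP wave function produced by the adjoint construction of~\cite{nakimrn,nakdegeneration} is the space--time reversal of the one attached to the divisor $D$; the computation below realizes this at the level of $\tau$-functions.

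First I would record the Plücker vector of each matrix. By \Cref{lem:beta_lambda} together with \Cref{theo:lambda_D}, the matrix $A$ of~\eqref{eq:matrix_A} represents the same point of $\Gr(k,n)$ as $\tilde A$ of~\eqref{eq:matrix_A_JGP} with the $\lambda_j$ of~\eqref{eq:lambda_div}, so $A_J$ is proportional to $K_J\prod_{j\in J}\lambda_{j-1}$. Setting $P^-(z)\coloneqq\prod_{m=k+1}^{g}(z-p_m)$, so that $\prod_{l=1}^{n-k}Q_l(z)=1/P^-(z)$, equation~\eqref{eq:lambda_div} gives $\lambda_{j-1}\propto v_j$ with $v_j\coloneqq\frac{P^-(\k_j)}{P(\k_j)K'(\k_j)}$, hence $\tau_A(x,y,t)\propto\sum_{J\in\binom{[n]}{k}}K_J^{\,2}\bigl(\prod_{j\in J}v_j\bigr)E_J(x,y,t)$. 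For $A^\vee$ I would pull the factor $P(\k_j)/K'(\k_j)$ out of the $j$-th column, so that the maximal minor indexed by $\bar J\in\binom{[n]}{n-k}$ equals $\bigl(\prod_{j\in\bar J}\tfrac{P(\k_j)}{K'(\k_j)}\bigr)$ times $\det\bigl(Q_l(\k_j)\bigr)_{l\in[n-k],\,j\in\bar J}$ (the displayed first row is the $Q_1=1$ row, so $A^\vee$ is genuinely $(n-k)\times n$). After clearing denominators, the $n-k$ polynomials $Q_l(z)P^-(z)$ form a basis of the polynomials of degree $\le n-k-1$, so this determinant equals $K_{\bar J}/\prod_{j\in\bar J}P^-(\k_j)$ up to a nonzero constant independent of $\bar J$. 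Consequently $(A^\vee)_{\bar J}\propto K_{\bar J}\prod_{j\in\bar J}\bigl(v_jK'(\k_j)^2\bigr)^{-1}$ and $\tau_{A^\vee}(x,y,t)\propto\sum_{\bar J}K_{\bar J}^{\,2}\bigl(\prod_{j\in\bar J}(v_jK'(\k_j)^2)^{-1}\bigr)E_{\bar J}(x,y,t)$.

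Finally I perform the inversion. From $E_j(-x,-y,-t)=E_j(x,y,t)^{-1}$ we get $E_{\bar J}(-x,-y,-t)=E_J(x,y,t)/E_{[n]}(x,y,t)$ with $J=[n]\setminus\bar J$, and complementing the column product turns $\prod_{j\in\bar J}(\cdot)^{-1}$ into a $\bar J$-independent constant times $\prod_{j\in J}(\cdot)$; thus $\tau_{A^\vee}(-x,-y,-t)$ is proportional to $E_{[n]}(x,y,t)^{-1}\sum_J K_{\bar J}^{\,2}\bigl(\prod_{j\in J}v_jK'(\k_j)^2\bigr)E_J(x,y,t)$. The decisive input is the Vandermonde identity $K_{\bar J}^{\,2}\prod_{j\in J}K'(\k_j)^2=K_{[n]}^{\,2}K_J^{\,2}$, equivalently $\prod_{a\in J,\,b\in\bar J}(\k_a-\k_b)^2=\prod_{a\in J}K'(\k_a)^2/K_J^{\,4}$, which one obtains by splitting $\prod_{i<j}(\k_j-\k_i)$ into the pairs inside $J$, inside $\bar J$, and straddling the two. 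Inserting it, the coefficient of each $E_J$ collapses to $K_{[n]}^{\,2}K_J^{\,2}\prod_{j\in J}v_j$, a fixed multiple of its coefficient in $\tau_A$; hence $\tau_{A^\vee}(-x,-y,-t)$ equals $\tau_A(x,y,t)$ up to a constant and the factor $E_{[n]}(x,y,t)^{-1}=\exp\bigl[-(\textstyle\sum_j\k_j)x-(\sum_j\k_j^2)y-(\sum_j\k_j^3)t\bigr]$, which is of the permitted form, finishing the argument. I expect the only real obstacle to be sign bookkeeping: one must verify that the assorted signs — from the bordered Cauchy determinant, the Plücker normalization of $A^\vee$, and the orderings of $\bar J$ — combine into a single $J$-independent scalar, which is harmless since $2\partial_x^2\log$ discards it, and all the identities used are between rational functions of the $\k_i$ and $p_m$, hence hold identically.
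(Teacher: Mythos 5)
Your proposal is correct, and it is a genuinely different — and substantially more self-contained — argument than the one in the paper. The paper's proof is a one-line deduction: it invokes Lemma 10.1 of \cite{Abenda2017} together with Theorem~\ref{theo:lambda_D}, and only spells out the intermediate fact that $A$ and $\tilde A$ give the same solution once $\underline\beta$ and $\underline\lambda$ are matched via \eqref{eq:beta_lambda}; the actual duality between the Darboux/Krichever representation and the adjoint Sato representation under $(x,y,t)\mapsto(-x,-y,-t)$ is outsourced to the cited lemma. You instead prove the identity directly at the level of $\tau$-functions: you compute the Pl\"ucker vectors of both matrices (using that $A\sim\tilde A$ with $\tilde A_J\propto K_J\prod_{j\in J}v_j$ where $v_j=P^-(\kappa_j)/(P(\kappa_j)K'(\kappa_j))$, and that after factoring columns and changing basis in $H^0(X_-,D_-)$ one gets $(A^\vee)_{\bar J}\propto K_{\bar J}\prod_{j\in\bar J}(v_jK'(\kappa_j)^2)^{-1}$), then use $E_{\bar J}(-x,-y,-t)=E_J/E_{[n]}$ and the Vandermonde complementation identity $K_{\bar J}^2\prod_{j\in J}K'(\kappa_j)^2=K_{[n]}^2K_J^2$ to collapse $\tau_{A^\vee}(-x,-y,-t)$ onto $\tau_A(x,y,t)$ up to a constant and the linear-exponential factor $E_{[n]}^{-1}$, which $2\partial_x^2\log$ discards. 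All the steps check out: the basis claim for $\{Q_l(z)P^-(z)\}$ holds for a smooth divisor with distinct $p_m$ not equal to any $\kappa_i$, the complementation identity follows from splitting $\prod_{i<j}(\kappa_j-\kappa_i)^2$ into pairs inside $J$, inside $\bar J$, and straddling, and your reading of the (redundantly displayed) first row of $A^\vee$ as the $Q_1=1$ row is the right one. What your route buys is transparency: it makes the $\mathrm{Gr}(k,n)\leftrightarrow\mathrm{Gr}(n-k,n)$ duality and the precise gauge factor explicit, at the cost of a longer computation; the paper's route is shorter but opaque without the external reference.
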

\begin{proof}
 The proof follows from \cite[Lemma 10.1]{Abenda2017} and \Cref{theo:lambda_D}. Indeed, the matrix $A$ in~\eqref{eq:matrix_A} yields the same KP solution as $\tilde A$ in \eqref{eq:matrix_A_JGP} when the vectors  $\underline \beta$ and $\underline \lambda$ are chosen as in \eqref{eq:beta_lambda}, that is, $u_{A} (x,y,t) =u_{\tilde A}(x,y,t)$. 
\end{proof}

Comparing \eqref{eq:lambda_div} and \eqref{eq:beta_lambda}, we get the parametrization of the variety ${\cal H}_{[{\bf k}],v_1}$ in terms of the divisor $D=p_1+\dots+p_g-p_0$ for the soliton data in $\Gr(k,n)$, with $p_0,p_1,\dots,p_k\in X_+$ and $p_{k+1},\dots,p_g\in X_-$.  

\begin{theorem}\label{the:paramHirotagenericD}
Let $\underline{D} =(p_1,\dots,p_g)$, where $D=p_1+\cdots+p_g-p_0$ is the KP divisor for the soliton data in $\Gr(k,n)$, with $D_+$ and $D_-$ as above.
 Then, the irreducible variety ${\cal H}_{[{\bf k}],v_1}$ admits the parametrization
  \begin{align}\label{eq:Hirota_div}
    {\hat \psi}_{[\kk],v_1}\, : \, \CC^{2g+1} &\dashrightarrow \KWPB\\
(\underline{D},\underline{\kappa}) &\longmapsto ((\alpha_J)_{J\in {\cal M}},(\UU,\VV,\WW)),\nonumber
\end{align}
whose image is given by
\begin{equation}\label{eq:image_D}
\displaystyle
    \alpha_{J} \,=\, \frac{K_J^2}{ K_{I_k}^2} \, \prod_{l=1}^g \left[ \frac{P(\k_{l+1}) \prod_{j=1}^{n-k} Q_j(\k_{l+1})K' (\k_{l+1})}{P(\k_{1}) \prod_{j=1}^{n-k} Q_j(\k_{1})K'(\k_1)} \right]^{-(B^T\cc_J)_{l+1}}
\quad
 \begin{bmatrix}
        \UU \,\, \VV \,\, \WW 
    \end{bmatrix} = B \, \cdot K,
\end{equation}
which is equivalent to the parametrizations presented in~\Cref{theo:Hirota_parametrization} and \Cref{theo:Krichever1_Hirota}.   
\end{theorem}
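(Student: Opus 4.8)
The plan is to obtain $\hat\psi_{[\kk],v_1}$ as the composition of the divisor--to--parameter map of \Cref{theo:lambda_D} with the parametrization $\tilde\psi_{[\kk],v_1}$ of \Cref{theo:Krichever1_Hirota}. Fixing $\underline\kappa$ with pairwise distinct entries, I would first set $\Theta\colon(\underline D,\underline\kappa)\mapsto(\underline\lambda,\underline\kappa)$ via formula \eqref{eq:lambda_div}, where $P(z)$ and the $Q_l(z)$ are the polynomials \eqref{eq:polynomialsPandQ} attached to the splitting $D_+=p_1+\dots+p_k-p_0$, $D_-=p_{k+1}+\dots+p_g$. Substituting \eqref{eq:lambda_div} into the image formula \eqref{eq:image_lambda} is then a one--line check: the factor $\prod_{l=1}^g\lambda_l^{(B^T\cc_J)_{l+1}}$ turns into the product appearing in \eqref{eq:image_D}, with the exponent sign flipped because \eqref{eq:lambda_div} presents $\lambda_l$ as the reciprocal of the bracket in \eqref{eq:image_D}, while the $(\UU,\VV,\WW)$ component is untouched and stays equal to $B\cdot K$. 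This yields $\hat\psi_{[\kk],v_1}=\tilde\psi_{[\kk],v_1}\circ\Theta$; once $\Theta$ is shown to be birational, this gives the claimed parametrization, and since $\tilde\psi_{[\kk],v_1}$ is already identified with $\psi_{[\kk],v_1}$ in \Cref{theo:Krichever1_Hirota}, also the asserted equivalence with the parametrizations of \Cref{theo:Hirota_parametrization} and \Cref{theo:Krichever1_Hirota}.

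The substantive step is to prove that $\Theta$ is a birational self--map of $\CC^{2g+1}$, i.e.\ that for generic $\underline\kappa$ the assignment $(p_1,\dots,p_g)\mapsto(\lambda_1,\dots,\lambda_g)$ of \eqref{eq:lambda_div} is dominant with one--point generic fibre; granting this, $\hat\psi_{[\kk],v_1}=\tilde\psi_{[\kk],v_1}\circ\Theta$ is birational onto the irreducible $(2g+1)$--dimensional variety ${\cal H}_{\Gamma,[\kk],v_1}$. The key observation is that $\lambda_j$ depends on the divisor only through the monic polynomials $P(z)=\prod_{i=1}^k(z-p_i)$ and $R(z)=\prod_{i=k+1}^g(z-p_i)$, equivalently through the rational function $F\coloneqq P/R$, and that \eqref{eq:lambda_div} reads $\lambda_j=\frac{K'(\k_1)}{K'(\k_{j+1})}\cdot\frac{F(\k_1)}{F(\k_{j+1})}$ for $j\in[g]$. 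Since the passage from the unordered points $\{p_1,\dots,p_k\}$, $\{p_{k+1},\dots,p_g\}$ to $(P,R)$ (elementary symmetric functions) is birational, it suffices to invert, generically, the map $(P,R)\mapsto\bigl(F(\k_{j+1})/F(\k_1)\bigr)_{j\in[g]}$. This inversion is exactly the reconstruction of the KP divisor from the soliton data carried out in \cite{Abenda2017}: the roots of the characteristic equation of the Darboux operator at $(x,y,t)=(0,0,0)$ recover $D_+$ (hence $P$), and the pole divisor on $X_-$ of the meromorphic function matching the wave function at the nodes recovers $D_-$ (hence $R$), the vanishing $H^0(X_-,D_--Z)=0$ recalled in the text ensuring that this data is unconstrained and uniquely pinned down.

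The main obstacle is precisely this generic injectivity of $\Theta$ --- the uniqueness of the divisor recovered from $(\lambda_1,\dots,\lambda_g)$. A naive dimension count is deceptive: writing $P(\k_i)=t\,c_i\,R(\k_i)$ with $c_1=1$ and $c_i$ the constants read off from the $\lambda_j$ and $K'$, one obtains $g+1$ equations that are linear in the $k$ non--leading coefficients of $P$, the $g-k$ non--leading coefficients of $R$, and the auxiliary scalar $t=F(\k_1)$ --- a square linear system whose consistency forces one polynomial relation in $t$ of a priori degree larger than one. Selecting the correct root, and thus the uniqueness of $(P,R)$, is what the Darboux/Krichever reconstruction of \cite{Abenda2017} supplies, using that the equivalence class $[\kk]$ fixes the splitting type $(\deg D_+,\deg D_-)=(k-1,g-k)$ and that $P,R$ are coprime for general $D$. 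Throughout I would restrict to the open locus of $\CC^{2g+1}$ where $\underline\kappa$ has distinct entries and $D$ is general --- so that $P,R$ are coprime, $\langle Q_1,\dots,Q_{n-k}\rangle$ is a basis of $H^0(X_-,D_-)$, and the denominators $K'(\k_i)$, $P(\k_i)$, $Q_l(\k_i)$ are nonzero --- which is harmless since only a birational parametrization is asserted.
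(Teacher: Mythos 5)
Your proposal matches the paper's argument: the theorem is obtained there exactly by substituting the divisor formula \eqref{eq:lambda_div} of \Cref{theo:lambda_D} into the parametrization \eqref{eq:image_lambda} of \Cref{theo:Krichever1_Hirota} (equivalently, comparing \eqref{eq:lambda_div} with \eqref{eq:beta_lambda}), with the sign flip in the exponent arising precisely as you describe. Your additional discussion of the generic birationality of the divisor--to--$\lambda$ map, resolved via the Darboux/Krichever reconstruction of \cite{Abenda2017}, makes explicit a point the paper leaves implicit but relies on, and is consistent with the remark preceding the theorem in the text.
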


We remark that ${\cal H}_{[{\bf k}],v_2}$, by the duality property settled in \Cref{theo:Hirota_parametrization}, admits a parametrization in terms of the divisor $D=p_1+\dots+p_g-p_0$ for the soliton data in $\Gr(n-k,n)$, with $p_0,p_1,\dots,p_{n-k}\in X_-$ and $p_{n-k+1},\dots,p_g\in X_+$. In this case, the image of the map ${\hat \psi}_{[\kk],v_2}\, : \, \CC^{2g+1} \dashrightarrow \KWPB$ is given by 
\[
\displaystyle
    \alpha_{\bar{J}} \,=\, \frac{K_{\bar{J}}^2}{ K_{\bar{I}_k}^2} \, \prod_{l=1}^g \left[ \frac{\bar{P}(\k_{l+1}) \prod_{j=1}^{k} \bar{Q}_j(\k_{l+1})K' (\k_{l+1})}{\bar{P}(\k_{1}) \prod_{j=1}^{n-k} \bar{Q}_j(\k_{1})K'(\k_1)} \right]^{-(B^T\cc_J)_{l+1}}  
\quad\quad
 \begin{bmatrix}
        \UU \,\, \VV \,\, \WW 
    \end{bmatrix} = -B \, \cdot K,
\]
with $\bar{J} = [n]\setminus J$, $\bar{P}(z) =\prod_{i=1}^{n-k} (z-p_i)$, $\bar{Q}_1 (z)=1$, $\bar{Q}_2 (z)=1/(z-p_{n-k+1}),\dots ,\bar{Q}_{k} (z)=1/(z-p_{g})$, and $K'(z)$ as in \eqref{eq:polynomialK}.

We end this section by remarking that \eqref{eq:image_D} contains the explicit relation between ${\bf D}$ and the image of the divisor $D$ via the Abel map.
Recall that ${\mathcal A}_j (p_1,\dots,p_g)$, the $j$-th component of the Abel map, may be expressed as in~\eqref{eq:AbelJacobi}.
Then, comparing \eqref{eq:alpha_J} and \eqref{eq:image_D}, we get
\[
\log(\beta_j)\, =\, -\mathcal{A}_j (p_1,\dots,p_g) +\log\left( \frac{K'(\k_{j+1})}{K'(\k_{1})} \right)+
\frac{k}{2} R_{jj}-\sum_{l=1}^{k-1} R_{jl}.
\]
We remark that the formula above contains the components of the tropical limit of the vector of Riemann constants $\bf K$ in \eqref{eq:riemannconstant} up to a shift.

\subsection{Positive Hirota varieties of banana graphs}\label{sec:Hirota_pos}

The KP multi-solitons form a distinguished class of solutions of the KP equation. Since this equation was originally motivated by the study of shallow water waves, its real and regular solutions are the ones which are relevant for applications. Following \cite{DubNat}, real and regular finite-gap KP solutions are parametrized by KP divisors à la Dubrovin--Natanzon on M-curves. According to \cite{KW2013}, a KP multi-soliton solution generated by the soliton data $\{ A\in \Gr(k,n), \k_1,\dots,\k_n\}$ is real and regular if and only if $\k_1<\dots<\k_n$ and $A$ is in the totally non-negative part of the Grassmannian, that is all Pl\"ucker coordinates of $A$ are real and share the same sign. The combinatorial properties of totally non-negative Grassmannians, first unveiled in Postnikov's seminal work \cite{Pos}, have been used by Abenda and Grinevich \cite{Abenda2018,Abenda2019,Abenda2022} to prove that every real regular KP multi-soliton arises as a limit of a real regular finite-gap solution. More precisely, they proved that, for any real and regular KP multi-soliton, one may associate to its soliton data a spectral curve which is an MM-curve in the sense of \cite{kummer2024maximal} and a KP divisor satisfying the reality and regularity conditions settled by Dubrovin and Natanzon in \cite{DubNat}. This result establishes a direct bridge between the positive Grassmannian description of soliton data and the algebraic–geometric description of real and regular finite-gap solutions on M-curves developed in \cite{DubNat}.

In the case of tropical curves associated with banana graphs, the condition $\k_1<\dots<\k_n$ ensures that the tropical curve is an MM-curve in the sense of \cite{kummer2024maximal}, that is, the tropical degeneration of a maximal real hyperelliptic curve of genus $g=n-1$.
Using the explicit form of the matrix $A$ in \eqref{eq:matrix_A}, the positivity of all maximal minors is equivalent to the additional requirement that $\beta_1,\dots,\beta_g>0$. In other works, the corresponding soliton solutions are associated with points in the totally positive part of the Grassmannian, which we denote $\Gr_{>0}(k,n)$. These conditions are consistent with those found in \cite{Abenda2017}, where it is proven that the KP soliton associated with the data $(\underline \lambda, \underline \k)$ is real and regular if and only if $\k_1<\dots<\k_n$ and $\lambda_1,\dots,\lambda_g>0$, that is, the matrix $\tilde A$ in \eqref{eq:matrix_A_JGP} is totally positive.

In \cite{Abenda2017}, it was also shown that the KP divisor associated with a real and regular KP solution on $X$ satisfies the Dubrovin--Natanzon conditions, i.e., each finite oval of the MM-curve contains exactly one divisor point. In the affine coordinates used throughout this manuscript, if we fix the combinatorial type $[\kk]$, and write $D=D_++D_-$, with $D_+=p_1+\cdots+p_k-p_0$, and $D_- =p_{k+1}+\cdots p_{g}$, then the positivity condition $A \in  \Gr^+(k,n)$ expressed as the Dubrovin--Natanzon condition. This implies that there exists a permutation $\pi$ of the indices $\{1,\dots,g\}$ such that
\begin{equation}\label{eq:pos_cond}
    \k_1 < p_{\pi(1)} < \k_2 < \cdots < \k_{g} < p_{\pi(g)} < \k_{g+1}.
\end{equation}
We now introduce the following definition.
\begin{definition}\label{def:positiveHirota}
   The \textit{real and positive Hirota variety}  ${\mathcal H}_{\Gamma,[\kk],v_1} (\RR)^+$ associated with the equivalence class $[\kk]$ and the graph vertex $v_1$ is the image of the restriction of the map ${\hat \psi}_{[\kk],v_1}$, defined in~\eqref{eq:Hirota_div}, 
to the pairs $(\underline{D}, \underline{\kk}) \in \RR^{2g+1}$ satisfying $D_+=p_1+\cdots+p_k -p_0$, $D_+ =p_{k+1}+\cdots p_{g}$ and \eqref{eq:pos_cond}.
\end{definition}
For a fixed combinatorial type $[\kk]$ of the divisor $D$ on the tropical curve, the real and positive Hirota variety ${\mathcal H}_{\Gamma,[\kk],v_1} (\RR)^+$ thus provides a positive-geometric counterpart of the space of real and regular finite-gap data associated with maximal real algebraic curves in the limit \cite{Abenda2018}.  
Note that ${\mathcal H}_{\Gamma,[\kk],v_2} (\RR)^+$ has a similar characterization, with the divisor $D$ splitting as $D_+= p_{n-k+1}+\cdots + p_g$, $D_-= p_1+\cdots+ p_{n-k}-p_0$. Therefore,
$${\mathcal H}_{\Gamma,[\kk]} (\RR)^+ \,=\,{\mathcal H}_{\Gamma,[\kk],v_1} (\RR)^+ \cup {\mathcal H}_{\Gamma,[\kk],v_2} (\RR)^+$$ 
is parametrized by degree-$(g-1)$ divisors $D$ satisfying~\eqref{eq:pos_cond} and such that
\[(\deg D_+ , \deg D_- ) \, = \, (k-1,g-k) .\]
Finally, the positive part of ${\cal H}_{\Gamma,\cal{V}_Q}$, denoted ${\cal H}_{\Gamma,\cal{V}_Q}(\RR)^+$, is obtained as the union of ${\mathcal H}_{\Gamma,[\kk]} (\RR)^+$ over all equivalence classes $[\kk]$ of Voronoi vertices. It is therefore the union of the positive regions arising from several positive Grassmannian points corresponding to all admissible combinatorial types $(\deg D_+ , \deg D_- ) \, = \, (k-1,g-k)$, $k\in [g]$ that satisfy \eqref{eq:pos_cond}.
This combinatorial description suggests that the closure of ${\cal H}_{\Gamma,\cal{V}_Q}(\RR)^+$ is connected and coincides with the positive part of ${\cal H}_{\Gamma}$; in other words, it provides a faithful characterization of the positive part of the tropical theta divisor in terms of KP divisors à la Dubrovin--Natanzon~\cite{DubNat}. We will explore this question further in a subsequent work.

\section{Hirota varieties of Delaunay polytopes}\label{sec:Hirota_of_hypers}

In this section, we study the polynomial equations that describe the Hirota variety associated to Delaunay polytopes that are combinatorially equivalent to hypersimplices.

The Hirota variety ${\cal H}_{\cal D}$ of a Delaunay set ${\cal D}$ 
containing $d$ points was first introduced in \cite{AgoFevManStu}, as the variety of all points $((\alpha_\cc)_{\cc\in {\cal D}},(\UU,\VV,\WW))$ in the parameter space $(\CC^*)^d\times \mathbb{W}\PP^{3g-1}$ such that the expression $\tau_{\cal D}(x,y,t)$ as in~\eqref{eq:tau_from_theta} gives a KP solution. 

One of the motivations for distinguishing between Hirota variety of a graph and that of a Delaunay polytope is that different tropical curves may give rise to the same Delaunay set; see, for instance, the list of genus~$3$ tropical curves and their Delaunay sets in~\cite[Section~4]{AgoCelStrStu}. Consequently, a given $\tau$-function $\tau_{\cal D}(x,y,t)$ may be realized as a KP soliton associated with several different curves. The Hirota variety of a graph eliminates this ambiguity by fixing all the algebro-geometric data $(X,p_0,\zeta,D)$ in the tropical limit. 

For example, the Hirota variety ${\cal H}_{\cal D}$ of a $3$-dimensional simplex parametrizes solutions arising from curves given by the union of four lines, the union of two conics, or a conic together with two lines. The component of ${\cal H}_{\cal D}$ parametrizing solutions arising from a fixed curve is determined by the data of the Riemann matrix of the underlying tropical curve, the matroid, and the periods of meromorphic differentials in the tropical limit, all of which appear explicitly in the parametrization of $((\alpha_\cc)_{\cc\in {\cal D}},(\UU,\VV,\WW))$. 

This distinction was unnecessary in \cite{FevMan}, since the hypercube occurs as Delaunay polytope only for the family of irreducible rational nodal curves.

The equations defining the Hirota variety associated to a Delaunay polytope were first studied in \cite[Corollary~6]{AgoFevManStu}. They can be described by looking at the points in the set 
\[{\cal D}^{[2]} \, \coloneqq \, \{ \cc_i+\cc_j \,\, | \,\, \cc_i,\cc_j\in {\cal D}, \cc_i\neq \cc_j\}.\]
We say that a point ${\bf d}\in {\cal D}^{[2]}$ is \textit{uniquely attained}
if there exists precisely one index pair $(i,j)$ such that $\cc_i+\cc_j={\bf d}$. In that case, $(i,j)$ is a \textit{unique pair} and contributes the quartic 
\begin{equation}\label{eq:quartic_uniquely}
    P_{ij}(\UU,\VV,\WW)\,\coloneqq\, P((\cc_i-\cc_j)\cdot \UU, (\cc_i-\cc_j)\cdot \VV, (\cc_i-\cc_j)\cdot \WW),
\end{equation}
where $P(x,y,t)=x^4-4xt+3y^2$ is the KP dispersion relation. Note that the expression in~\eqref{eq:quartic_uniquely} does not change if we switch $i$ and $j$. When ${\bf d}\in {\cal D}^{[2]}$ is not uniquely attained, we associate it with the polynomial
\begin{equation}
   P_{{\bf d}} (\UU,\VV,\WW)\, \coloneqq\, \sum_{\cc_i+\cc_j={\bf d}} P_{ij} (\UU,\VV,\WW) \alpha_{\cc_i}\alpha_{\cc_j}.
\end{equation}
We now study these equations in detail in the case where ${\cal D}$ is combinatorially equivalent to a hypersimplex $\Delta_{k,n}$.

\subsection{Hirota varieties of hypersimplices}

Let $\Gamma$ be a genus $g$ metric graph with tropical Riemann matrix $Q$ as introduced in \Cref{sec2}. For simplicity, we denote by ${\cal D} = {\cal D}_{\aa,Q}$ the Delaunay set of $\Gamma$ for the fixed Voronoi vertex $\aa$ and write $d$ for its cardinality. We assume that ${\cal D}$ is combinatorially equivalent to the hypersimplex $\Delta_{k,n}$. As motivated in \cite{FevMan} in the case of irreducible rational nodal curves, we are interested in the subvarieties $H_{ \Delta_{k,n}}^I$ of the Hirota varieties ${\cal H}_{\Delta_{k,n}}$ given by the Zariski closure of the set of points $((\alpha_\cc)_{\cc},(\UU,\VV,\WW))\in {\cal H}_{\Delta_{k,n}}$ such that $\UU\neq {\bf 0}$.

As from \Cref{cor:Delaunay_in_Rg} the polytope $\mathcal{D}$ is combinatorially equivalent to the hypersimplex $\Delta_{k,n}$, we can compute the elements in $\mathcal{D}^{[2]}$ and their multiplicities by looking at the set $\Delta_{k,n}^{[2]}$. In what follows, we describe this set combinatorially. 
\begin{lemma}
    We have 
   $$\Delta_{k,n}^{[2]} \,=\, \left\{ \sum_{i \in I} \ee_i + 2\sum_{j \in J} \ee_j\,:\, I \in \binom{[n]}{2k-2d}, \, J \in \binom{[n]}{d}, \, 0 \leq d \leq k-1, \, I\cap J=\emptyset 
   \right\} \,.$$

   Fixing $d$ to be the number of $2$ coordinates, each point $\sum_{i \in I} \ee_i + 2\sum_{j \in J} \ee_j$ in $\Delta_{k,n}^{[2]}$ is attained $\frac{1}{2}\binom{2k-2d}{k-d}$ times, and there are $\binom{n}{d}\binom{g+1-d}{2k-2d}$ such points.
\end{lemma}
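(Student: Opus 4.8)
The plan is to reduce the claim to an elementary bijective count on subsets of $[n]$, noting first that $\Delta_{k,n}^{[2]}$ is, by definition, the set of sums $\sum_{i\in A}\ee_i+\sum_{i\in B}\ee_i$ over pairs of distinct subsets $A,B\in\binom{[n]}{k}$. First I would record the key identity: setting $J\coloneqq A\cap B$, $d\coloneqq |J|$, and $I\coloneqq A\triangle B$, a coordinatewise check gives $\sum_{i\in A}\ee_i+\sum_{i\in B}\ee_i=\sum_{i\in I}\ee_i+2\sum_{j\in J}\ee_j$, where $I\cap J=\emptyset$, $|I|=2k-2d$, and $A\neq B$ forces $d\le k-1$. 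This proves the inclusion of $\Delta_{k,n}^{[2]}$ in the set on the right-hand side of the lemma. For the reverse inclusion, given disjoint $I,J$ with $|I|=2k-2d$, $|J|=d$ and $0\le d\le k-1$, the bound $d\le k-1$ guarantees $|I|=2(k-d)\ge 2$, so $I$ admits a splitting $I=I_1\sqcup I_2$ with $|I_1|=|I_2|=k-d$, and then $A\coloneqq J\sqcup I_1$ and $B\coloneqq J\sqcup I_2$ are distinct elements of $\binom{[n]}{k}$ realizing the point; this gives equality of the two sets.

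Next I would count the points. For a point $\sum_{i\in I}\ee_i+2\sum_{j\in J}\ee_j$ with $I\cap J=\emptyset$ all coordinates lie in $\{0,1,2\}$, so $J$ and $I$ are recovered as the supports of the coordinates equal to $2$ and to $1$, respectively; hence the parametrization by pairs $(J,I)$ is injective, and the subsets of $\Delta_{k,n}^{[2]}$ indexed by distinct values of $d$ are disjoint. Fixing $d$, the corresponding points are therefore in bijection with the choices of $J\in\binom{[n]}{d}$ followed by $I\in\binom{[n]\setminus J}{2k-2d}$, which number $\binom{n}{d}\binom{n-d}{2k-2d}=\binom{n}{d}\binom{g+1-d}{2k-2d}$ since $n=g+1$.

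Finally, for the multiplicity, fix such a point $p=\sum_{i\in I}\ee_i+2\sum_{j\in J}\ee_j$. Any unordered pair $\{A,B\}$ of distinct $k$-subsets with $\sum_{i\in A}\ee_i+\sum_{i\in B}\ee_i=p$ must satisfy $A\cap B=J$ and $A\triangle B=I$, so it is determined by an unordered partition of $I$ into two blocks of size $k-d$; there are $\binom{2k-2d}{k-d}$ ordered such partitions and, since $k-d\ge1$ forces the two blocks to be distinct, exactly $\tfrac12\binom{2k-2d}{k-d}$ unordered ones, which is an integer because $\binom{2m}{m}=2\binom{2m-1}{m-1}$ for $m\ge1$ (and, as a sanity check, equals $1$ when $d=k-1$, matching the edges of $\Delta_{k,n}$). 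Assembling these three observations proves the lemma. I do not expect a genuine obstacle here; the only delicate points are keeping track of the unordered-pair convention implicit in the definition of $\mathcal{D}^{[2]}$ and verifying that the division by $2$ is exact, both of which I would flag explicitly.
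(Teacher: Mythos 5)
Your proposal is correct and follows essentially the same approach as the paper: characterize points of $\Delta_{k,n}^{[2]}$ by their $\{0,1,2\}$ coordinate pattern with coordinate sum $2k$, count them by choosing the supports of the $2$'s and $1$'s, and obtain the multiplicity by distributing the $2k-2d$ ones between the two summands with a factor of $\tfrac12$ for unordered pairs. Your write-up is merely more explicit than the paper's (both inclusions, the $A\cap B$/$A\triangle B$ bookkeeping, and the exactness of the division by $2$), which is a welcome level of care but not a different argument.
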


\begin{proof}
   It is clear that any sum of two points in $\Delta_{k,n}$ will have coordinates $0, 1,$ or $2$. Furthermore, the total sum of the coordinates must be exactly $2k$. Letting $d$ be the number of coordinates equal to $2$, we find that there are $\binom{n}{d} \cdot \binom{n-d}{2k-2d}$ points satisfying the above conditions. Furthermore, each such point can be expressed as a sum of two points in $\Delta_{k,n}$ in exactly $\frac{1}{2}\binom{2k-2d}{k-d}$ ways, as such an expression involves distributing the $2k-2d$ ones among the two summands.
   
   \end{proof}

Note that $\Delta_{1,n}^{[2]}$ coincides with the set of vertices of $\Delta_{2,n}$. Furthermore, to obtain the points in the original set $\DaQ^{[2]}$ it is enough to shift the points in $\Delta_{k,n}^{[2]}$ by the vector $-2\mathbf{s}_{\aa}$. 

We remark that the maximum number of times a point in $\Delta_{k,n}^{[2]}$ is attained rises quickly with $k$, and is equal to $\frac{1}{2}\binom{2k}{k}$. The first few values are $1, 3, 10, 35, 126$. This leads to increasingly complicated expressions for generators of the Hirota variety. Since $k$ is bounded by $\frac{n}{2}$, the complexity of generators of Hirota varieties increases with the genus. The number of equations cutting out the Hirota variety also rises with the genus, however we may simplify the list using the following result, which shows that each equation corresponds to a face of $\Delta_{k,n}$ and parallel faces contribute redundant equations. We define the \emph{direction} of a face $F$ to be the set of coordinates in which points of $F$ may vary, i.e. $\{i: x_i \text{ is not fixed on } F\}$.

\begin{theorem}
    Each point of $\Delta_{k, n}^{[2]}$ corresponds to a centrally symmetric odd-dimensional face of $\Delta_{k, n}$. There are $ \binom{g+1}{2\ell}$ face directions for each dimension $1 \leq 2\ell-1 \leq g$, and all faces with the same direction contribute the same quartic as a generator of the Hirota variety.
\end{theorem}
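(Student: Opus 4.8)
The plan is to reduce everything to the combinatorics of $0/1$-vectors. Throughout write $\mathbf 1_S\in\RR^n$ for the indicator of $S\subseteq[n]$, so the vertices of $\Delta_{k,n}$ are the $\mathbf 1_S$ with $S\in\binom{[n]}{k}$. By \Cref{thm:BTD} the translate $B^T(\mathcal D)+\mathbf s_{\aa}$ is exactly the vertex set of $\Delta_{k,n}$, and all quantities entering the Hirota equations — the differences $\cc_i-\cc_j$ and the sums $\cc_i+\cc_j$ up to the common shift $2\mathbf s_{\aa}$ — are insensitive to this translation, so I may argue directly inside $\Delta_{k,n}$, remembering that $B^T$ restricts to a linear isomorphism from $\RR^g$ onto $\{x_1+\dots+x_n=0\}\subset\RR^n$ (\Cref{cor:Delaunay_in_Rg}); consequently a difference of two Delaunay points is determined by its image in $\RR^n$. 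The three assertions will be proved in order: identify the face attached to a point of $\Delta_{k,n}^{[2]}$ and characterize which faces occur; count the directions of a given dimension; and show the contributed quartic depends only on the direction.

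\emph{The face and its properties.} Fix $\mathbf d=\mathbf 1_I+2\mathbf 1_J\in\Delta_{k,n}^{[2]}$ with $I\cap J=\emptyset$, $|J|=d$, $|I|=2k-2d$, and set $\ell:=k-d=\tfrac12|I|$. Two vertices $\mathbf 1_{S_1},\mathbf 1_{S_2}$ satisfy $\mathbf 1_{S_1}+\mathbf 1_{S_2}=\mathbf d$ precisely when $S_1\cap S_2=J$ and $S_1\cup S_2=I\cup J$; equivalently $T:=S_1\cap I$ ranges over $\binom{I}{\ell}$ and $S_2\cap I=I\setminus T$. Hence the vertices occurring in such pairs are the $\mathbf 1_S$ with $J\subseteq S\subseteq I\cup J$, whose convex hull is the face
\[
F_{\mathbf d}\;=\;\Delta_{k,n}\cap\{\,x_j=1\ (j\in J),\ x_i=0\ (i\notin I\cup J)\,\}\;\cong\;\Delta_{\ell,2\ell},
\]
of dimension $2\ell-1$, which is odd; the involution $x\mapsto\mathbf d-x$ preserves $F_{\mathbf d}$ and swaps $\mathbf 1_{S_1}\leftrightarrow\mathbf 1_{S_2}$, so $F_{\mathbf d}$ is centrally symmetric about $\tfrac12\mathbf d$. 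Conversely, any face of $\Delta_{k,n}$ is obtained by fixing sets $S_0,S_1$ of coordinates to $0$ and $1$, hence is combinatorially a hypersimplex $\Delta_{a,m}$ with $a=k-|S_1|$, $m=n-|S_0|-|S_1|$; a one-line barycenter computation (the putative center is $\tfrac{a}{m}\mathbf 1_{[m]}$, whose reflection of a vertex is a $0/1$-vector only when $m=2a$) shows that for $a\ge1$ this face is centrally symmetric iff $m=2a$, and then it equals $F_{\mathbf d}$ for $\mathbf d=\mathbf 1_{[n]\setminus(S_0\cup S_1)}+2\mathbf 1_{S_1}\in\Delta_{k,n}^{[2]}$. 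This establishes the bijection between $\Delta_{k,n}^{[2]}$ and the centrally symmetric odd-dimensional faces. Moreover the direction of $F_{\mathbf d}$ — the coordinate set in which it varies — is exactly $I$, of even size $2\ell$, while $J$ only selects the parallel translate; so the directions realizing dimension $2\ell-1$ are the $2\ell$-subsets of $[n]=[g+1]$, each realized (choose any $S_1\subseteq[n]\setminus I$ with $|S_1|=k-\ell$, possible exactly for $1\le\ell\le\min\{k,n-k\}$), giving $\binom{g+1}{2\ell}$ of them.

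\emph{The quartic depends only on the direction.} For $\mathbf d=\mathbf 1_I+2\mathbf 1_J$ every contributing pair has difference $\mathbf 1_{S_1}-\mathbf 1_{S_2}=\mathbf 1_T-\mathbf 1_{I\setminus T}$ with $T\in\binom{I}{\ell}$, which does not involve $J$; by the isomorphism above each such $\RR^n$-vector is the image of a unique $\cc_i-\cc_j\in\ZZ^g$, and $P_{ij}(\UU,\VV,\WW)=P\big((\cc_i-\cc_j)\cdot\UU,(\cc_i-\cc_j)\cdot\VV,(\cc_i-\cc_j)\cdot\WW\big)$ depends on the pair only through this difference. Hence the family $\{P_{ij}:\cc_i+\cc_j=\mathbf d\}$, indexed by $T\in\binom{I}{\ell}$, is determined by $I$ alone. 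When $\ell=1$ the point is uniquely attained (multiplicity $\tfrac12\binom{2}{1}=1$), so $\mathbf d$ contributes the single quartic $P_I(\UU,\VV,\WW)$, identical for every parallel edge. When $\ell\ge2$ its contribution is
\[
P_{\mathbf d}(\UU,\VV,\WW)\;=\;\tfrac12\sum_{T\in\binom{I}{\ell}}P_{I,T}(\UU,\VV,\WW)\,\alpha_{\mathbf 1_J+\mathbf 1_T}\,\alpha_{\mathbf 1_J+\mathbf 1_{I\setminus T}},
\]
and for a parallel point $\mathbf d'=\mathbf 1_I+2\mathbf 1_{J'}$ the very same expression results after the relabeling $\alpha_{\mathbf 1_J+\mathbf 1_T}\leftrightarrow\alpha_{\mathbf 1_{J'}+\mathbf 1_T}$ of coefficient variables induced by the lattice translation $\mathbf 1_{J'}-\mathbf 1_J$ carrying $F_{\mathbf d}$ to $F_{\mathbf d'}$. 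Thus parallel faces impose the same condition and yield no new generator.

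\emph{Main obstacle.} The delicate point is the last step: one must track carefully that the difference vectors $\cc_i-\cc_j$ live in $\ZZ^g$ whereas the convenient description of the contributing pairs lives in $\RR^n$ (the isomorphism $B^T$ bridging them), and that for faces of dimension $\ge3$ the coincidence of quartics for parallel faces is an equality of polynomials only after the canonical relabeling of the $\alpha$-variables — it is a literal equality only in the one-dimensional case. The converse half of the first assertion (no other faces are centrally symmetric) rests on the short barycenter computation and is routine.
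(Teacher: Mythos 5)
Your treatment of the first two assertions is correct and follows essentially the paper's route: you identify the minimal face containing a contributing pair as a copy of $\Delta_{\ell,2\ell}$ cut out by the coordinates where $\mathbf d$ equals $0$ or $2$, observe its central symmetry, and count directions by choosing the $2\ell$-subset of moving coordinates; you even supply the converse (that no other faces are centrally symmetric), which the paper leaves implicit.

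The third assertion is where your argument has a genuine gap, and you half-acknowledge it in your ``main obstacle'' paragraph without repairing it. Showing that $P_{\mathbf d}$ and $P_{\mathbf d'}$ become the same expression \emph{after relabeling} the $\alpha$-variables does not show that they contribute the same generator: two polynomials that differ by a nontrivial permutation of the coordinates on $(\CC^*)^d$ are distinct polynomials and in general cut out distinct hypersurfaces, so ``parallel faces impose the same condition'' does not follow from the relabeling. The paper closes this gap by using the specific form of the coefficients coming from the tropical limit, $a_{\cc}=\exp[\tfrac12\cc^{T}R\cc]$: writing $\uu$ for the translation carrying $F$ to $F'$ (supported outside the direction $I_1$), one computes
\[
a_{\cc_i+\uu}\,a_{\cc_j+\uu}\;=\;a_{\cc_i}a_{\cc_j}\cdot\exp\bigl[\uu^{T}R\mathbf d+\uu^{T}R\uu\bigr],
\]
and the crucial point is that the multiplier depends only on $\uu$ and $\mathbf d=\cc_i+\cc_j$, not on the individual pair. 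Hence every term of the quartic for $F'$ equals the corresponding term for $F$ times one and the same nonzero scalar, so the two quartics are literally proportional --- not merely related by a substitution of variables --- and only then do parallel faces yield a redundant generator. This quadratic-form computation is the missing step in your write-up; without it (or some substitute exploiting the structure of the $\alpha$'s) the third claim remains unproved for faces of dimension at least $3$.
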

\begin{proof}
    Throughout this proof, we use $\Delta = \Delta_{k, n}$. A point ${\bf d} = \cc_1 + \cc_2 \in \Delta$ can be associated (non-uniquely) to the line segment connecting $\cc_1$ to $\cc_2$. This determines a diagonal of some face of $\Delta$. Since $\cc_1,\cc_2\in\{0,1\}^n$ with each having exactly $k$ ones, their sum ${\bf d}$ has entries in $\{0,1,2\}$, with all entries of $\cc$ summing to $2k$. In particular, the number of ones must be even. Let $I_1 = \{\,i : (\cc_1)_i=1\}$, and let $2\ell = |I_1|$. Then, for $i \not \in I_1$, the coordinates of $(\cc_1)_i, (\cc_2)_i$ are equal and uniquely determined by the value of ${\bf d}_i$. On $I_1$, exactly one of $\cc_1,\cc_2$ has a $1$. Hence, the minimal face $F\subset\Delta$ containing $\cc_1,\cc_2$ is
\[
F \;=\;
\bigl\{\,x\in\Delta : x_i=1\ (i: {\bf d}_i = 2 ),\;x_i=0\ (i: {\bf d}_i = 0)\bigr\}
\;\cong\;
\Delta_{\ell,\;2\ell},
\]
which has dimension $2\ell - 1.$ Since each such face $F$ of $\Delta$ is centrally symmetric, all its diagonals (if any exist) intersect at the unique center of symmetry, so any two endpoints of a diagonal of that face sum to twice that center and hence to the same vector. In this way, every point of $\Delta^{[2]}$ corresponds uniquely to a centrally-symmetric odd-dimensional face of $\Delta$. The sum ${\bf d}$ is attained by the sums of endpoints of the $\frac{1}{2} \binom{2\ell}{\ell}$ diagonals of the face. Next, two faces $F,F'$ have the same direction precisely when they fix the same moving coordinate set $I_1$.  Given $I_1$, there are $\binom{g+1}{2\ell}$ such choices.  

Finally, we show that the quartic defining each Hirota generator depends only on the moving coordinates $I_1$, hence is identical (up to multiplication by a constant) for all faces in the same parallel class. 

Fix a direction $I_1$ and let $F,F'$ be two faces with that direction.  
    Let ${\bf d}=\cc_1+\cc_2$ and ${\bf d}'=\cc_1'+\cc_2'$ be the corresponding points in $\Delta^{[2]}$.
    On the fixed coordinates, the two faces differ by a vector 
    \[
        \uu \,:=\, \mathbf{1}_{F'}-\mathbf{1}_{F}\in\{0,1\}^n,
    \]
    supported outside $I_1$, so that ${\bf d}'={\bf d}+2\uu$.
    For each pair $(\cc_i,\cc_j)$ with $\cc_i+\cc_j={\bf d}$ we define a corresponding pair
    $(\tilde\cc_i,\tilde\cc_j)=(\cc_i+\uu,\cc_j+\uu)$ satisfying $\tilde\cc_i+\tilde\cc_j={\bf d}'$.
    Since $\tilde\cc_i-\tilde\cc_j=\cc_i-\cc_j$, the quartic
    \(
      P_{{\bf d}}=P_{{\bf d}'}
    \)
    is unchanged.

    The exponential coordinates $a_{\cc}$ are given by $a_{\cc}=\exp\!\bigl[\tfrac12\cc^{T} R\cc\bigr]$ for a $n\times n$ symmetric matrix $R$.
    Then
    \[
      a_{\tilde\cc_i}a_{\tilde\cc_j}
      \,=\,\exp\!\Bigl[\tfrac12(\cc_i+\uu)^T R(\cc_i+\uu) + \tfrac12(\cc_j+\uu)^T R(\cc_j+\uu)\Bigr]
      \,=\,a_{\cc_i}a_{\cc_j}\cdot
        \exp\!\bigl[\uu^T R{\bf d}+\uu^T R\uu\bigr].
    \]
    The multiplier $\exp[\uu^T R{\bf d}+\uu^T R\uu]$ depends only on $\uu$ and the common direction $I_1$, not on the particular pair $(\cc_i,\cc_j)$.  
    Therefore each term $a_{\cc_i}a_{\cc_j}$ appearing in the quartic generator for $F$ rescales by the same constant factor when passing to $F'$.
    Hence the corresponding quartics differ by an overall scalar multiple:
    \[
      \sum_{\tilde{\cc}_i+\tilde{\cc}_j={\bf d}'} P_{{\bf d}'}\,a_{\tilde\cc_i}a_{\tilde\cc_j}
      = 
      \exp[\uu^T R{\bf d}+\uu^T R\uu]
      \sum_{\cc_i+\cc_j={\bf d}} P_{{\bf d}}\,a_{\cc_i}a_{\cc_j}.
    \]
    Thus all faces with the same direction $I_1$ contribute the same quartic, up to a nonzero scalar multiple, to the ideal defining the Hirota variety. This completes the proof.
\end{proof}
 
\begin{example}[Genus-$3$]\label{ex:genus3_Hirotaeq}
Let $\aa = (-\frac{1}{2},\frac{1}{2},\frac{1}{2})\in [\mathbf{2}]\subset\mathcal{V}_Q$. From \Cref{cor:Delaunay_in_Rg}, the corresponding Delaunay polytope $\DaQ$ is combinatorially equivalent to the hypersimplex. In particular, we have $B^T(D_{\aa,Q}) + {\bf s}_\aa= \Delta_{2,4} $, where ${\bf s}_\aa = (0,0,1,1)$ and the hypersimplex $\Delta_{2, 4} = \text{Conv} (\ee_i+\ee_j \, : \, \{i,j\}\in\binom{[4]}{2}).$
The set $\Delta_{2,4}^{[2]}$ consists of all $\binom{4}{1}\cdot\binom{3}{2} = 12$ permutations of the entries of $(1, 1, 2, 0)$, each attained once, and the point $(1, 1, 1, 1)$, attained 3 times as $(1, 1, 1, 1) = (1, 1, 0, 0) + (0, 0, 1, 1) = (1, 0, 1, 0) + (0, 1, 0, 1) = (1, 0, 0, 1) + (0, 1, 1, 0).$ The face directions corresponding to edges are given by vectors $\pm(\ee_i-\ee_j)$ for $\{i,j\}\in \binom{[4]}{2}$, and contribute the quartics $ P(U_i-U_j,V_i-V_j,W_i-W_j)$. There is a unique 3-dimensional face direction contributing the quartic
\begin{align*}
&\alpha_{1100}\alpha_{0011}P(U_1+U_2-U_3-U_4, V_1+V_2-V_3-
  V_4,W_1+W_2-W_3-W_4)+\\
&\alpha_{1010}\alpha_{0101}P(U_1-U_2+U_3-U_4, V_1-V_2+V_3-
  V_4,W_1-W_2+W_3-W_4)+\\
&\alpha_{1001}\alpha_{0110}P(U_1-U_2-U_3+U_4, V_1-V_2-V_3+
  V_4,W_1-W_2-W_3+W_4),
\end{align*}
where we are identifying $\alpha_\cc = \alpha_{B^T\cc+\ssa}$.
\end{example}

We close with a brief perspective towards the tropical Schottky problem: it asks when a principally polarized tropical abelian variety $(\mathbb R^g/\mathbb Z^g,Q)$, with $Q$ a real positive semidefinite symmetric matrix, arises as the tropical Jacobian of a metric graph. For the banana graph, the tropical Riemann matrix $Q$ gives a Voronoi polytope $V_Q\subset\mathbb R^g$ with the explicit vertex set as in~\Cref{thm:vertices_voronoi_Rg}. Each equivalence class $[\mathbf k]\subset\mathcal V_Q$ corresponds to a strongly connected orientation with exactly $k$ edges leaving $v_1$ and $n-k$ leaving $v_2$ (equivalently, $n-k$ entering $v_1$); consequently, for every $\mathbf a\in[\mathbf k]$ the associated Delaunay polytope is combinatorially equivalent to the hypersimplex $\Delta_{k,n}$, as proven in~\Cref{cor:Delaunay_in_Rg}, and the edges whose orientation flips within $[\mathbf k]$ form circuits, see~\Cref{cor:circuits}. Fixing $\mathbf a\in[\mathbf k]$ determines the combinatorics of the tropical theta function and, for a marked vertex $v\in\{v_1,v_2\}$, the oriented matroid $\mathcal M_{\mathbf a,v}$. The matroids are dual to each other. The KP parameters then decompose $(\alpha_J)_{J\in\mathcal M_{\mathbf a,v}}$ and $[\mathbf U\ \mathbf V\ \mathbf W]=\pm\,B\cdot K,$
where the sign is $+$ for $v=v_1$ and $-$ for $v=v_2$ (space–time inversion). Under admissible hyperelliptic degenerations, the class $[\mathbf k]$ records how the divisor $D$ in $g$ points split between the two components $X_\pm$ of the central fibre: $k$ on $X_+$ and $g-k$ on $X_-$ if $X_+$ is the component carrying the essential singularity $p_0$; $k-1$ on $X_+$ and $g-k+1$ on $X_-$ in the other case. Hence, the Voronoi vertices encode the combinatorics governing the limiting KP divisors. Different choices of $p_0$,  correspond to different choices of divisor $D$, within the same degree partition of $D-p_0$. In summary, for banana graphs of any genus $g$ the combinatorial description of the  Voronoi polytope $V_Q$ given in~\Cref{thm:vertices_voronoi_Rg} (with vertices grouped into the classes $[\mathbf k]$) gives a way to certify that $Q$ is the Jacobian of a banana graph and indexes all KP multi-soliton families supported on that Jacobian via strongly connected orientations. This frames a \emph{KP realization problem} of the tropical Jacobian and a motivation for generalizations beyond banana graphs. In this sense, our construction is a tropical shadow of Shiota’s KP–theoretic solution of the classical Schottky problem \cite{Shiota1986Schottky}. We expect this question to reveal interesting connections also with work by Baker and Norine on 
other ways to characterize realizable divisors on tropical curves such as via chip--firing games, see \cite[Theorem~1.9]{baker2007riemann}.

\vspace{1cm}
\noindent {\bf Acknowledgments.} We would like to thank the Max Planck Institute of Molecular Cell Biology and Genetics and the Center for Systems Biology Dresden for hosting the authors and providing the opportunity to work together. SA was supported by HORIZON-MSCA-2022-SE-01-01 CaLIGOLA, COST Action CaLISTA CA21109, GNFM-INdAM, and INFN projects MMNLP and GAST. CF has received funding from the
European Union’s Horizon 2020 research and innovation programme under the Marie Sk\l odowska-Curie grant agreement No 101034255); and by the European Research Council (ERC) under the European Union’s Horizon Europe research and innovation programme, grant agreement 101040794 (10000DIGITS). YM has received support from the National Science Foundation under Award DMS2402069, the UC President's Postdoctoral Fellowship Program, and the Bob Moses Fund of the Institute for Advanced Study.

{\small
\bibliography{References}}

@article{ICHIKAWA2024114748,
title = {Tropical curves and solitons in nonlinear integrable systems},
journal = {Chaos, Solitons \& Fractals},
volume = {182},
pages = {114748},
year = {2024},
issn = {0960-0779},
doi = {https://doi.org/10.1016/j.chaos.2024.114748},
url = {https://www.sciencedirect.com/science/article/pii/S096007792400300X},
author = {Takashi Ichikawa}
}

@article{ichikawa2023periods,
  title={Periods of tropical curves and associated {KP} solutions},
  author={Ichikawa, Takashi},
  journal={Communications in Mathematical Physics},
  volume={402},
  number={2},
  pages={1707--1723},
  year={2023},
  publisher={Springer}
}

@article{ranestad2025positive,
  title={What is positive geometry?},
  author={Ranestad, Kristian and Sturmfels, Bernd and Telen, Simon},
  journal={Le Matematiche},
  volume = {80},
  number = {1},
  pages = {3--16},
  year={2025}
}

@article{baker2007riemann,
  title={Riemann--{R}och and {A}bel--{J}acobi theory on a finite graph},
  author={Baker, Matthew and Norine, Serguei},
  journal={Advances in Mathematics},
  volume={215},
  number={2},
  pages={766--788},
  year={2007},
  publisher={Elsevier}
}

@article{krichever1986spectral,
  title={Spectral theory of finite-zone nonstationary {S}chr{\"o}dinger operators. A nonstationary {P}eierls model},
  author={Krichever, Igor Moiseevich},
  journal={Functional Analysis and Its Applications},
  volume={20},
  number={3},
  pages={203--214},
  year={1986},
  publisher={Kluwer Academic Publishers-Plenum Publishers New York}
}

@incollection{krichever2002toda,
  author       = {Krichever, I. M. and Vaninsky, K. L.},
  title        = {The periodic and open {T}oda lattice},
  booktitle    = {AMS/IP Studies in Advanced Mathematics},
  volume       = {33},
  pages        = {139--158},
  publisher    = {American Mathematical Society},
  address      = {Providence, RI},
  year         = {2002},
}

@book{oxley2006matroid,
  title={Matroid theory},
  author={Oxley, James G},
  volume={3},
  year={2006},
  publisher={Oxford University Press, USA}
}

@article{amini2010lattice,
  title={Lattice of integer flows and poset of strongly connected orientations},
  author={Amini, Omid},
  journal={arXiv:1007.2456},
  year={2010}
}

@article{amini2023tropical,
  title={Tropical {We}ierstrass points and {W}eierstrass weights},
  author={Amini, Omid and Gierczak, Lucas and Richman, Harry},
  journal={arXiv:2303.07729},
  year={2023}
}

@article{haase2012linear,
  title={Linear systems on tropical curves},
  author={Haase, Christian and Musiker, Gregg and Yu, Josephine},
  journal={Mathematische Zeitschrift},
  volume={270},
  number={3},
  pages={1111--1140},
  year={2012},
  publisher={Springer}
}

@article{Abenda2022,
title = {Real regular {KP} divisors on {M}-curves and totally non-negative {G}rassmannians},
journal = {Letters in Mathematical Physics},
volume = {112},
number = {115},
year = {2022},
author = {Simonetta Abenda and Petr G. Grinevich},
}

@article{Abenda2018,
title = {Rational Degenerations of ${M}$-Curves, Totally Positive {G}rassmannians and {KP2}-Solitons},
journal = {Communications in Mathematical Physics},
volume = {361},
year = {2018},
pages = {1029--1081},
author = {Simonetta Abenda and Petr G. Grinevich},
}

@article{Abenda2019,
title = {Reducible {M}-curves for {L}e-networks in the totally-nonnegative {G}rassmannian and {KP-II} multiline solitons},
journal = {Selecta Mathematica},
volume = {25},
number = {43},
year = {2019},
author = {Simonetta Abenda and Petr G. Grinevich},
}

@article {Abenda2017,
    AUTHOR = {Abenda, Simonetta},
     TITLE = {On a family of {KP} multi-line solitons associated to rational
              degenerations of real hyperelliptic curves and to the finite
              non-periodic {T}oda hierarchy},
   JOURNAL = {Journal of Geometry and Physics},
  FJOURNAL = {Journal of Geometry and Physics},
    VOLUME = {119},
      YEAR = {2017},
     PAGES = {112--138},
      ISSN = {0393-0440,1879-1662},
   MRCLASS = {37K40 (14H50 14H70 37K20)},
  MRNUMBER = {3661527},
MRREVIEWER = {Armando\ K.\ Treibich},
       DOI = {10.1016/j.geomphys.2017.04.005},
       URL = {https://doi.org/10.1016/j.geomphys.2017.04.005},
}

@article{FevMan,
title = {Hirota varieties and rational nodal curves},
journal = {Journal of Symbolic Computation},
volume = {120},
pages = {102239},
year = {2024},
issn = {0747-7171},
doi = {https://doi.org/10.1016/j.jsc.2023.102239},
url = {https://www.sciencedirect.com/science/article/pii/S0747717123000536},
author = {Claudia Fevola and Yelena Mandelshtam}
}

@article{AgoFevManStu,
title = {{KP} solitons from tropical limits},
journal = {Journal of Symbolic Computation},
volume = {114},
pages = {282-301},
year = {2023},
issn = {0747-7171},
doi = {https://doi.org/10.1016/j.jsc.2022.04.009},
url = {https://www.sciencedirect.com/science/article/pii/S0747717122000293},
author = {Daniele Agostini and Claudia Fevola and Yelena Mandelshtam and Bernd Sturmfels},
keywords = {KP equation, Soliton solutions, Theta function, Delaunay polytope, Sato Grassmannian, Riemann-Roch space},
abstract = {We study solutions to the Kadomtsev-Petviashvili equation whose underlying algebraic curves undergo tropical degenerations. Riemann's theta function becomes a finite exponential sum that is supported on a Delaunay polytope. We introduce the Hirota variety which parametrizes all tau functions arising from such a sum. We compute tau functions from points on the Sato Grassmannian that represent Riemann-Roch spaces and we present an algorithm that finds a soliton solution from a rational nodal curve.}
}

@article{AgoCelStrStu,
author = {Daniele Agostini and T\"urk\"u \"Ozl\"um \c{C}elik and Julia Struwe and Bernd Sturmfels},
title = {Theta Surfaces},
journal = {Vietnam Journal of Mathematics},
volume = {49},
pages = {319-347},
year = {2021}
}

@article{Agostini_2021,
doi = {10.1088/1361-6544/abf08c},
url = {https://doi.org/10.1088/1361-6544/abf08c},
year = {2021},
month = {jun},
publisher = {IOP Publishing},
volume = {34},
number = {6},
pages = {3783},
author = {Daniele Agostini and T\"urk\"u \"Ozl\"um \c{C}elik and Bernd Sturmfels},
title = {The {D}ubrovin threefold of an algebraic curve},
journal = {Nonlinearity},
abstract = {The solutions to the Kadomtsev–Petviashvili equation that arise from a fixed complex algebraic curve are parametrized by a threefold in a weighted projective space, which we name after Boris Dubrovin. Current methods from nonlinear algebra are applied to study parametrizations and defining ideals of Dubrovin threefolds. We highlight the dichotomy between transcendental representations and exact algebraic computations. Our main result on the algebraic side is a toric degeneration of the Dubrovin threefold into the product of the underlying canonical curve and a weighted projective plane.}
}

@article{dubrovin,
doi = {10.1070/RM1981v036n02ABEH002596},
url = {https://dx.doi.org/10.1070/RM1981v036n02ABEH002596},
year = {1981},
publisher = {},
volume = {36},
number = {2},
pages = {11},
author = {B. A.~Dubrovin},
title = {Theta functions and nonlinear equations},
journal = {Russian Mathematical Surveys},
}

@article{Artamkin2004,
  author    = {I. V. Artamkin},
  title     = {Canonical maps of pointed nodal curves},
  journal   = {Sbornik: Mathematics},
  year      = {2004},
  volume    = {195},
  number    = {5},
  pages     = {615--642},
  doi       = {10.1070/SM2004v195n05ABEH000818},
}

@article{KW2013,
    author = {Yuji Kodama and Lauren Williams} ,
    title = {The {D}eodhar decomposition of the {G}rassmannian and the regularity of {KP} solitons},
    journal = {Advances in Mathematics},
    volume = {244},
    pages = {979-1032},
    year = {2013}
}

@article{KW2014,
    author = {Yuji Kodama and Lauren Williams} ,
    title = {{KP} solitons and total positivity for the {G}rassmannian} ,
    journal = {Inventiones Mathematicae},
    volume = {198},
    pages = {637-699},
    year = {2014}
}

@unpublished{Pos,
    author = {Alexander Postnikov},
    title = {Total positivity, {G}rassmannians, and networks},
    note = {arXiv:math/0609764},
    year = {2006}
}

@article{Krich1976,
    author = {Igor M. Krichever},
    title = {An algebraic-geometric construction of the {Z}akharov-{S}habat equations and their periodic solutions},
    journal = {Doklady Akademii Nauk SSSR},
    volume = {227:2},
    pages = {291-294},
    year = {1976}
}

@article{Krich1977,
    author = {Igor M. Krichever},
    title = {Integration of nonlinear equations by the methods of algebraic geometry},
    journal = {Functional Analysis and Applications},
    year = {1977},
    volume = {11},
    pages = {12-26}
}

@article{Krichdeg,
    author = {Igor M. Krichever},
    title = {Spectral theory of finite-zone nonstationary {S}chroedinger operators. {A} nonstationary {P}eierls model},
    journal = {Functional Analysis and Its Applications},
    volume = {20},
    number = {3},
    pages ={203-214},
    year = {1986}
}

@article{DubNat,
    author = {Boris A. Dubrovin and Sergei M. Natanzon},
    title = {Real theta-function solutions of the {K}adomtsev-{P}etviashvili equation.},
    journal = {Mathematics of the USSR-Izvestiya},
    volume = {52},
    pages = {267-286},
    year = {1988}
}

@article{Matveev,
    author = {Vladimir B. Matveev},
    title = {Some  comments on the rational solutions of the {Z}akharov-{S}chabat equations},
    journal = {Letters in Mathematical Physics},
    volume = {3},
    pages = {503-512},
    year = {1979}
}

@article{FreemanNimmo,
    author = {N.C. Freeman and J.J.C. Nimmo},
    title = {Soliton solutions of the {K}orteweg de {V}ries and the {K}adomtsev-{P}etviashvili equations: the {W}ronskian technique},
    journal = {Proceedings of the Royal Society of London A. Mathematical and Physical Sciences},
    pages = {319-329},
    year = {1983}
}

@article{CK,
    author = {Sarbarish Chakravarthy and Yugi Kodama},
    title = {Soliton solutions of the {KP} equation and application to shallow water waves},
    journal = {Studies in Applied Mathematics},
    volume = {123},
    pages = {83-151},
    year = {2009}
}

@article{nakdegeneration,
  title={On reducible degeneration of hyperelliptic curves and soliton solutions},
  author={Nakayashiki, Atsushi},
  journal={SIGMA. Symmetry, Integrability and Geometry: Methods and Applications},
  volume={15},
  pages={009},
  year={2019},
  publisher={SIGMA. Symmetry, Integrability and Geometry: Methods and Applications}
}

@article{nakimrn,
  title={Tau function approach to theta functions},
  author={Nakayashiki, Atsushi},
  journal={International Mathematics Research Notices},
  volume={2016},
  number={17},
  pages={5202--5248},
  year={2016},
  publisher={Oxford University Press}
}

@article{kummer2024maximal,
  title={{M}aximal {M}umford curves from planar graphs},
  author={Kummer, Mario and Sturmfels, Bernd and Vlad, Raluca},
  journal={Pure and Applied Mathematics Quarterly},
  volume={21},
  pages={1689-1719},
  year={2025}
}

@article{oda1979compactifications,
  title={Compactifications of the generalized {J}acobian variety},
  author={Oda, Tadao and Seshadri, Conjeerveram S},
  journal={Transactions of the American Mathematical Society},
  volume={253},
  pages={1--90},
  year={1979}
}

@incollection {Lam2024,
    AUTHOR = {Lam, Thomas},
     TITLE = {An invitation to positive geometries},
 BOOKTITLE = {Open problems in algebraic combinatorics},
    SERIES = {Proc. Sympos. Pure Math.},
    VOLUME = {110},
     PAGES = {159--179},
 PUBLISHER = {Amer. Math. Soc., Providence, RI},
      YEAR = {[2024] \copyright 2024},
      ISBN = {[9781470473334]; [9781470477974]},
   MRCLASS = {14P10 (05E14)},
  MRNUMBER = {4780729},
}

@article {Lam2025,
    AUTHOR = {Lam, Thomas},
     TITLE = {Moduli spaces in positive geometry},
   JOURNAL = {Matematiche (Catania)},
  FJOURNAL = {Le Matematiche},
    VOLUME = {80},
      YEAR = {2025},
    NUMBER = {1},
     PAGES = {17--101},
      ISSN = {0373-3505,2037-5298},
   MRCLASS = {14H10 (14H81)},
  MRNUMBER = {4915914},
}

@misc{AbendaGrinevich2025DN,
  title   = {Dubrovin--{N}atanzon divisors on {MM}-curves},
  author  = {Abenda, Simonetta and Grinevich, Petr G.},
  year    = {2025},
  eprint  = {2508.08426},
  archivePrefix = {arXiv}
}

@article{brannetti2011tropical,
  title={On the tropical {T}orelli map},
  author={Brannetti, Silvia and Melo, Margarida and Viviani, Filippo},
  journal={Advances in Mathematics},
  volume={226},
  number={3},
  pages={2546--2586},
  year={2011},
  publisher={Elsevier}
}

@article{chan2012combinatorics,
  title={Combinatorics of the tropical {T}orelli map},
  author={Chan, Melody},
  journal={Algebra \& Number Theory},
  volume={6},
  number={6},
  pages={1133--1169},
  year={2012},
  publisher={Mathematical Sciences Publishers}
}

@book{ziegler2012lectures,
  title={Lectures on polytopes},
  author={Ziegler, G{\"u}nter M},
  volume={152},
  year={2012},
  publisher={Springer Science \& Business Media}
}

@article{chua2019schottky,
  title={Schottky Algorithms: classical meets Tropical},
  author={Chua, Lynn and Kummer, Mario and Sturmfels, Bernd},
  journal={Mathematics of Computation},
  volume={88},
  number={318},
  pages={1241--1265},
  year={2019},
  doi={10.1090/mcom/3372}
}

@article{bolognese2017curves,
  title={From curves to tropical {J}acobians and back},
  author={Bolognese, Barbara and Brandt, Madeline and Chua, Lynn},
  journal={Combinatorial Algebraic Geometry: Selected Papers From the 2016 Apprenticeship Program},
  pages={21--45},
  year={2017},
  publisher={Springer}
}

@phdthesis{vallentin2003sphere,
  title={Sphere covering, lattices, and tilings (in low dimensions)},
  author={Vallentin, Frank},
  year={2003},
  school={Technische Universit{\"a}t M{\"u}nchen}
}

@unpublished{froman,
  author       = {Frost, Hadleigh and Mandelshtam, Yelena},
  title        = {Tropical theta functions and normal fans of {V}oronoi cells},
  note         = {In preparation},
  year         = {2026}
}

@article{Shiota1986Schottky,
  author  = {Shiota, Tetsuji},
  title   = {Characterization of Jacobian varieties in terms of soliton equations},
  journal = {Inventiones mathematicae},
  volume  = {83},
  year    = {1986},
  pages   = {333--382}
}
\bibliographystyle{abbrv}

\noindent \footnotesize{Simonetta Abenda, Università di Bologna and INFN, Sezione di Bologna
\hfill  {\tt simonetta.abenda@unibo.it}

\noindent T\"urk\"u \"Ozl\"um \c{C}elik, MPI of Molecular Cell Biology and Genetics \hfill  {\tt celik@mpi-cbg.de} \\ \& Center for Systems Biology Dresden

\noindent Claudia Fevola, INRIA Saclay, Université Paris-Saclay
\hfill  {\tt claudia.fevola@inria.fr}

\noindent Yelena Mandelshtam, University of Michigan
\hfill  {\tt yelenam@umich.edu}}

\end{document}